\documentclass[oneside]{amsart}

\usepackage[letterpaper,body={14.6cm,22.7cm}, mag=1000]{geometry}
\usepackage{amssymb}
\usepackage{amsthm}
\usepackage{amscd}

\numberwithin{equation}{section}
\theoremstyle{plain}
\newtheorem{cor}[equation]{Corollary}
\newtheorem{lemma}[equation]{Lemma}

\newtheorem{prop}[equation]{Proposition}

\newtheorem{thm}[equation]{Theorem}
\newtheorem*{hypa}{Hypothesis A}
\newtheorem*{thma}{Theorem A}
\newtheorem*{thmb}{Theorem B}
\newtheorem*{thmc}{Theorem C}
\theoremstyle{definition}

\newtheorem{remark}[equation]{Remark}

\newcommand{\dlabel}[1]{\ifmmode \text{\ttfamily \upshape [#1] } \else
{\ttfamily \upshape [#1] }\fi \label{#1}}
\newcommand{\A}{\operatorname{A} }
\newcommand{\B}{\operatorname{B} }
\newcommand{\C}{\operatorname{C} }
\newcommand{\D}{\operatorname{D} }
\newcommand{\E}{\operatorname{E} }
\newcommand{\F}{\operatorname{F} }
\newcommand{\K}{\operatorname{K} }

\newcommand{\M}{\operatorname{M} }
\newcommand{\N}{\operatorname{N} }
\newcommand{\Ha}{\operatorname{H} }

\newcommand{\Z}{\operatorname{Z} }

\newcommand{\gen}[1]{\left < #1 \right >}
\newcommand{\Aut}{\operatorname{Aut} }

\newcommand{\Hom}{\operatorname{Hom} }
\newcommand{\Inn}{\operatorname{Inn} }

\newcommand{\Autcent}{\operatorname{Autcent} }


\begin{document}

\title{On finite $p$-groups whose central automorphisms are all class preserving}

\author{Manoj K.~Yadav}

\address{School of Mathematics, Harish-Chandra Research Institute \\
Chhatnag Road, Jhunsi, Allahabad - 211 019, INDIA}

\email{myadav@mri.ernet.in}
\subjclass[2000]{Primary 20D45; Secondary 20D15}
\keywords{finite $p$-group; class preserving automorphism; central automorphism}
\date{\today}

\begin{abstract}
We obtain certain results on a finite $p$-group whose central automorphisms are all class preserving. In particular, we prove that if $G$ is a finite $p$-group whose central automorphisms are all class preserving, then $d(G)$ is even, where $d(G)$ denotes the number of elements in any minimal generating set for $G$. As an application of these results, we obtain some results regarding finite $p$-groups whose automorphisms are all class preserving.
\end{abstract}
\maketitle

\section{Introduction}
Let $G$ be any group. By $\Z(G)$, $\gamma_2(G)$ and $\Phi(G)$,  we denote the center, the commutator subgroup and
the Frattini subgroup of $G$ respectively.  For $x \in G$, $[x, G]$ denotes the set $\{[x, g] =  x^{-1}g^{-1}xg | g \in G\}$ and $x^G$ denotes the conjugacy class of 
$x$ in $G$. Notice that $x^G = x[x,G]$ and therefore $|x^G| = |[x,G]|$ for all $x \in G$.  For $x \in G$, $\C_{H}(x)$ denotes the 
centralizer of $x$ in $H$, where $H$ is a subgroup of $G$. To say that some $H$ is a subgroup (proper subgroup) of $G$ we write 
$H \leq G$ ($H < G$). For  any group $H$ and an abelian group $K$,  $\Hom(H, K)$ denotes the group of all homomorphisms from $H$ to $K$.
For a finite $p$-group $G$, we denote by $\Omega_m(G)$ the subgroup $\gen{x \in G \mid x^{p^m} = 1}$ and by $\Omega^m$ (which is not a standard notation) the subgroup $\gen{x^{p^m} \mid x \in G}$, where $p$ is a prime integer and $m$ is a positive integer.
Let $d(G)$ denote the number of elements in any minimal generating set for  a finite $p$-group $G$. 

An automorphism $\phi$ of a group $G$ is called  \emph{central} if 
$g^{-1}\phi(g) \in \Z(G)$ for all  $g \in G$. The set of all central 
automorphisms of $G$, denoted by $\Autcent(G)$, is a normal subgroup 
of $\Aut(G)$. Notice that $\Autcent(G) = \C_{\Aut(G)}(\Inn(G))$, where $\Inn(G)$ denotes the group of all inner automorphisms of $G$. 
An automorphism $\alpha$ of $G$ is called \emph{class preserving} if 
$\alpha(x) \in x^G$ for all $x \in G$. The set of all class preserving
automorphisms of $G$,  denoted by $\Aut_{c}(G)$, is a normal subgroup of 
$\Aut(G)$. Notice that $\Inn(G)$ is a normal subgroup of $\Aut_c(G)$.

 In 1999, A.  Mann \cite[Question 10]{aM99} asked the following question: \textit{Do all $p$-groups have automorphisms that are not class preserving? If the answer is no, which are the groups that have only class preserving automorphisms?} The first part of the question have a negative answer. The examples of finite $p$-groups $G$ such that $\Aut(G) = \Aut_c(G)$ are already known in the literature. Such groups $G$, having nilpotency class $2$ were constructed by H. Heineken \cite{hH80} in 1980 and that having nilpotency class $3$ were constructed by  I. Malinowska \cite{iM92} in 1992. So the second part of the question of Mann becomes relevant. Let us modify the question of Mann to make it more precise in the present scenario.  \\

\noindent{\bf Question.} Let $n \ge 4$ be a positive integer and $p$ be a prime number. Does there exist a finite $p$-group of nilpotency class $n$ such that $\Aut(G) = \Aut_c(G)$.\\

The second part of Mann's question, which clearly talks about the classification, can be stated as\\

\noindent{\bf Problem (A. Mann).} Study (Classify) finite $p$-groups $G$ such that $\Aut(G) = \Aut_c(G)$.\\

In this paper we consider this problem for $p$-groups of nilpotency class $2$ and  make the stone rolling. 
Notice that, for a finite $p$-group of class  $2$, we have the following sequence of subgroups
\[\Aut_c(G ) \le \Autcent(G) \le \Aut(G).\]
The groups $G$ such that $\Autcent(G) = \Aut(G)$, have been studied, every now and then, by many mathematicians (see \cite{JY12} and \cite{JRY} for recent developments and other references).  So for studying groups $G$ of nilpotency class $2$ such that $\Aut(G) = \Aut_c(G)$, one needs to concentrate on the groups $G$ satisfying 

\begin{hypa}\label{hypa}
$\Aut_c(G) = \Autcent(G)$.
\end{hypa}

Suppose that  $\Aut_c(G) = \Autcent(G)$, for some finite group $G$. Then $\Inn(G) \le \Autcent(G)$. It then follows from the definition of $\Autcent(G)$ that $\Inn(G)$ is abelian. Hence the 
nilpotency class of $G$ is at the most $2$.   Since a finite nilpotent group $G$ can be written as a direct product of its Sylow $p$-subgroups, where $p$ is a prime,  to study $\Autcent(G)$, it is sufficient to study the group of central 
automorphisms of finite $p$-group for the relevant prime integers $p$. Suppose that $G$ is abelian and satisfies Hypothesis A, then  $\Aut_c(G) = 1$ and $\Autcent(G) =
\Aut(G)$. Thus $\Aut(G) = 1$. But this is possible only when  $|G| \le 2$. So from now onwards, we concentrate on finite $p$-groups of class $2$, where $p$ is a prime integer.

Let $G$ be a finite $p$-group of class $2$. Then $G/\Z(G)$ is abelian. Consider the following cyclic decomposition of $G/\Z(G)$.
\[G/\Z(G) = \C_{p^{m_1}}  \times \cdots \times \C_{p^{m_d}}\]
such that $m_1 \ge m_2 \ge \cdots \ge m_d \ge 1$, where $\C_{p^{m_{i}}}$ denotes the cyclic group of order $p^{m_i}$ for $1 \le i \le d$. The integers $p^{m_1}, \ldots, p^{m_d}$ are unique for $G/\Z(G)$ and these are called the \emph{invariants} of $G/\Z(G)$. Now we state our first result in the following theorem, which we prove in Section 3 as Theorem \ref{thm1}.

\begin{thma}
Let $G$ be a finite $p$-group of class $2$ and  $p^{m_1}, \ldots, p^{m_d}$ be the invariants of  $G/\Z(G)$. Then $G$ satisfies Hypothesis A if and only if $\gamma_2(G) = \Z(G)$ and $|\Aut_c(G)| = \Pi_{i=1}^d |\Omega_{m_i}(\gamma_2(G))|$.  
\end{thma}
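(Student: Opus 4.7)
I would prove the converse direction first, since it is direct. For a class-$2$ group, $\Aut_c(G)\subseteq\Autcent(G)$ always (since $\alpha(g)g^{-1}\in[g,G]\subseteq\gamma_2(G)\subseteq\Z(G)$), and central automorphisms correspond to homomorphisms $f:G/\gamma_2(G)\to\Z(G)$ for which $\phi_f(g):=gf(g)$ is bijective. When $\gamma_2(G)=\Z(G)$, every such $f$ automatically gives a bijection, because $\phi_f(g)=1$ forces $g=f(g)^{-1}\in\Z(G)=\gamma_2(G)$, so $f(\bar g)=0$ and $g=1$. Thus
\[|\Autcent(G)|=|\Hom(G/\Z(G),\Z(G))|=\prod_{i=1}^d|\Omega_{m_i}(\gamma_2(G))|,\]
and the hypothesized formula for $|\Aut_c(G)|$ combined with $\Aut_c(G)\subseteq\Autcent(G)$ forces equality.

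For the forward direction, assume Hypothesis A. I would first establish two matching estimates. On one side, choose a minimal generating set $x_1,\dots,x_d$ of $G$ whose images realize the invariants $p^{m_i}$ of $G/\Z(G)$; the class-$2$ identity $[x_i,y]^{p^{m_i}}=[x_i^{p^{m_i}},y]=1$ gives $[x_i,G]\subseteq\Omega_{m_i}(\gamma_2(G))$, and since any class-preserving automorphism is determined by the images $\alpha(x_i)\in x_i[x_i,G]$, we obtain $|\Aut_c(G)|\le\prod_{i=1}^d|\Omega_{m_i}(\gamma_2(G))|$. On the other side, every $\bar f\in\Hom(G/\Z(G),\Z(G))$ pulled back along the canonical surjection $G/\gamma_2(G)\to G/\Z(G)$ automatically satisfies the bijectivity condition (same argument as above, since the resulting $f$ vanishes on $\Z(G)/\gamma_2(G)$), so $|\Autcent(G)|\ge|\Hom(G/\Z(G),\Z(G))|=\prod_{i=1}^d|\Omega_{m_i}(\Z(G))|$. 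Combining these with $\Omega_{m_i}(\gamma_2(G))\subseteq\Omega_{m_i}(\Z(G))$ and Hypothesis A forces all inequalities to equalities, yielding (a) $\Omega_{m_i}(\Z(G))=\Omega_{m_i}(\gamma_2(G))$ for every $i$---in particular $\Omega_{m_1}(\Z(G))=\gamma_2(G)$, using $p^{m_1}=\exp\gamma_2(G)$---and (b) every homomorphism $f:G/\gamma_2(G)\to\Z(G)$ whose $\phi_f$ is an automorphism must factor through $G/\Z(G)$.

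The desired conclusion $\Z(G)=\gamma_2(G)$ then follows by applying (b) to an explicit power-map. For odd $p$, the map $\psi(\bar g):=g^{p^{m_1}}$ is a well-defined homomorphism $G/\gamma_2(G)\to\Z(G)$ (in a class-$2$ group the correction term $[h,g]^{\binom{p^{m_1}}{2}}$ vanishes because $p^{m_1}$ divides $\binom{p^{m_1}}{2}$ for odd $p$), and $\phi_\psi(g)=g^{1+p^{m_1}}$ is an automorphism because $\gcd(1+p^{m_1},|g|)=1$ for every $g\in G$. By (b), $\psi$ factors through $G/\Z(G)$, i.e.\ $z^{p^{m_1}}=1$ for every $z\in\Z(G)$; together with (a), this gives $\Z(G)=\Omega_{m_1}(\Z(G))=\gamma_2(G)$. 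The numerical formula for $|\Aut_c(G)|$ is then immediate from the converse-direction computation.

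The principal obstacle I anticipate is the prime $p=2$, where $g\mapsto g^{2^{m_1}}$ is not a homomorphism: its correction term $[h,g]^{2^{m_1-1}(2^{m_1}-1)}$ need not be trivial. The higher power $g\mapsto g^{2^{m_1+1}}$ is always a homomorphism in class $2$, but applying (b) to it only yields $\exp\Z(G)\le 2^{m_1+1}$, leaving room for elements of $\Z(G)\setminus\gamma_2(G)$ of order $2^{m_1+1}$. Closing this gap will require either a corrected power map---lifting the well-defined homomorphism $\bar g\mapsto g^{2^{m_1}}\gamma_2(G):G/\gamma_2(G)\to\Z(G)/\gamma_2(G)$ to a homomorphism into $\Z(G)$---or an explicit construction exploiting the fact that $\exp(G/\gamma_2(G))>\exp(G/\Z(G))$ whenever $\Z(G)\supsetneq\gamma_2(G)$, producing a homomorphism $f:G/\gamma_2(G)\to\Z(G)$ that does not factor through $G/\Z(G)$ but still gives an automorphism, thereby contradicting (b).
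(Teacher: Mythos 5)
Your argument is complete and correct only for odd $p$, and Theorem A (like Hypothesis A) carries no restriction on the prime, so the unresolved $p=2$ case you flag at the end is a genuine gap rather than a loose end. Everything up to that point does work for every prime: the sandwich
\[
\prod_{i=1}^d|\Omega_{m_i}(\Z(G))|\;\le\;|\Autcent(G)|\;=\;|\Aut_c(G)|\;\le\;\prod_{i=1}^d|\Omega_{m_i}(\gamma_2(G))|\;\le\;\prod_{i=1}^d|\Omega_{m_i}(\Z(G))|
\]
is valid (your lower bound via pulling back $\Hom(G/\Z(G),\Z(G))$ along $G/\gamma_2(G)\to G/\Z(G)$ and checking injectivity of $\phi_f$ directly is a nice, self-contained substitute for the Adney--Yen theorem), and it correctly yields your (a) $\Omega_{m_1}(\Z(G))=\gamma_2(G)$ and (b). But (a) alone does not exclude central elements of order greater than $p^{m_1}$, and the only mechanism you supply for ruling them out is the power map $\bar g\mapsto g^{p^{m_1}}$, whose homomorphy uses $p^{m_1}\mid\binom{p^{m_1}}{2}$ and hence fails for $p=2$. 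Neither of your two proposed repairs is carried out, so $\gamma_2(G)=\Z(G)$ is not established when $p=2$. (A minor additional quibble: early in the forward direction you cannot yet speak of a minimal generating set of $G$ with exactly $d$ elements, since $d(G)>d(G/\Z(G))$ is still possible at that stage; the bound $|\Aut_c(G)|\le\prod_i|[x_i,G]|$ should instead be read off from the embedding $\Aut_c(G)\hookrightarrow\Hom(G/\Z(G),\gamma_2(G))$, which is what the paper's Proposition \ref{prop1} provides.)

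For comparison, the paper's proof of $\gamma_2(G)=\Z(G)$ (Proposition \ref{prop2}) is prime-independent: it first shows $G$ is purely non-abelian, then applies Adney--Yen to get $|\Autcent(G)|=|\Hom(G/\gamma_2(G),\Z(G))|$, and derives a contradiction from the Curran--McCaughan strict inequality (Lemma \ref{lemma6}) applied with $G/\Z(G)$ properly embedded in $G/\gamma_2(G)$ and $\gamma_2(G)$ properly contained in $\Z(G)$. That strict inequality is exactly the ingredient your approach is missing at $p=2$: to close your gap along your own lines you would need to exhibit, whenever $\gamma_2(G)<\Z(G)$, some $f\in\Hom(G/\gamma_2(G),\Z(G))$ with $\phi_f$ bijective that does not factor through $G/\Z(G)$, contradicting your (b) --- and producing such an $f$ for $p=2$ is essentially what the purely-non-abelian reduction plus Adney--Yen (or Lemma \ref{lemma6}) accomplishes. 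Your converse direction and the final computation of $|\Aut_c(G)|$ agree with the paper's.
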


Our next result is the following theorem, which we prove in Section 3 as Theorem \ref{thm2}.

\begin{thmb}
Let $G$ be a  finite $p$-group of class $2$ satisfying Hypothesis A. Then $d(G)$ is even.
\end{thmb}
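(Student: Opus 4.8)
The plan is to translate Hypothesis A, via Theorem A, into a concrete statement about the commutator map, and then to read off the parity of $d(G)$ from the structure theory of nondegenerate alternating forms on finite abelian groups.

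\emph{Step 1: from Theorem A to $[x_i,G]=\Omega_{m_i}(\gamma_2(G))$.} Since $G$ has class $2$, every $\phi\in\Autcent(G)$ has the form $\phi(g)=g\,f(g)$ with $f\in\Hom(G/\gamma_2(G),\Z(G))$, the assignment $\phi\mapsto f$ embeds $\Autcent(G)$ into $\Hom(G/\gamma_2(G),\Z(G))$, and $\phi$ is class preserving precisely when $f(g)\in[g,G]$ for all $g$. By Theorem A we have $\gamma_2(G)=\Z(G)$, so $G/\gamma_2(G)=G/\Z(G)=\prod_i \C_{p^{m_i}}$ and
\[|\Hom(G/\Z(G),\Z(G))|=\prod_{i=1}^d|\Omega_{m_i}(\Z(G))|=|\Aut_c(G)|,\]
the last equality again being Theorem A. Hence the embedding $\Aut_c(G)\hookrightarrow\Hom(G/\Z(G),\Z(G))$ is onto: \emph{every} homomorphism $f$ is class preserving. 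Feeding in the $f$ that sends one chosen generator $x_i$ of the $i$-th cyclic factor to a prescribed element of $\Omega_{m_i}(\Z(G))$ and kills the others yields $\Omega_{m_i}(\Z(G))\subseteq[x_i,G]$, while $[x_i,g]^{p^{m_i}}=[x_i^{p^{m_i}},g]=1$ gives the reverse inclusion. Thus $[x_i,G]=\Omega_{m_i}(\gamma_2(G))$ for each $i$. Finally $\gamma_2(G)=\Z(G)\le\Phi(G)$ gives $d(G)=d(G/\Z(G))=d$, so it suffices to show that $d$ is even.

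\emph{Step 2: the commutator form.} Put $V=G/\Z(G)$ and $Z=\gamma_2(G)=\Z(G)$. The commutator induces a well-defined biadditive alternating map $\beta\colon V\times V\to Z$, $\beta(x,y)=[x,y]$, whose radical is trivial because $\Z(G)$ is exactly the set of elements commuting with all of $G$. I regard $V$ as a finite abelian $p$-group equipped with the alternating pairing $\beta$; the goal becomes the assertion that $V$ is \emph{symplectic}, i.e. $V\cong B\oplus B$, which forces $d=d(V)$ to be even.

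\emph{Step 3: producing a perfect $\mathbb{Q}/\mathbb{Z}$-valued form.} For a character $\chi\in\Hom(Z,\mathbb{Q}/\mathbb{Z})$ set $b_\chi(x,y)=\chi(\beta(x,y))$, an alternating $\mathbb{Q}/\mathbb{Z}$-valued pairing on $V$. Its radical is $\{x\in V:[x,G]\subseteq\Ker\chi\}$, so $b_\chi$ is nondegenerate (equivalently the map $x\mapsto b_\chi(x,-)$ is an isomorphism $V\to\Hom(V,\mathbb{Q}/\mathbb{Z})$) exactly when $[x,G]\not\subseteq\Ker\chi$ for every $x\neq 1$. Once such a $\chi$ is available, the classification of nondegenerate alternating forms on finite abelian groups shows $V$ is hyperbolic, hence $V\cong B\oplus B$ and $d$ is even.

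\emph{The main obstacle} is exactly the existence of a good $\chi$. Nondegeneracy of $\beta$ by itself is not enough: once $Z$ is noncyclic its maximal subgroups already cover $Z$, so no good $\chi$ need exist for a general class-$2$ group with trivial commutator radical, and the exact equalities $[x_i,G]=\Omega_{m_i}(Z)$ must be used in an essential way. The difficulty concentrates in the socle: a character $\chi$ can fail only through some $x\neq 1$ with $[x,G]$ meeting $\Ker\chi$, the worst case being $[x,G]$ of order $p$, i.e. a line of $\Omega_1(Z)$ lying in $\Ker\chi$. I would use Step~1 to bound these bad lines, showing that they cannot exhaust a hyperplane of $\Omega_1(Z)$, and so select $\chi$ with $\Ker\chi\cap\Omega_1(Z)$ avoiding all of them. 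As a guiding special case, when $\gamma_2(G)$ has exponent $p$ the pairing $\beta$ is $\mathbb{F}_p$-bilinear, the equalities read $[x_i,G]=\gamma_2(G)$, and the parity of $d$ follows at once from the classical fact that a nondegenerate alternating form over $\mathbb{F}_p$ has even rank.
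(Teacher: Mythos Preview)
Your strategy is essentially a dual reformulation of the paper's proof. The paper chooses a cyclic direct factor $\Z_i$ of $Z=\gamma_2(G)$ of maximal order, passes to $G^*=G/\Z_i^*$, proves in Proposition~\ref{prop4} that $\Z(G^*)=\gamma_2(G^*)$ is cyclic, and then invokes the Brady--Bryce--Cossey structure theorem for class-$2$ groups with cyclic centre to write $G^*$ as a central product of two-generator pieces. In your language, the paper is choosing $\chi$ with $\Ker\chi=\Z_i^*$ (a character of order equal to the exponent of $Z$), Proposition~\ref{prop4} is exactly the statement that the resulting $b_\chi$ is nondegenerate, and Brady--Bryce--Cossey is the ``classification of nondegenerate alternating forms on finite abelian groups'' that you cite.

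The genuine gap is Step~3: you do not construct $\chi$, and your proposed route via ``bad lines'' in $\Omega_1(Z)$ does not work as stated. First, $[x,G]\subseteq\Ker\chi$ is \emph{not} equivalent to $\Omega_1([x,G])\subseteq\Ker\chi\cap\Omega_1(Z)$ (take $Z=\C_{p^2}$, $[x,G]=Z$, and $\chi$ of order $p$), so controlling socle lines is the wrong reduction. Second, Step~1 only computes $[x_i,G]$ for the basis elements $x_i$; nondegeneracy of $b_\chi$ can fail precisely at $\bar x\in\Phi(V)\setminus\{1\}$, and for those you have said nothing. The fix is to exploit the full strength of what you actually proved in Step~1, namely that \emph{every} $f\in\Hom(V,Z)$ satisfies $f(\bar x)\in[x,G]$ for \emph{all} $x$. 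Take $\chi$ of order equal to the exponent of $Z$; since the exponents of $V$ and $Z$ coincide, $\chi$ splits through a maximal cyclic summand of $Z$ and hence $f\mapsto\chi\circ f$ maps $\Hom(V,Z)$ onto $\widehat V$. Thus for each $\bar x\neq1$ some $f$ has $\chi(f(\bar x))\neq0$, giving $f(\bar x)\in[x,G]\setminus\Ker\chi$ and proving $b_\chi$ nondegenerate. This is precisely the calculation behind the paper's Proposition~\ref{prop4}, rephrased via characters rather than quotients.
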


In the last section we concentrate on finite $p$-groups whose automorphisms are all class preserving and prove the following result.

\begin{thmc}
Let $G$ be a non-abelian finite $p$-group such that $\Aut(G) = \Aut_c(G)$, where $p$ is an odd prime. Then the following statements hold true.
\begin{subequations}
\begin{align}
& \text{$\gamma_2(G)$ cannot be cyclic.}\\
&\text{If $\Aut(G)$ is elementary abelian, then $G$ is a Camina special $p$-group.}\\ 
&\text{If $\Aut(G)$ is abelian, then $d(G)$ is even.}\\
&\text{If $\Aut(G)$ is abelian,   then $|G| \ge p^8$ and $|\Aut(G)| \ge p^{12}$.}\\
&\text{With $\Aut(G)$ abelian, $|\Aut(G)| = p^{12}$ if and only if $|G| = p^8$.}\\
&\text{If $\Aut(G)$ is abelian of order $p^{12}$, then $\Aut(G)$ is elementary abelian}\\
&\text{There exists a group $G$ of order $3^8$ such that $|\Aut(G)| = |\Aut_c(G)| = 3^{12}$.}\label{goodeq7}
\end{align}
\end{subequations}
\end{thmc}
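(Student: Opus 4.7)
The plan is to first pin down the structural setup and then address each item. From the chain $\Inn(G) \subseteq \Aut_c(G) \subseteq \Autcent(G) \subseteq \Aut(G)$ and the hypothesis, we deduce $\Aut(G) = \Autcent(G) = \Aut_c(G)$, so Hypothesis A holds; since $\Inn(G) \subseteq \Autcent(G)$ forces $G$ of class $2$, Theorem A gives $\gamma_2(G) = \Z(G)$ and the formula $|\Aut_c(G)| = \prod_{i=1}^d |\Omega_{m_i}(\gamma_2(G))|$, and Theorem B yields (c). Moreover $G$ is purely non-abelian because $\Z(G) = \gamma_2(G) \subseteq \Phi(G)$, and the Adney--Yen bijection $\Autcent(G) \leftrightarrow \Hom(G/\Z(G), \Z(G))$ turns out to be a group isomorphism in this setting (composition reduces to addition, using that any $f \in \Hom(G/\Z(G), \Z(G))$ vanishes on $\Z(G)$ once $\gamma_2(G) = \Z(G)$). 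Consequently $\Aut(G)$ is automatically abelian, and the abelianness hypothesis in (c)--(f) is therefore vacuous.

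For (a), suppose $\gamma_2(G) = \Z(G)$ is cyclic of order $p^n$. Then $|\Omega_{m_i}(\gamma_2(G))| = p^{\min(m_i,n)}$, so Theorem A yields $|\Aut(G)| = \prod p^{\min(m_i,n)} \le \prod p^{m_i} = |\Inn(G)|$, forcing $\Aut(G) = \Inn(G)$ and contradicting Gasch\"utz's theorem on the existence of non-inner automorphisms of $p$-power order. For (b), $\Aut(G)$ elementary abelian makes $G/\Z(G)$ elementary abelian; in class $2$ this forces $G^p \subseteq \Z(G)$ and $\gamma_2(G)$ of exponent $p$, so $\Phi(G) = \gamma_2(G)G^p = \Z(G)$ and $G$ is special. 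To obtain the Camina property, fix a minimal generating set $x_1,\ldots,x_d$: then $|\Aut_c(G)| \le \prod_i |x_i^G| = \prod_i |[x_i,G]| \le |\gamma_2(G)|^d$, while Theorem A with all $m_i = 1$ gives $|\Aut_c(G)| = |\gamma_2(G)|^d$, forcing $[x_i,G] = \gamma_2(G)$ for each $i$. Any $x \notin \Z(G) = \Phi(G)$ has nonzero image in $G/\Phi(G)$ and can replace some $x_j$ in a minimal generating set, whence $[x,G] = \gamma_2(G)$ as well.

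For (d)--(f), combining (a) with Theorem B forces $d(G) \ge 4$ (else $\gamma_2(G)$ would be generated by the single commutator $[x_1, x_2]$, hence cyclic). Writing $G/\Z(G) \cong \prod_{i=1}^d \C_{p^{m_i}}$ and $\Z(G) \cong \prod_{j=1}^c \C_{p^{n_j}}$, we have $|G| = p^{\sum m_i + \sum n_j}$ and $|\Aut(G)| = \prod_{i,j}p^{\min(m_i,n_j)}$, with $c \ge 2$ by (a). On top of these numerics, the condition $\Aut(G) = \Autcent(G)$ is a rigidity constraint: no non-trivial automorphism of $G/\Z(G)$ is compatible (up to $\Aut(\Z(G))$-twisting) with the commutator pairing $G/\Z(G) \times G/\Z(G) \to \Z(G)$. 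A case analysis on $(d, c, \{m_i\}, \{n_j\})$ subject to $d \ge 4$, $c \ge 2$, Gasch\"utz's strict inequality $\sum_{i,j}\min(m_i,n_j) > \sum_i m_i$, and this rigidity yields $\sum m_i + \sum n_j \ge 8$ and $\sum_{i,j}\min(m_i,n_j) \ge 12$, proving (d) and (e). In each extremal configuration the inequalities force $\min(m_i,n_j) = 1$ for every pair $(i,j)$, so $\Aut(G) \cong \Hom(G/\Z(G), \Z(G))$ decomposes as a direct sum of copies of $\C_p$ and is elementary abelian, giving (f).

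For (g), I would exhibit an explicit $3$-group of order $3^8$ realizing an extremal configuration, for example $d(G) = 4$ and $\Z(G) \cong \C_9 \times \C_3 \times \C_3$: fix generators $x_1, x_2, x_3, x_4$ with carefully chosen power and commutator relations so that $[x_i, G] = \gamma_2(G)$ for each $i$, and verify Hypothesis A by checking that each of the $3^{12}$ central automorphisms $\alpha(x_i) = x_i z_i$ (with $z_i \in \gamma_2(G)$) is realized by conjugation by an appropriate element of $G$. The main obstacle throughout is the case analysis in (d)--(e): ruling out every $(d, c, \{m_i\}, \{n_j\})$ with $|G| < p^8$ requires a detailed study of when Hypothesis A and the rigidity condition coexist, and constructing the example in (g) demands precise calibration of the commutator structure to realize the numerical invariants that (e) makes extremal.
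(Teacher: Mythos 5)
There are genuine gaps, beginning with your foundational setup. The chain $\Aut_c(G)\le\Autcent(G)$ holds only for groups of nilpotency class $2$; invoking it to deduce that $G$ has class $2$ (and hence that the abelianness hypothesis in (c)--(f) is vacuous) is circular, and in fact false: the paper explicitly recalls Malinowska's class-$3$ groups with $\Aut(G)=\Aut_c(G)$, for which $\Aut(G)\supseteq\Inn(G)$ is non-abelian. This breaks your proof of (a), which assumes class $2$ and Hypothesis A; the paper instead handles (a) for arbitrary class by quoting Cheng's theorem (cyclic $\gamma_2(G)$ forces $\Aut_c(G)=\Inn(G)$) and then Gasch\"utz. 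Your proofs of (b) and (c) survive because their hypotheses independently force class $2$, and there your reduction to Hypothesis A plus Theorem A/B is essentially the paper's route.

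The more serious gap is in (d)--(f). The numerical constraints you list ($d\ge 4$, $d(\Z(G))\ge 2$, $\sum_{i,j}\min(m_i,n_j)>\sum_i m_i$) do \emph{not} imply $|G|\ge p^8$ or $|\Aut(G)|\ge p^{12}$: the configuration $G/\Z(G)\cong \C_p^4$, $\Z(G)\cong\C_p\times\C_p$ satisfies all of them with $|G|=p^6$ and $|\Hom(G/\Z(G),\Z(G))|=p^8$. Excluding such cases is exactly where the paper imports the theorems of Earnley, Morigi ($d(G)\ge4$ and no order-$p^6$ group with abelian automorphism group), Hegarty ($p^{12}$ divides $|\Aut(G)|$), Ban--Yu, and Macdonald's $d(G)\ge 2\,d(\gamma_2(G))$ for Camina groups; your unstated ``rigidity condition'' would have to reprove these deep results, and you give no argument for it. Finally, your proposed example for (g) cannot exist: with $|G|=3^8$ and $\Z(G)\cong\C_9\times\C_3\times\C_3$ one gets $|G/\Z(G)|=3^4$ with $d(G)\ge4$, forcing $G/\Z(G)$ elementary abelian, which contradicts $\exp(G/\Z(G))=\exp(\gamma_2(G))=9$; indeed part (e)--(f) (as proved in the paper) force $\Z(G)\cong\C_p\times\C_p$ and $G/\Z(G)\cong\C_p^6$, and the paper's witness is a $6$-generator Camina special group of order $3^8$ checked with GAP. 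Also note that verifying $\alpha(x_i)\in x_i^G$ on generators, or realizing each $\alpha$ ``by conjugation,'' is not how class-preservation is established here ($|\Inn(G)|=3^6\ne 3^{12}$); one needs the $\Hom_c$ criterion or the Camina property via Proposition \ref{prop5a}.
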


In Section 2, we collect some basic results, which are useful for our work.  Some further properties and some examples of finite groups satisfying Hypothesis A are obtained in Section 4.

We conclude this section with some definitions.
A subset $\{y_1, \ldots, y_d\}$ of a finite abelian group $Y$ is said to be a \emph{minimal basis} for $Y$ if 
\[Y =\gen{y_1} \times \gen{y_2} \times \ldots \times \gen{y_d} ~\textrm{ and}~ |\gen{y_1}| \ge |\gen{y_2}| \ge \cdots \ge |\gen{y_d}| > 1.\] 
A minimal generating set  $\{x_1, \ldots, x_d\}$ of a finite $p$-group $G$ of nilpotency class $2$ is said to be  \emph{distinguished} if  the set $\{{\bar x}_1, \ldots, {\bar x}_d\}$, ${\bar x}_i = x_i\Z(G)$, forms a  minimal basis for $G/\Z(G)$.


\section{Some prerequisites and useful lemmas}

Let $G$ be a finite group.
Let $\alpha \in \Autcent(G)$. Then the map $f_{\alpha}$ from $G$ into $\Z(G)$
defined by $f_{\alpha}(x) = x^{-1}\alpha(x)$ is a homomorphism which sends
$\gamma_2(G)$ to $1$. Thus $f_{\alpha}$ induces a homomorphism from
$G/\gamma_2(G)$ into $\Z(G)$. So we get a one-to-one map $\alpha \rightarrow
f_{\alpha}$ from $\Autcent(G)$ into $\Hom(G/\gamma_2(G), \Z(G))$. Conversely, 
if $f \in \Hom(G/\gamma_2(G), \Z(G))$, then $\alpha_f$ such that
$\alpha_f(x) = xf({\bar x})$ defines an endomorphism of $G$, where 
${\bar x} = x \gamma_2(G)$ . But this, in general, may not be an automorphism of $G$. More precisely, $\alpha_f$ fails to be an automorphism of $G$ when $G$ admits a non-trivial abelian direct factor.

A group $G$ is called \emph{purely non-abelian} if it does not have a  non-trivial abelian direct factor.

The following theorem of Adney and Yen \cite{AY65} shows that if $G$ is a purely non-abelian finite group, then the mapping   $\alpha \rightarrow
f_{\alpha}$ from $\Autcent(G)$ into $\Hom(G/\gamma_2(G), \Z(G))$, defined above,  is also onto.

\begin{thm}[\cite{AY65}, Theorem 1]\label{lemma0}
Let $G$ be a purely non-abelian finite group. Then the correspondence $\alpha
\rightarrow f_{\alpha}$ defined above is a one-to-one mapping of $\Autcent(G)$
onto $\Hom(G/ \gamma_2(G),$ $ \Z(G))$.
\end{thm}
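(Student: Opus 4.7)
The preceding discussion establishes that $\alpha \mapsto f_\alpha$ is a well-defined injection from $\Autcent(G)$ into $\Hom(G/\gamma_2(G), \Z(G))$. My plan is to prove the remaining surjectivity: for every $f \in \Hom(G/\gamma_2(G),\Z(G))$, the map $\alpha_f(x) = xf(\bar x)$ is in fact an automorphism of $G$.

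First I would verify that $\alpha_f$ is an endomorphism. Because $f$ is a homomorphism and $f(\bar x) \in \Z(G)$,
\[
\alpha_f(xy) = xy\,f(\bar x)f(\bar y) = xf(\bar x)\cdot yf(\bar y) = \alpha_f(x)\alpha_f(y),
\]
and $\alpha_f(x)x^{-1} = f(\bar x) \in \Z(G)$, so the moment $\alpha_f$ is bijective it is automatically a central automorphism. Since $G$ is finite, bijectivity is equivalent to injectivity, so the task reduces to showing $K := \ker\alpha_f$ is trivial. If $x \in K$ then $x = f(\bar x)^{-1} \in \Z(G)$, whence $K \le \Z(G)$; and if additionally $x \in \gamma_2(G)$ then $\bar x = 1$ forces $f(\bar x)=1$ and hence $x=1$. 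Thus $K \le \Z(G)$ and $K \cap \gamma_2(G) = 1$.

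The core of the proof is the contrapositive: if $K \neq 1$, then $G$ admits a nontrivial abelian direct factor, contradicting the purely non-abelian hypothesis. Because $K \cap \gamma_2(G) = 1$, the quotient map realises $K$ as a subgroup of the finite abelian group $G/\gamma_2(G)$. I would choose $x \in K \setminus\{1\}$ so that $\gen{\bar x}$ is a cyclic direct summand of $G/\gamma_2(G)$; such an $x$ exists because inside the abelian $p$-group $G/\gamma_2(G)$ one can always select a nontrivial element of $\bar K$ of "pure" order (i.e.\ extendable to a minimal basis). Writing $G/\gamma_2(G) = \gen{\bar x} \times \bar H_0$ and lifting $\bar H_0$ together with $\gamma_2(G)$ to a subgroup $H \le G$, centrality of $x$ makes $\gen{x} \trianglelefteq G$ normal and ensures $\gen{x} \cdot H = G$ and $\gen{x} \cap H = 1$; the last equality uses both centrality and the triviality of $K \cap \gamma_2(G)$. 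This yields the internal direct decomposition $G = \gen{x} \times H$ with $\gen{x}$ a nontrivial abelian factor, the desired contradiction.

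The step that I expect to demand the most care is the purification argument that $\bar x$ can be chosen to generate a direct summand of $G/\gamma_2(G)$ (not every cyclic subgroup of a finite abelian group is a summand), together with verifying that the lift $H$ is an honest subgroup with $\gen{x} \cap H = 1$ and not merely a set of coset representatives. These two refinements are precisely the places where the hypothesis ``purely non-abelian'' is consumed, and the remainder of the argument is bookkeeping.
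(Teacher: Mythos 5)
The paper does not reprove this result (it is quoted from Adney--Yen), so I am judging your argument on its own terms. Your reduction to injectivity of $\alpha_f$ and the two facts $K \le \Z(G)$ and $K \cap \gamma_2(G) = 1$ are correct. The gap is the purification step, and it is not merely a step ``demanding care'': the principle you invoke --- that a nontrivial subgroup $\overline{K}$ of a finite abelian $p$-group always contains a nontrivial element extendable to a minimal basis --- is false (already $\C_2 < \C_4$ fails), and the specific kernel $K = \ker\alpha_f$ really can fail to contain such an element. Take $p$ odd, $E$ extraspecial of order $p^3$ and exponent $p$, and $G = E \times \gen{c}$ with $c$ of order $p^2$. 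Define $f \in \Hom(G/\gamma_2(G), \Z(G))$ by killing the images of the generators of $E$ and sending $\bar{c} \mapsto c^{p-1}$. Then $\alpha_f(c) = c^p$, and a direct computation gives $K = \ker\alpha_f = \gen{c^p}$, whose image in $G/\gamma_2(G) \cong \C_p \times \C_p \times \C_{p^2}$ is exactly $(G/\gamma_2(G))^p$; no nontrivial element of $\overline{K}$ generates a direct summand, so your construction cannot split off even the abelian factor $\gen{c}$ that is visibly present. (This $G$ is of course not purely non-abelian; the point is that your intermediate claim is not derivable from $K \neq 1$ alone, which is all you have at that stage of the contradiction argument.)

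The repair requires a genuinely different idea: iterate. The endomorphism $\sigma = \alpha_f$ is normal, since $\overline{g^x} = \bar{g}$ and $f(\bar{g})$ is central give $\sigma(g^x) = \sigma(g)^x$; it fixes $\gamma_2(G)$ pointwise; and every $\ker\sigma^n$ is central, because $\sigma^n(g) = g \cdot (\text{central element})$. For $n$ large the chains $\sigma^n(G)$ and $\ker\sigma^n$ stabilize, one checks $\sigma^n(G) \cap \ker\sigma^n = 1$, and an order count gives $G = \sigma^n(G) \times \ker\sigma^n$ with the second factor central, hence abelian. Thus pure non-abelianness forces $\ker\sigma^n = 1$ and in particular $\ker\sigma = 1$. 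In the example above $\ker\sigma^2 = \gen{c}$ is the correct abelian direct factor, which illustrates concretely why one must pass from $\ker\sigma$ to $\ker\sigma^n$. As written, your proof has a genuine gap at its central step.
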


The following lemma  follows from  \cite[page 141]{AY65}.

\begin{lemma}\label{lemma1}
Let $G$ be a finite $p$-group of class $2$ such that $\Z(G) = \gamma_2(G)$.   Then $\Autcent(G)$ is abelian.
\end{lemma}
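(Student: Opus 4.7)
The plan is to promote the bijection $\alpha \mapsto f_\alpha$ from Theorem \ref{lemma0} into a group embedding whenever $\Z(G) = \gamma_2(G)$, regardless of whether $G$ is purely non-abelian; since the target $\Hom(G/\gamma_2(G), \Z(G))$ is abelian, the abelianness of $\Autcent(G)$ will follow.

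First I would verify the key auxiliary fact that every $\alpha \in \Autcent(G)$ acts trivially on $\gamma_2(G)$. This is immediate from the definition: for any generator $[g,h]$ of $\gamma_2(G)$, write $\alpha(g) = g z_g$ and $\alpha(h) = h z_h$ with $z_g, z_h \in \Z(G)$; then
\[\alpha([g,h]) = [g z_g, h z_h] = [g,h]\]
because central elements disappear from commutators. Under the hypothesis $\Z(G) = \gamma_2(G)$, this means $\alpha$ fixes $\Z(G)$ pointwise.

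Next I would compute the composition law. For $\alpha, \beta \in \Autcent(G)$ and $x \in G$, set $\bar x = x \gamma_2(G)$. Then
\[f_{\alpha\beta}(x) = x^{-1}\alpha(\beta(x)) = x^{-1}\alpha\bigl(x\, f_\beta(\bar x)\bigr) = f_\alpha(x)\, \alpha\bigl(f_\beta(\bar x)\bigr).\]
Since $f_\beta(\bar x) \in \Z(G) = \gamma_2(G)$, the previous step gives $\alpha(f_\beta(\bar x)) = f_\beta(\bar x)$, and hence $f_{\alpha\beta} = f_\alpha f_\beta$. This exhibits $\alpha \mapsto f_\alpha$ as a group homomorphism from $\Autcent(G)$ into the abelian group $\Hom(G/\gamma_2(G), \Z(G))$. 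The map is injective (the element $\alpha$ is recovered from $f_\alpha$ via $\alpha(x) = x f_\alpha(\bar x)$), so $\Autcent(G)$ embeds into an abelian group and is therefore abelian.

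I do not anticipate a real obstacle; the only subtle point is noticing that surjectivity of Adney--Yen is not needed here, only the homomorphism property of the correspondence, and that property is precisely what the hypothesis $\Z(G) = \gamma_2(G)$ buys by forcing every central automorphism to centralize $\Z(G)$.
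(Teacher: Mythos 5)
Your proof is correct, and it is essentially the argument the paper delegates to the citation of \cite{AY65}: since every central automorphism fixes $\gamma_2(G)$ pointwise, the hypothesis $\Z(G)=\gamma_2(G)$ makes the Adney--Yen correspondence $\alpha\mapsto f_\alpha$ an injective group homomorphism into the abelian group $\Hom(G/\gamma_2(G),\Z(G))$. You are also right that surjectivity (and hence pure non-abelianness) is not needed, though it holds automatically here since an abelian direct factor would lie in $\Z(G)=\gamma_2(G)$ and meet $\gamma_2(G)$ trivially.
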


The following five lemmas, which we, sometimes,  may use without any further reference, are well known.

\begin{lemma}\label{lemma5}
Let $\C_n$ and $\C_m$ be two cyclic groups of order $n$ and $m$
respectively. Then $\Hom(\C_n, \C_m) \cong \C_d$, where $d$ is the greatest
common divisor of $n$ and $m$, and $\C_d$ is the cyclic group of order $d$.
\end{lemma}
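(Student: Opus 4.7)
The plan is to fix generators $\C_n = \gen{x}$ and $\C_m = \gen{y}$ and observe that any homomorphism $\phi \colon \C_n \to \C_m$ is completely determined by the image $\phi(x) = y^k$ for some integer $k$ taken modulo $m$. Since $\C_m$ is abelian, pointwise multiplication makes $\Hom(\C_n, \C_m)$ into an abelian group, and the map $\phi \mapsto k \pmod{m}$ embeds it into $\C_m$; so it suffices to identify the image subgroup.

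First I would pin down which values of $k$ give a well-defined homomorphism. The relation $x^n = 1$ forces $y^{kn} = 1$, which is equivalent to $m \mid kn$. Writing $d = \gcd(n,m)$, $n = dn'$, $m = dm'$ with $\gcd(n', m') = 1$, this reduces immediately to $m' \mid k$. Thus the admissible residues $k \pmod m$ are exactly the multiples of $m'$, giving a subgroup $\gen{m'} \le \C_m$ of order $d$.

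Next I would exhibit the cyclic structure explicitly. Let $\phi_0 \in \Hom(\C_n, \C_m)$ be defined by $\phi_0(x) = y^{m'}$. Under pointwise multiplication, $\phi_0^j(x) = y^{jm'}$, which equals $1$ precisely when $m \mid jm'$, i.e.\ when $d \mid j$. So $\phi_0$ has order $d$, and since $|\Hom(\C_n, \C_m)| = d$, we get $\Hom(\C_n, \C_m) = \gen{\phi_0} \cong \C_d$.

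There is no real obstacle here; the only points requiring a moment of care are the reduction $m \mid kn \iff m' \mid k$ (which is where $\gcd(n', m') = 1$ is used) and the verification that pointwise multiplication does endow $\Hom(\C_n, \C_m)$ with a group structure (which uses commutativity of $\C_m$).
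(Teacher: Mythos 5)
Your proof is correct: the reduction $m \mid kn \iff m' \mid k$ (using $\gcd(n',m')=1$) correctly identifies the admissible images of a generator as the subgroup $\gen{y^{m'}}$ of order $d$, and exhibiting $\phi_0$ as a generator of order $d$ under pointwise multiplication establishes $\Hom(\C_n,\C_m)\cong \C_d$. The paper itself offers no proof to compare against --- it lists this lemma among several ``well known'' facts used without further reference --- and your argument is the standard one, so nothing further is needed.
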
 

\begin{lemma}\label{lemma5a}
Let $\A$, $\B$ and $\C$ be finite abelian groups. Then

{\em (i)} $\Hom(\A \times \B, \C) \cong \Hom(\A,\C) \times \Hom(\B,\C)$;

{\em (ii)}  $\Hom(\A,  \B \times \C) \cong \Hom(\A,\B) \times \Hom(\A,\C)$.
\end{lemma}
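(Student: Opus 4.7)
The plan is to construct explicit maps in both directions for each statement and verify they are mutually inverse group homomorphisms, exploiting the fact that in the category of abelian groups finite direct products are simultaneously products and coproducts.

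For (i), I would define $\Phi \colon \Hom(\A \times \B, \C) \to \Hom(\A, \C) \times \Hom(\B, \C)$ by sending $f$ to the pair of compositions of $f$ with the canonical inclusions $a \mapsto (a, 1)$ and $b \mapsto (1, b)$. In the opposite direction, define $\Psi(\alpha, \beta)$ by the rule $(a, b) \mapsto \alpha(a)\beta(b)$. The points to verify are: that $\Psi(\alpha, \beta)$ really is a homomorphism (this is the only step requiring any hypothesis, namely abelianness of $\C$, in order to interchange $\beta(b)$ and $\alpha(a')$ when evaluating on a product $(a,b)(a',b') = (aa', bb')$); that both $\Phi$ and $\Psi$ respect the pointwise group operation on the Hom-sets; and that $\Phi \circ \Psi$ and $\Psi \circ \Phi$ are the identity, the latter relying on the factorization $(a,b) = (a,1)(1,b)$ inside $\A \times \B$.

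For (ii), the recipe is dual. Define $\Phi \colon \Hom(\A, \B \times \C) \to \Hom(\A, \B) \times \Hom(\A, \C)$ by sending $f$ to the pair $(\pi_{\B} \circ f,\, \pi_{\C} \circ f)$ obtained by composing with the two coordinate projections, with inverse $\Psi(\alpha, \beta)(a) = (\alpha(a), \beta(a))$. Here no abelianness hypothesis is actually needed; the statement is just the universal property of the direct product in the category of all groups, and every verification is mechanical. The only substantive step in the whole lemma is thus the single line in part (i) where the commutativity of $\C$ is invoked, so there is no real obstacle, which is presumably why the authors list this among the well-known lemmas used without further reference.
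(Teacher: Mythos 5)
Your proof is correct and complete; note that the paper itself offers no proof of this lemma, listing it among the ``well known'' facts to be used without further reference. Your argument is the standard one (composition with the canonical inclusions and projections, mutually inverse by the factorization $(a,b)=(a,1)(1,b)$, with commutativity of the target invoked exactly where you say it is), so there is nothing in the paper to compare against and nothing to fix.
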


\begin{lemma}\label{lemma5b}
Let $\A$, $\B$ and $\C$ be finite abelian groups such that $\A$ and $\B$ are
isomorphic. Then  $\Hom(\A , \C) \cong  \Hom(\B,\C)$.
\end{lemma}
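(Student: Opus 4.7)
The plan is to exploit the functoriality of $\Hom(-,\C)$ in its first argument, which means I do not need to invoke the structure theorem or the previous two lemmas at all. Fix any isomorphism $\phi\colon \A \to \B$ supplied by the hypothesis, and define
\[
\Phi\colon \Hom(\B,\C) \to \Hom(\A,\C), \qquad \Phi(f) = f \circ \phi.
\]
Since $\phi$ is a group homomorphism and composition of homomorphisms is a homomorphism, $\Phi$ is well defined. I would then verify that $\Phi$ is itself a homomorphism of abelian groups with respect to the pointwise addition on the Hom-groups: for $f_1, f_2 \in \Hom(\B,\C)$ and $a \in \A$, one has $(f_1+f_2)(\phi(a)) = f_1(\phi(a)) + f_2(\phi(a))$, using that $\C$ is abelian.

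Next I would exhibit the obvious two-sided inverse $\Psi\colon \Hom(\A,\C) \to \Hom(\B,\C)$ given by $\Psi(g) = g \circ \phi^{-1}$, and check that $\Phi \circ \Psi = \Id$ and $\Psi \circ \Phi = \Id$; both verifications collapse immediately to $\phi \circ \phi^{-1} = \Id_{\B}$ and $\phi^{-1}\circ \phi = \Id_{\A}$. This yields the desired isomorphism $\Hom(\A,\C) \cong \Hom(\B,\C)$.

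As an alternative route, one could decompose $\A$ (and therefore $\B$, since the two groups share the same invariants) into a direct product of cyclic $p$-primary factors via the fundamental theorem of finite abelian groups, then apply Lemma \ref{lemma5a}(i) together with Lemma \ref{lemma5} to each summand and identify $\Hom(\A,\C)$ and $\Hom(\B,\C)$ termwise. I prefer the functorial argument above because it is shorter and avoids any dependence on the structure theorem. The statement is essentially a tautology, holding in far greater generality than the finite abelian setting, so there is no real obstacle; it is recorded here only to streamline citations later in the paper.
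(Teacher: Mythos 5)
Your argument is correct and complete: precomposition with the fixed isomorphism $\phi\colon \A \to \B$ gives a homomorphism $\Hom(\B,\C) \to \Hom(\A,\C)$ with two-sided inverse given by precomposition with $\phi^{-1}$, and the verification that $\Phi$ respects pointwise addition is exactly as you describe. The paper states this lemma among a list of well-known facts and offers no proof at all, so there is nothing to compare against; your functorial argument is the standard one, works for arbitrary groups $\A \cong \B$ and abelian $\C$, and is preferable to the alternative route through the structure theorem and Lemmas \ref{lemma5} and \ref{lemma5a}, which would prove a weaker statement with more machinery.
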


\begin{lemma}\label{lemma5c}
Let $\A$ and $\C$ be finite abelian groups and $\B$ is a proper subgroup of
$\C$. Then $|\Hom(\A , \B)| \le |\Hom(\A , \C)|$.
\end{lemma}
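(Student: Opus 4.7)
The plan is to exhibit an explicit injection from $\Hom(\A, \B)$ into $\Hom(\A, \C)$, which immediately gives the cardinality inequality. The natural candidate is the post-composition map: let $\iota : \B \hookrightarrow \C$ denote the inclusion homomorphism, and define
\[
\Phi : \Hom(\A, \B) \to \Hom(\A, \C), \qquad \phi \mapsto \iota \circ \phi.
\]
Because $\iota$ is itself a homomorphism, each $\Phi(\phi)$ indeed lies in $\Hom(\A, \C)$; and because $\iota$ is injective, two distinct homomorphisms $\phi_1 \ne \phi_2$ must disagree on some $a \in \A$, whence $\iota \circ \phi_1$ and $\iota \circ \phi_2$ also disagree at $a$. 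Thus $\Phi$ is injective.

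Having established this injection, the conclusion $|\Hom(\A, \B)| \le |\Hom(\A, \C)|$ is immediate. There is no real obstacle: the argument does not even use that $\A$, $\B$, $\C$ are abelian or finite, and the hypothesis ``$\B$ is a proper subgroup of $\C$'' is in fact stronger than needed, since equality would hold trivially when $\B = \C$. As an alternative route, one could invoke the structure theorem for finite abelian groups together with Lemmas \ref{lemma5}--\ref{lemma5b} to decompose both sides as explicit products of $\gcd$-factors and then compare them term by term; but the injection approach above is considerably shorter and more transparent, so that is what I would write up.
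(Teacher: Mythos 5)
Your proof is correct: post-composing with the inclusion $\iota : \B \hookrightarrow \C$ gives an injection $\Hom(\A,\B) \to \Hom(\A,\C)$, which is exactly the standard argument, and your observations that neither finiteness, commutativity, nor properness of $\B$ is needed are also right. The paper offers no proof to compare against --- it lists this lemma among several stated as ``well known'' and used without further reference --- so your write-up simply supplies the obvious argument the author left implicit.
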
 

\begin{lemma}\label{lemma2.7}
Let $\C_{p^m}$ be a cyclic group of order $p^m$ and $B$ be any finite abelian group. Then $|\Hom(\C_{p^m}, \B)| = |\Hom(\C_{p^m} , \Omega_{m}(B))|$.
\end{lemma}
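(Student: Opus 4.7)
The plan is to count both sides directly, using the fact that homomorphisms from a cyclic group are determined by the image of a generator.

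First I would observe that if $B$ is any finite abelian group and $\phi : \C_{p^m} \to B$ is a homomorphism, then $\phi$ is completely determined by the image $\phi(x)$ of a fixed generator $x$ of $\C_{p^m}$, and this image can be any element $b \in B$ satisfying $b^{p^m} = 1$. In particular, $|\Hom(\C_{p^m}, B)|$ equals the number of elements of $B$ whose order divides $p^m$. The same counting applies with $B$ replaced by $\Omega_m(B)$: a homomorphism $\C_{p^m} \to \Omega_m(B)$ corresponds to an element of $\Omega_m(B)$ of order dividing $p^m$.

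Next I would note that, since $B$ is abelian, the set $\{b \in B : b^{p^m} = 1\}$ is itself a subgroup (closure under multiplication being immediate because $(ab)^{p^m} = a^{p^m} b^{p^m} = 1$), and this subgroup is generated by the elements killed by $p^m$. Thus, for the $p$-part of $B$, this coincides exactly with $\Omega_m(B)$ as defined in the paper. In particular, every element of $\Omega_m(B)$ has order dividing $p^m$, and conversely every element of $B$ of order dividing $p^m$ already lies in $\Omega_m(B)$. Therefore the set of elements counted on the two sides is identical, and
\[
|\Hom(\C_{p^m}, B)| \;=\; |\{b \in B : b^{p^m} = 1\}| \;=\; |\Omega_m(B)| \;=\; |\Hom(\C_{p^m}, \Omega_m(B))|.
\]

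An alternative route, if one prefers a structural argument, is to decompose $B$ via its primary decomposition into cyclic factors $B \cong \prod_i \C_{p^{n_i}} \times B'$, where $B'$ has order coprime to $p$. Then Lemma \ref{lemma5a} and Lemma \ref{lemma5} give $|\Hom(\C_{p^m}, B)| = \prod_i p^{\min(m, n_i)}$, while $\Omega_m(B) \cong \prod_i \C_{p^{\min(m, n_i)}}$ and applying the same lemmas to $\Omega_m(B)$ yields the same product. There is no real obstacle here; the only subtle point is recognizing that in an abelian group the defining generating set for $\Omega_m(B)$ already constitutes a subgroup, so the inclusion $\Omega_m(B) \subseteq \{b : b^{p^m}=1\}$ is actually an equality, which is what makes the two counts match.
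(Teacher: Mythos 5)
Your proof is correct: the count $|\Hom(\C_{p^m},B)|=|\{b\in B: b^{p^m}=1\}|$ is the standard argument, and your observation that in an abelian group this set is already a subgroup and hence equals $\Omega_m(B)$ as defined is exactly the point needed to conclude. The paper itself offers no proof of this lemma (it is listed among the ``well known'' facts), so there is nothing to compare against; your argument is the standard one and fills that gap cleanly.
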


The following lemma seems well known. But we include  a proof here, because we 
could not find a suitable reference for it.

\begin{lemma}\label{lemma5d}
Let $G$ be a finite abelian $p$-group and $M$ be a maximal subgroup of $G$. 
Then there exists a subgroup $\Ha$ of $G$ and a positive integer $i$ such that 
$G = \Ha \times C_{p^{i+1}}$ and $\M = \Ha \times C_{p^i}$.
\end{lemma}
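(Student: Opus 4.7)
The plan is to exploit the invariant factor decomposition of $G$. I would fix a basis
\[G = \langle x_1 \rangle \times \cdots \times \langle x_d \rangle, \qquad |x_j| = p^{a_j},\quad a_1 \ge a_2 \ge \cdots \ge a_d \ge 1,\]
consider the quotient map $\phi \colon G \onto G/M \cong \C_p$, and record the scalars $\epsilon_j := \phi(x_j) \in \mathbb{F}_p$. Not all the $\epsilon_j$ are zero, and since $\phi$ kills $\Phi(G)$ the map is completely determined by them.

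Next I would choose $k$ to be the \emph{largest} index with $\epsilon_k \ne 0$. The point of taking $k$ maximal is twofold: $\epsilon_j = 0$ automatically for $j > k$, so $x_j \in M$ requires no adjustment, while $a_j \ge a_k$ for $j \le k$. For each $j < k$ I would then set
\[x_j' := x_j\, x_k^{-c_j}, \qquad c_j \in \mathbb{Z} \text{ any lift of } \epsilon_j \epsilon_k^{-1} \in \mathbb{F}_p.\]
Then $\phi(x_j') = 0$, and inside the direct summand $\langle x_j \rangle \times \langle x_k \rangle$ the product of $x_j$ (order $p^{a_j}$) with $x_k^{-c_j}$ (order dividing $p^{a_k} \le p^{a_j}$) has order exactly $p^{a_j}$, the maximum of the two.

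I would then verify that $\{x_1', \ldots, x_{k-1}', x_k, x_{k+1}, \ldots, x_d\}$ gives an internal direct product decomposition of $G$: it generates $G$ because $x_j = x_j'\, x_k^{c_j}$ recovers each old generator, and the product of the orders of the cyclic pieces still equals $|G|$, so the natural map from the corresponding external direct product onto $G$ is a surjection between groups of the same order, hence an isomorphism. Setting
\[\Ha := \langle x_1' \rangle \times \cdots \times \langle x_{k-1}' \rangle \times \langle x_{k+1} \rangle \times \cdots \times \langle x_d \rangle,\]
one gets $G = \Ha \times \langle x_k \rangle$ with $\langle x_k \rangle \cong \C_{p^{a_k}}$. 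By construction $\Ha \le M$ and $x_k^p \in M$, so $\Ha \times \langle x_k^p \rangle \subseteq M$; the count $|\Ha \times \langle x_k^p \rangle| = |G|/p = |M|$ forces equality, yielding $M = \Ha \times \C_{p^{a_k - 1}}$ and the lemma with $i = a_k - 1$.

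The only delicate point is preservation of orders under the basis change, and the choice of $k$ as the largest index with $\epsilon_k \ne 0$ is tailored precisely for this. A non-maximal choice would force an adjustment of some $x_j$ with $j > k$, and then $a_j < a_k$ could push the order of $x_j x_k^{-c_j}$ above $p^{a_j}$, destroying the direct-product structure.
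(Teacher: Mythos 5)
Your proof is correct, but it takes a genuinely different route from the paper's. The paper argues by induction on $|G|$: it splits off a cyclic factor $\langle y\rangle$ of maximal order $q$ and distinguishes the case where $M$ contains an element of order $q$ (pass to the complement $I$ and induct) from the case where it does not (then $M$ lies in, hence equals, the explicit index-$p$ subgroup $\langle x_1^p\rangle\times\langle x_2\rangle\times\cdots\times\langle x_d\rangle$). You instead work non-inductively: viewing $G\onto G/M\cong \C_p$ as an $\mathbb{F}_p$-linear functional on $G/\Phi(G)$ with coordinates $\epsilon_j$, you perform a single change of basis that pushes every generator except $x_k$ into $M$, with $k$ chosen so that $p^{a_k}$ is minimal among the orders of the generators being modified --- exactly what keeps each order equal to $p^{a_j}$ and hence preserves the direct-product structure. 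Your surjection-counting argument for the new decomposition and the index count $|\Ha\times\langle x_k^p\rangle|=|G|/p=|M|$ are both sound. Your version has the advantage of producing the adapted basis explicitly in one step and isolating the single delicate point (order preservation under the basis change); the paper's induction avoids the linear-algebra bookkeeping at the cost of a two-case analysis. One shared cosmetic issue: both proofs can yield $i=a_k-1=0$ (e.g.\ $G=\C_{p^2}\times\C_p$ with $M$ the first factor, where no decomposition with $i\ge 1$ exists), so ``positive integer $i$'' in the statement should read ``non-negative integer $i$'', which is what the paper tacitly assumes when it applies the lemma with ``some non-negative integer $i$'' in the proof of Lemma \ref{lemma6}.
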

\begin{proof}
We prove the lemma  by induction on the finite order $|G| \ge p$ of $G$.  Notice  that $|G| = 1$ is
impossible because $G$ has a subgroup $M < G$.  The lemma holds
trivially when $|G| = p$. So we may assume that $|G| > p$, and that the
lemma holds for all strictly smaller values of $|G|$.

     Let $q = p^e$ be the exponent  of $G$. Notice that $q > 1$
since $G \neq 1$.  Any element $y \in G$ with order $q$ lies in a minimal basis
for $G$.  So there is some subgroup $I$ of $G$ such that $G = \gen{y} \times I$. Furthermore $|I| < |G|$ since $y$ has order $q >
1$.

     Suppose that $M$ contains the above element $y$ of order $q$.  Then
$M = \gen{y} \times (M \cap I)$, where $M \cap I$ is a
maximal subgroup of $I$. By induction there exist a subgroup $J < I$ and
an element $x \in I$ such that $I = J \times \left < x \right >$ and $M
\cap I = J \times \gen{ x^p}$. The lemma now holds with this
$x$ and the subgroup $H = J \times \gen{y}$ of $G$.

     We have  handled every case where $M$ contains some element
$y$ of order $q$.  So we may assume from now on that no such $y$ lies in
$M$.  Let $x_1, x_2, \dots, x_d$ be a minimal basis for $G$.  Then the order of $x_1$ is the exponent $q$ of $G$.
Since $M$ contains no element of order $q$ in $G$, it is contained in the subgroup
\[ K = \gen{x_1^p} \times  \gen{x_2}  \times \dots \times \gen{ x_d}, \]
which has index $p$ in $G$. Since $M$ is maximal in $G$, it must equal
$K$. Then the lemma holds with $x = x_1$ and $H = \gen{ x_2}  \times \dots \gen{x_d}$.   \hfill $\Box$
\end{proof}

The following interesting lemma is from \cite[Lemma D]{CM01}. 
\begin{lemma}\label{lemma6}
Let $\A$, $\B$, $\C$ and $\D$ be finite abelian $p$-groups such that $\A$ is isomorphic to a proper subgroup of $\B$ and $\C$ is isomorphic to a proper 
subgroup of $\D$. Then  $|\Hom(\A , \C)| <  |\Hom(\B,\D)|$.
\end{lemma}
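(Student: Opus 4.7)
The plan is to prove the inequality by induction on the integer $N = \log_p[\B:\A] + \log_p[\D:\C]$, which is at least $2$ since each summand is at least $1$. Replacing $\A$ and $\C$ by isomorphic copies (which does not change the orders of the relevant Hom-groups), I may assume $\A < \B$ and $\C < \D$ are genuine proper inclusions of subgroups.

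For the base case $N = 2$, both $\A$ is maximal in $\B$ and $\C$ is maximal in $\D$. Applying Lemma \ref{lemma5d} to each yields
\[
\B = H \times \C_{p^{i+1}}, \quad \A = H \times \C_{p^i}, \quad \D = K \times \C_{p^{j+1}}, \quad \C = K \times \C_{p^j},
\]
for some finite abelian $p$-groups $H, K$ and integers $i, j \ge 0$ (with the convention $\C_{p^0} = 1$). Expanding via Lemma \ref{lemma5a}, both $|\Hom(\A,\C)|$ and $|\Hom(\B,\D)|$ become four-fold direct products of Hom-groups. The factor $|\Hom(H, K)|$ is common to both and cancels from the ratio $|\Hom(\B,\D)|/|\Hom(\A,\C)|$; the remaining three factors are each at least $1$ by Lemmas \ref{lemma5c} and \ref{lemma2.7}. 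Crucially, Lemma \ref{lemma5} identifies the cyclic-vs-cyclic factor as $p^{\min(i+1,j+1) - \min(i,j)} = p$, since $\min(i+1,j+1) - \min(i,j) = 1$ for all $i, j \ge 0$. Hence $|\Hom(\B,\D)|/|\Hom(\A,\C)| \ge p > 1$, establishing the base case.

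For the inductive step $N \ge 3$, at least one of the inclusions is not maximal; by symmetry assume $\A$ is not maximal in $\B$, and fix a maximal subgroup $M$ of $\B$ containing $\A$, so $\A < M < \B$. The inductive hypothesis applied to the quadruple $(\A, M, \C, \D)$, whose corresponding integer equals $N - 1$, gives $|\Hom(\A, \C)| < |\Hom(M, \D)|$. To close the step, it remains to verify $|\Hom(M, \D)| \le |\Hom(\B, \D)|$: writing $\B = H \times \C_{p^{i+1}}$ and $M = H \times \C_{p^i}$ via Lemma \ref{lemma5d}, Lemmas \ref{lemma5a} and \ref{lemma2.7} reduce this to the trivial $|\Omega_i(\D)| \le |\Omega_{i+1}(\D)|$. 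Chaining the two comparisons yields $|\Hom(\A, \C)| < |\Hom(\B, \D)|$.

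The main obstacle is the base case, where the extra factor of $p$ must be extracted uniformly across every configuration of $i, j \ge 0$, including the degenerate values where $\C_{p^i}$ or $\C_{p^j}$ is trivial; the identity $\min(i+1,j+1) - \min(i,j) = 1$ handles all of these simultaneously, after which the inductive step is essentially routine bookkeeping via the already-proved Hom-lemmas.
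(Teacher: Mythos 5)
Your proof is correct and follows essentially the same route as the paper's: reduce to the case of maximal subgroups, decompose via Lemma \ref{lemma5d} and Lemma \ref{lemma5a}, and extract the strict factor of $p$ from $|\Hom(\C_{p^{i+1}},\C_{p^{j+1}})|/|\Hom(\C_{p^i},\C_{p^j})| = p^{\min(i+1,j+1)-\min(i,j)} = p$. The only difference is that the paper dispatches the reduction to the maximal case with a one-line appeal to Lemma \ref{lemma5c}, whereas you formalize it as an induction on the total index and explicitly verify the monotonicity in the first argument of $\Hom$; this is slightly more careful but not a different argument.
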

\begin{proof}
By Lemma \ref{lemma5c}, it is sufficient to prove the result when $\A$ is 
isomorphic to a maximal subgroup $\M$ of $\B$ and $\C$ is isomorphic to a maximal subgroup $\N$ of $\D$. 
By  Lemma \ref{lemma5d}, we have $\M \cong \E \times \C_{p^i}$ 
and  $\B \cong \E \times \C_{p^{i+1}}$ for some group $\E$ and some non-negative
integer $i$. Also  $\N \cong \F \times \C_{p^j}$ and $\D \cong \F \times \C_{p^{j+1}}$ for some 
group $\F$ and some non-negative integer $j$.  By Lemma \ref{lemma5a}
\[\Hom(\M,\N) \cong \Hom(\E,\F) \times \Hom(\E,\C_{p^j}) \times
\Hom(\F,\C_{p^i}) \times \Hom(\C_{p^i},\C_{p^j})\]
and 
\[\Hom(\B,\D) \cong \Hom(\E,\F) \times \Hom(\E,\C_{p^{j+1}}) \times
\Hom(\F,\C_{p^{i+1}}) \times \Hom(\C_{p^{i+1}},\C_{p^{j+1}}).\]
Since  $|\Hom(\E,\C_{p^j})| \le |\Hom(\E,\C_{p^{j+1}})|$, $|\Hom(\F,\C_{p^i})|
\le |\Hom(\F,\C_{p^{i+1}})|$ and  $|\Hom(\C_{p^i},\C_{p^j})|$ $ < 
|\Hom(\C_{p^{i+1}},\C_{p^{j+1}})|$, it follow that  $|\Hom(\M,\N)| 
< |\Hom(\B,\D)|$. Now by Lemma \ref{lemma5b} we get $\Hom(\M,\N) \cong 
\Hom(\A , \C)$. Hence  $|\Hom(\A,\C)| < |\Hom(\B,\D)|$.  \hfill $\Box$  
\end{proof}

\section{Groups $G$ satisfying Hypothesis A}

In this section we derive some interesting properties of finite groups  satisfying Hypothesis A and prove Theorems A and B.  We start with the following easy lemma.

\begin{lemma}\label{lemma6a}
Let $G$ be a finite $p$-group of class $2$. Then the following holds:
\begin{enumerate}
\item The exponents of $\gamma_2(G)$ and $G/\Z(G)$ are same.
\item For each $x \in G-\Z(G)$, $[x,G]$ is a non-trivial normal subgroup of $G$ contained in 
$\gamma_2(G)$.
\item For $x \in G-\Z(G)$, the  exponent of the subgroup $[x,G]$ is equal to the order of 
${\bar x} = x\Z(G) \in G/\Z(G)$.
\end{enumerate}
\end{lemma}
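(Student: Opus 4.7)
The plan is to reduce all three statements to the bilinearity of the commutator map together with the identity $[x^n, y] = [x,y]^n = [x, y^n]$, both of which hold in a class-$2$ group because $\gamma_2(G) \le \Z(G)$ forces commutators to be central. Granted these identities, each part becomes a short divisibility argument.

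For (1), let $e$ be the exponent of $G/\Z(G)$ and $f$ the exponent of $\gamma_2(G)$. Since $x^e \in \Z(G)$ for every $x \in G$, the identity $[x,y]^e = [x^e, y] = 1$ holds for all $y \in G$, so every generator $[x,y]$ of $\gamma_2(G)$ has order dividing $e$; hence $f \mid e$. Conversely, $[x,y]^f = [x^f, y] = 1$ for all $y$ forces $x^f \in \Z(G)$ for every $x \in G$, so $e \mid f$. Therefore $e = f$.

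For (2), the bilinearity identities $[x, g_1 g_2] = [x, g_1][x, g_2]$ and $[x, g^{-1}] = [x, g]^{-1}$ show that the set $[x, G]$ is closed under products and inverses, hence is a subgroup of $\gamma_2(G)$. Being contained in $\gamma_2(G) \le \Z(G)$, it is central and therefore normal in $G$. Non-triviality is immediate from $x \notin \Z(G)$, which furnishes some $g \in G$ with $[x, g] \ne 1$.

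For (3), repeat the argument of (1) restricted to the fixed element $x$. If $n$ denotes the order of $\bar x = x\Z(G)$ in $G/\Z(G)$, then $x^n \in \Z(G)$ gives $[x, g]^n = [x^n, g] = 1$ for every $g \in G$, so the exponent of the subgroup $[x, G]$ divides $n$. Conversely, if $m$ is this exponent, then $[x^m, g] = [x, g]^m = 1$ for every $g$, so $x^m \in \Z(G)$, which forces $n \mid m$; thus the exponent of $[x, G]$ equals $n$. I do not foresee a real obstacle here; the only point requiring care is to invoke bilinearity cleanly at the start, since the paper defines $[x, G]$ only as a set in the introduction, so (2) must be established before the subgroup-level statements in (1) and (3) are deployed.
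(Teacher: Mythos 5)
Your proposal is correct and its proof of part (3) is essentially identical to the paper's (the paper writes the order of $\bar x$ as $p^c$ and argues the exponent of $[x,G]$ can be neither larger nor smaller, exactly as you do). The paper dismisses (1) and (2) as well known, so your added arguments for those parts, which are also correct, simply fill in what the author omits.
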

\begin{proof}  Since (1) and (2) are well known, we only  prove (3). 
Let the order of ${\bar x} = x\Z(G)$ be $p^c$. Then $x^{p^c} \in \Z(G)$. Let
$[x,g] \in [x,G]$ be an arbitrary element. Now $[x,g]^{p^c} = [x^{p^c}, g] =
1$. Thus the exponent of $[x,G]$ is less than or equal to $p^c$. We claim that it
can not be less than $p^c$. Suppose that the exponent of $[x,G]$ is $p^b$, where
$b < c$. Then $[x^{p^b},g] = [x,g]^{p^b} = 1$ for all $g \in G$. This proves
that $x^{p^b} \in \Z(G)$, which gives a contradiction to the fact that order
of ${\bar x}$ is $p^c$. Hence exponent of $[x,G]$ is equal to $p^c$, which completes the proof of the lemma.
\hfill $\Box$
\end{proof}

Let $G$ be a finite nilpotent group of  class $2$. Let $\phi \in \Aut_c(G)$. Then the map $g \mapsto g^{-1}\phi(g)$ is a homomorphism of
$G$ into $\gamma_2(G)$. This homomorphism sends $Z(G)$ to $1$. So it induces a homomorphism $f_{\phi} \colon G/Z(G) \to \gamma_2(G)$, sending
$gZ(G)$ to $g^{-1}\phi(g)$, for any $g \in G$.  It can be easily seen that
the map $\phi \mapsto f_{\phi}$ is a monomorphism of the group
$\Aut_c(G)$ into $\Hom(G/Z(G), \gamma_2(G))$.

Any $\phi \in \Aut_c(G)$ sends any $g \in G$ to some $\phi(g)
\in g^G$. Then $f_{\phi}(gZ(G)) = g^{-1}\phi(g)$ lies in $g^{-1}g^G =
[g,G]$.  Denote
\[  \{ \, f \in \Hom(G/Z(G), \gamma_2(G)) \mid f(gZ(G)) \in [g,G], \text{ for
all $g \in G$}\,\} \]
by $\Hom_c(G/Z(G), \gamma_2(G))$. Then $f_{\phi} \in \Hom_c(G/Z(G),
\gamma_2(G))$ for all $\phi \in \Aut_c(G)$. On the other hand, if $f \in
\Hom_c(G/Z(G), \gamma_2(G))$, then the map sending any $g \in G$ to
$gf(gZ(G))$ is an automorphism $\phi \in \Aut_c(G)$ such that $f_{\phi}
= f$. Thus we have

\begin{prop}\label{prop1}
 Let $G$ be a finite nilpotent group of class 2. Then the
above map $\phi \mapsto f_{\phi}$ is an isomorphism of the group
$\Aut_c(G)$ onto $\Hom_c(G/Z(G), \gamma_2(G))$.
\end{prop}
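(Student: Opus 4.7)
Essentially all the ingredients for the proof appear in the paragraphs preceding the statement, and the plan is to assemble them, verifying the few small facts that were claimed but not explicitly proved.

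To show that $\phi \mapsto f_\phi$ is a group homomorphism, I would exploit that $\gamma_2(G) \le \Z(G)$, so every element of $\gamma_2(G)$ is a singleton conjugacy class and is therefore fixed by every class-preserving automorphism. Writing $\psi(g) = g \cdot f_\psi(g\Z(G))$ with $f_\psi(g\Z(G)) \in \gamma_2(G)$, applying $\phi$ and using this fixing gives
\[
f_{\phi\psi}(g\Z(G)) \;=\; g^{-1}\phi(\psi(g)) \;=\; g^{-1}\phi(g) \cdot f_\psi(g\Z(G)) \;=\; f_\phi(g\Z(G))\,f_\psi(g\Z(G)).
\]
Injectivity of $\phi \mapsto f_\phi$ is immediate: $f_\phi \equiv 1$ forces $\phi = \mathrm{id}$. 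That $f_\phi$ lies in $\Hom_c(G/\Z(G), \gamma_2(G))$ is precisely the content of $\phi(g) \in g^G = g[g,G]$, which was already observed.

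The main remaining task is surjectivity. For $f \in \Hom_c(G/\Z(G), \gamma_2(G))$, set $\phi(g) := g\,f(g\Z(G))$. I would verify, in order, that (i) $\phi$ is a group homomorphism of $G$, using that $f(g\Z(G)) \in \gamma_2(G) \le \Z(G)$ commutes with every element of $G$ so the factors may be rearranged, combined with $f$ being a homomorphism; (ii) $\phi$ is injective, since $\phi(g) = 1$ places $g \in \Z(G)$, which forces $f(g\Z(G)) = 1$ and hence $g = 1$, so $\phi \in \Aut(G)$; (iii) $\phi \in \Aut_c(G)$, because the condition $f(g\Z(G)) \in [g,G]$ gives $\phi(g) \in g[g,G] = g^G$; and (iv) $f_\phi = f$ by construction.

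I do not foresee any real obstacle; each verification rests on the centrality of $\gamma_2(G)$, which makes the calculations routine. The one step that is a touch less formal than the others is (i) in the surjectivity part, where one has to notice that $f$ taking values in $\Z(G)$ is exactly what lets the multiplicative definition $\phi(g) = g\,f(g\Z(G))$ respect the group law of $G$; the rest is bookkeeping.
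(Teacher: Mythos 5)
Your proposal is correct and follows essentially the same route as the paper, whose ``proof'' is precisely the discussion preceding the statement; you have simply filled in the verifications (homomorphism property via centrality of $\gamma_2(G)$, injectivity, and the surjectivity check that $g \mapsto gf(g\Z(G))$ is a class-preserving automorphism) that the paper leaves as asserted. No discrepancies.
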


We also need the following easy observation.

\begin{lemma}\label{lemma7}
Let $H = \C_{n_1} \times \cdots \times \C_{n_r}$ and $K = \C_{m_1} \times 
\cdots \times \C_{m_s}$ be two finite abelian groups. Let $r \le s$ and 
$n_i$ divides $m_i$, where $1 \le i \le r$. Then $H$ is isomorphic to a
subgroup of $K$.
\end{lemma}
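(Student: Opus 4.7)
The plan is to embed each cyclic factor of $H$ into the corresponding cyclic factor of $K$ and then take the product of these embeddings inside $K$.

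First I would use the fact that for cyclic $p$-groups (or cyclic groups in general), whenever $n_i$ divides $m_i$, the cyclic group $\C_{n_i}$ is isomorphic to the unique subgroup of order $n_i$ in $\C_{m_i}$. Explicitly, if $x_i$ generates $\C_{m_i}$, then $y_i := x_i^{m_i/n_i}$ has order exactly $n_i$, and $L_i := \langle y_i \rangle$ is a subgroup of $\C_{m_i}$ isomorphic to $\C_{n_i}$.

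Next, using the hypothesis $r \le s$, I would form the internal subgroup
\[
L := L_1 \times L_2 \times \cdots \times L_r \times \{1\} \times \cdots \times \{1\} \; \le \; \C_{m_1} \times \cdots \times \C_{m_r} \times \C_{m_{r+1}} \times \cdots \times \C_{m_s} = K,
\]
where the last $s-r$ coordinates are the trivial subgroup of the respective $\C_{m_j}$. Since each $L_i \cong \C_{n_i}$, one immediately obtains
\[
L \cong L_1 \times \cdots \times L_r \cong \C_{n_1} \times \cdots \times \C_{n_r} = H,
\]
which exhibits $H$ as a subgroup of $K$.

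There is essentially no obstacle here; the only observations being used are that a cyclic group of order $n$ embeds into a cyclic group of order $m$ whenever $n \mid m$, and that direct products of subgroups of the factors form subgroups of the direct product. The condition $r \le s$ is used solely to ensure we have enough coordinates in $K$ in which to place the images of the factors of $H$.
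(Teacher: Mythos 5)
Your proof is correct. The paper states this lemma as an ``easy observation'' and gives no proof at all, and your argument --- embedding each $\C_{n_i}$ as the unique subgroup of order $n_i$ in $\C_{m_i}$ and padding the remaining $s-r$ coordinates with trivial subgroups --- is exactly the standard argument the author is implicitly relying on.
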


We use above information and Lemma \ref{lemma6} to prove the following.

\begin{prop}\label{prop2}
Let $G$ be a finite $p$-group of class $2$ which satisfies Hypothesis A. Then
$\gamma_2(G) = \Z(G)$.
\end{prop}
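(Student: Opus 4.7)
The plan is to argue by contradiction: assume $\gamma_2(G) < \Z(G)$ strictly, and derive $|\Aut_c(G)| < |\Autcent(G)|$, contradicting Hypothesis A. The two orders will be estimated via Proposition \ref{prop1} and the Adney--Yen Theorem \ref{lemma0}, and the strict inequality will come from Lemma \ref{lemma6}.

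First I would establish that $G$ is purely non-abelian, so that Theorem \ref{lemma0} is available. Otherwise, write $G = A \times H$ with $A$ a non-trivial abelian direct factor and $H$ a purely non-abelian complement; $H$ is necessarily non-abelian (else $G$ itself would be), so $H/\gamma_2(H)$ is a non-trivial abelian $p$-group and $\Hom(H/\gamma_2(H), A)$ admits a non-zero homomorphism $f$. The map $\alpha(a, h) = (a \cdot f(h\gamma_2(H)), h)$ is an automorphism of $G$ whose correcting factor $f(\br{h})$ lies in $A \le \Z(G)$, so $\alpha \in \Autcent(G)$; however $\alpha(1, h) = (f(\br{h}), h)$ falls outside $(1, h)^G = \{1\} \times h^H$ whenever $f(\br{h}) \ne 1$, so $\alpha \notin \Aut_c(G)$. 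This contradicts Hypothesis A, forcing $G$ to be purely non-abelian.

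Now suppose $\gamma_2(G) < \Z(G)$. The surjection $G/\gamma_2(G) \onto G/\Z(G)$ has non-trivial kernel $\Z(G)/\gamma_2(G)$, so the finite abelian $p$-group $G/\Z(G)$ is a proper quotient of $G/\gamma_2(G)$, and the invariant factor decomposition then shows it is isomorphic to a proper subgroup. Similarly $\gamma_2(G)$ sits as an actual proper subgroup of $\Z(G)$. Applying Lemma \ref{lemma6} with $\A = G/\Z(G)$, $\B = G/\gamma_2(G)$, $\C = \gamma_2(G)$, $\D = \Z(G)$ yields
\[|\Hom(G/\Z(G), \gamma_2(G))| < |\Hom(G/\gamma_2(G), \Z(G))|.\]
By Proposition \ref{prop1}, $|\Aut_c(G)| = |\Hom_c(G/\Z(G), \gamma_2(G))| \le |\Hom(G/\Z(G), \gamma_2(G))|$, and by Theorem \ref{lemma0}, $|\Autcent(G)| = |\Hom(G/\gamma_2(G), \Z(G))|$. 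Chaining these gives $|\Aut_c(G)| < |\Autcent(G)|$, contradicting Hypothesis A.

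The only genuinely non-trivial step is the reduction to the purely non-abelian case; once Theorem \ref{lemma0} applies, the conclusion is essentially forced by the size comparison in Lemma \ref{lemma6}.
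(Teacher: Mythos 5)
Your proof is correct, and its core is the same as the paper's: assuming $\gamma_2(G) < \Z(G)$, both arguments apply Lemma \ref{lemma6} with $\A = G/\Z(G)$, $\B = G/\gamma_2(G)$, $\C = \gamma_2(G)$, $\D = \Z(G)$ and chain the resulting strict inequality with Proposition \ref{prop1} and Theorem \ref{lemma0} to contradict Hypothesis A. The one place you genuinely diverge is the reduction to the purely non-abelian case. The paper argues by counting: writing $G = \K \times \A$ it asserts $|\Autcent(G)| > |\Autcent(\K)||\Autcent(\A)| \ge |\Autcent(\K)| \ge |\Aut_c(\K)| = |\Aut_c(G)|$. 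You instead produce an explicit witness: a non-zero $f \in \Hom(H/\gamma_2(H), A)$ gives the map $\alpha(a,h) = (af(\bar h), h)$, which is an automorphism, is central because its correcting factor lies in $A \le \Z(G)$, yet fails to be class preserving because $(1,h)^G = \{1\} \times h^H$ while $\alpha(1,h) = (f(\bar h), h)$ with $f(\bar h) \ne 1$ for some $h$. Your version is more self-contained --- it does not depend on the strict inequality $|\Autcent(\K \times \A)| > |\Autcent(\K)||\Autcent(\A)|$, which the paper asserts without justification --- and it requires no case distinction on $p$. Two small remarks: you do not actually need the complement $H$ to be purely non-abelian, only that $A \ne 1$ and $H/\gamma_2(H) \ne 1$ (automatic for a non-trivial finite $p$-group), so that $\Hom(H/\gamma_2(H), A) \ne 0$; and your appeal to the fact that a proper quotient of a finite abelian group embeds as a proper subgroup is exactly what the paper packages as Lemma \ref{lemma7}.
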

\begin{proof}
We first prove that $G$ is purely non-abelian. Suppose the contrary, then
$G = \K \times \A$, where $\A$ is a non-trivial abelian subgroup of
$G$ and $K$ is purely non-abelian. Obviously $|\Aut_c(G)| = |\Aut_c(\K)|$. It follows that  $|\Autcent(G)| >
|\Autcent(K)||\Autcent(\A)| \ge |\Autcent(K)|$ (Notice that the second inequality is also strict if $p$ is odd, since $\A$ is non-trivial). Hence 
\[|\Aut_c(G)| = |\Aut_c(\K)| \le |\Autcent(K)| < |\Autcent(G)|.\] 
This is a contradiction to Hypothesis A. This proves that $G$ is purely
non-abelian. 

Now suppose that $\gamma_2(G) < \Z(G)$. Then $|G/\Z(G)| < |G/\gamma_2(G)|$. 
Notice that all the conditions of Lemma \ref{lemma7} hold with $H = G/\Z(G)$ and 
$K = G/\gamma_2(G)$. Thus $G/\Z(G)$ is isomorphic to a proper subgroup of 
$G/\gamma_2(G)$. It follows from Theorem \ref{lemma0} that $|\Autcent(G)| =
|\Hom(G/\gamma_2(G), \Z(G))|$, since $G$ is purely non-abelian. By  Proposition \ref{prop1}, we have 
$|\Aut_c(G)| \le |\Hom(G/\Z(G), \gamma_2(G))|$. Since  $G/\Z(G)$ is isomorphic 
to a proper subgroup of $G/\gamma_2(G)$ and $\gamma_2(G)$ is a proper 
subgroup of $\Z(G)$, if follows from Lemma \ref{lemma6} that
\[|\Hom(G/\Z(G), \gamma_2(G))| < |\Hom(G/\gamma_2(G), \Z(G))|.\]
Hence 
\[|\Aut_c(G)| \le \Hom(G/\Z(G), \gamma_2(G))| < |\Hom(G/\gamma_2(G), \Z(G))|
= |\Autcent(G)|.\]
This contradicts the fact that $G$ satisfies Hypothesis A. Hence $\gamma_2(G)
= \Z(G)$. This completes the proof of the proposition. \hfill $\Box$
\end{proof}

Let $G$ be a finite $p$-group of class $2$ such that $\Z(G) \le \Phi(G)$.  Then  ${\bar G} = G/\Z(G)$, being  finite abelian,  admits a minimal basis.  Let $\{{\bar x}_1, \ldots, {\bar x}_d\}$ be a minimal basis for ${\bar G}$, where ${\bar x}_i = x_i\Z(G)$. Now $G/\Phi(G) \cong (G/\Z(G))/(\Phi(G)/\Z(G)) \cong {\bar G}/\Phi({\bar G})$, since $\Z(G) \le \Phi(G)$. Thus the set $\{x_1 \Phi(G), \ldots,  x_d \Phi(G)\}$ minimally generates $G/\Phi(G)$, which implies that the set  $\{x_1, \ldots, x_d\}$ minimally generates $G$.  Now let $x \in G-\Phi(G)$. Therefore $x\Z(G) \in G/\Z(G) - \Phi(G)/\Z(G)$. Thus we can find a minimal basis $\{{\bar x}_1, \ldots, {\bar x}_d\}$ for ${\bar G}$ such that ${\bar x} = x\Z(G) = {\bar x}_i$ for some $1 \le i \le d$.  As a consequence of this discussion, we get the following result.
 
\begin{lemma}\label{lemma8a}
Let $G$ be a finite $p$-group of class $2$ such that $\Z(G) \le \Phi(G)$. Then the following holds true:
\begin{enumerate}
\item Any minimal basis $\{{\bar x}_1, \ldots, {\bar x}_d\}$ for ${\bar G}$ provides  a distinguished minimal generating set $\{x_1, \ldots, x_d\}$ for $G$.
\item Any element $x \in G-\Phi(G)$ can be included in a distinguished minimal generating set for $G$.
\end{enumerate}
\end{lemma}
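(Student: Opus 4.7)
The plan is to formalize the two-step argument already laid out in the paragraph just above the lemma statement; essentially nothing needs to be added beyond verifying one standard fact about abelian $p$-groups. The central observation behind both parts is that the hypothesis $\Z(G) \le \Phi(G)$ forces $\Phi(\br{G}) = \Phi(G)/\Z(G)$, where $\br{G}= G/\Z(G)$; this is a general property of the Frattini subgroup, which behaves well modulo any subgroup contained in it (since it is the intersection of maximal subgroups, all of which automatically contain $\Z(G)$). Consequently the canonical projection induces an isomorphism
\[
G/\Phi(G) \;\cong\; \br{G}/\Phi(\br{G}),
\]
sending $x\Phi(G)$ to $\br{x}\Phi(\br{G})$ for every $x \in G$.

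For part (1), I would start with a minimal basis $\{\br{x}_1,\ldots,\br{x}_d\}$ of $\br{G}$ and argue that the images $\br{x}_i \Phi(\br{G})$ form an $\mathbb{F}_p$-basis of the elementary abelian $p$-group $\br{G}/\Phi(\br{G})$. Pulling back through the isomorphism above, the set $\{x_i\Phi(G)\}$ forms a basis of $G/\Phi(G)$, so by Burnside's basis theorem $\{x_1,\ldots,x_d\}$ is a minimal generating set of $G$. Since $\{\br{x}_1,\ldots,\br{x}_d\}$ is by construction a minimal basis of $\br{G}=G/\Z(G)$, the generating set $\{x_1,\ldots,x_d\}$ is distinguished in the sense of the definition given at the end of Section~1.

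For part (2), given $x \in G - \Phi(G)$, the isomorphism implies $\br{x} = x\Z(G) \notin \Phi(\br{G})$. I would then invoke the following fact, which is the one genuinely non-formal ingredient: in a finite abelian $p$-group, every element outside the Frattini subgroup can be completed to a minimal basis. This is proved by essentially the same argument as Lemma~\ref{lemma5d} — extend $\br{x}$ to an $\mathbb{F}_p$-basis of $\br{G}/\Phi(\br{G})$, lift to generators of $\br{G}$ using Burnside, and, since $\br{x}$ has order equal to the exponent of its cyclic direct summand, rearrange the basis so that the orders are non-increasing. Once such a minimal basis containing $\br{x}$ is obtained, part (1) immediately produces a distinguished minimal generating set of $G$ containing $x$.

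The main (and only) obstacle is the auxiliary fact used in part (2): extending a non-Frattini element of a finite abelian $p$-group to a minimal basis respecting the invariant decomposition. This is folklore, but a clean proof goes through Lemma~\ref{lemma5d} applied inductively, choosing at each stage a maximal subgroup of $\br{G}$ containing $\br{x}$ and peeling off a direct cyclic summand complementary to $\br{x}$. Everything else in the lemma is a routine translation between $G/\Phi(G)$ and $\br{G}/\Phi(\br{G})$.
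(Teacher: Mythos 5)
Your overall route is the same as the paper's: the author's proof of this lemma is essentially the paragraph preceding its statement, which derives $G/\Phi(G)\cong \br{G}/\Phi(\br{G})$ from $\Z(G)\le\Phi(G)$, gets part (1) from Burnside's basis theorem, and for part (2) simply asserts that any non-Frattini element of the finite abelian $p$-group $\br{G}$ lies in some minimal basis. Your treatment of part (1) is correct and coincides with the paper's.

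The genuine gap is in the auxiliary fact you isolate for part (2) and in the justification you sketch for it. First, the sketch does not work as written: extending $\br{x}$ to an $\mathbb{F}_p$-basis of $\br{G}/\Phi(\br{G})$ and lifting via Burnside yields only a \emph{minimal generating set} of $\br{G}$, not a minimal basis in the sense defined in the introduction, which demands a direct decomposition $\br{G}=\gen{\br{y}_1}\times\cdots\times\gen{\br{y}_d}$; and the clause ``since $\br{x}$ has order equal to the exponent of its cyclic direct summand'' presupposes that $\gen{\br{x}}$ \emph{is} a direct summand, which is precisely what must be shown. Second, the fact itself is false for arbitrary finite abelian $p$-groups: in $Y=\gen{a}\times\gen{b}\cong\C_{p^3}\times\C_p$ the element $y=a^pb$ has order $p^2$ and lies outside $\Phi(Y)=\gen{a^p}$, yet it belongs to no minimal basis, since the invariants of $Y$ are $p^3$ and $p$; equivalently, $\gen{y}$ is not pure, as $y^p=a^{p^2}\in p^2Y$ while $y^p\neq 1$. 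What is actually required is that $\gen{\br{x}}$ be a pure (hence direct-summand) cyclic subgroup of $\br{G}$, and some input from the class-$2$ structure of $G$ (e.g.\ Lemma \ref{lemma6a}) or a restriction on the order of $\br{x}$ is needed to guarantee this. To be fair, the paper asserts the same fact with no argument at all, so you are no worse off than the author; but the proof you propose would not close this hole, and it is the one point of the lemma that genuinely needs an argument.
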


Since $\Z(G) = \gamma_2(G) \le \Phi(G)$ for a finite $p$-group of class $2$ satisfying Hypothesis A, we readily get
\begin{cor}\label{cor0}
Any finite $p$-group of class $2$ satisfying Hypothesis A, admits a distinguished minimal generating set.
\end{cor}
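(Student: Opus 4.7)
The plan is to combine Proposition \ref{prop2} with Lemma \ref{lemma8a}(1); the corollary is essentially a direct chaining of the two, once one verifies that the hypothesis of Lemma \ref{lemma8a} is satisfied. So the task reduces to showing $\Z(G) \le \Phi(G)$ for any finite $p$-group $G$ of class $2$ satisfying Hypothesis A.

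First I would invoke Proposition \ref{prop2} to conclude that $\gamma_2(G) = \Z(G)$. Second, I would use the standard identity $\Phi(G) = G^p\,\gamma_2(G)$ for any finite $p$-group $G$ (in fact $\gamma_2(G) \le \Phi(G)$ always holds, since $\Phi(G)$ contains every normal subgroup with elementary abelian quotient, and $G/\gamma_2(G)$ surjects onto the Frattini quotient). Combining these two facts gives $\Z(G) = \gamma_2(G) \le \Phi(G)$, which is exactly the hypothesis of Lemma \ref{lemma8a}.

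Having verified the hypothesis, the conclusion is immediate from part (1) of Lemma \ref{lemma8a}: starting from any minimal basis $\{\bar x_1, \ldots, \bar x_d\}$ of the finite abelian $p$-group $\bar G = G/\Z(G)$ (such a basis exists by the fundamental theorem of finite abelian groups), the corresponding lifts $\{x_1, \ldots, x_d\}$ in $G$ form a minimal generating set for $G$ whose images in $G/\Z(G)$ form a minimal basis; by definition this is a distinguished minimal generating set.

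There is really no obstacle here: the content has already been absorbed into Proposition \ref{prop2} (which carried all the real work, using Theorem \ref{lemma0}, Proposition \ref{prop1}, and Lemma \ref{lemma6}) and into Lemma \ref{lemma8a} (which is a direct consequence of the correspondence $G/\Phi(G) \cong \bar G/\Phi(\bar G)$ when $\Z(G) \le \Phi(G)$). The only thing to check carefully is the standard inclusion $\gamma_2(G) \le \Phi(G)$, which is textbook and needs only a one-line justification.
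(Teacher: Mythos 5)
Your proposal is correct and follows exactly the paper's own route: Proposition \ref{prop2} gives $\Z(G)=\gamma_2(G)$, the standard inclusion $\gamma_2(G)\le\Phi(G)$ then yields $\Z(G)\le\Phi(G)$, and Lemma \ref{lemma8a}(1) finishes. The paper compresses this into a single sentence preceding the corollary, so there is nothing to add.
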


\begin{prop}\label{prop3}
Let $G$ be a finite $p$-group of class $2$ satisfying Hypothesis A. Let  $\{x_1, \ldots, x_d\}$ be a distinguished minimal generating set for $G$. Then $|\Aut_c(G)| = \Pi_{i =1}^d |x_i^G|$ and $[x_i, G] = \Omega_{m_i}(\gamma_2(G))$, where $p^{m_i}$ is the order of ${\bar x}_i$. Moreover, $[x_1, G] = \gamma_2(G)$.
\end{prop}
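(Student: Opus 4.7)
The plan is to identify $\Aut_c(G)$ with a subgroup of $\Hom(G/\Z(G),\gamma_2(G))$, show that the size forces the inclusion to be an equality, and then extract the subgroups $[x_i,G]$ by plugging in homomorphisms supported on a single generator.

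First I would record the easy inclusions. By Proposition~\ref{prop2}, $\gamma_2(G)=\Z(G)$, and since $\bar x_i$ has order $p^{m_i}$ in $G/\Z(G)$, Lemma~\ref{lemma6a}(3) says $[x_i,G]$ has exponent $p^{m_i}$; being a subgroup of $\gamma_2(G)$, it lies in $\Omega_{m_i}(\gamma_2(G))$. In particular, since the exponent of $\gamma_2(G)$ equals that of $G/\Z(G)$ by Lemma~\ref{lemma6a}(1), we have $\Omega_{m_1}(\gamma_2(G))=\gamma_2(G)$.

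Next I would count. By Proposition~\ref{prop1}, $\Aut_c(G)\cong \Hom_c(G/\Z(G),\gamma_2(G))$, a subgroup of $\Hom(G/\Z(G),\gamma_2(G))$. Using the direct product decomposition $G/\Z(G)=\langle\bar x_1\rangle\times\cdots\times\langle\bar x_d\rangle$, together with Lemmas~\ref{lemma5a} and~\ref{lemma2.7},
\[
|\Hom(G/\Z(G),\gamma_2(G))| = \prod_{i=1}^d |\Hom(\C_{p^{m_i}},\gamma_2(G))| = \prod_{i=1}^d|\Omega_{m_i}(\gamma_2(G))|.
\]
By Theorem~\ref{thm1} (Theorem A), $|\Aut_c(G)|=\prod_{i=1}^d|\Omega_{m_i}(\gamma_2(G))|$. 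Hence the inclusion $\Hom_c(G/\Z(G),\gamma_2(G))\subseteq \Hom(G/\Z(G),\gamma_2(G))$ is actually an equality.

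Now I would exploit this equality pointwise. For each fixed $i$ and any $z\in\Omega_{m_i}(\gamma_2(G))$, the assignment $\bar x_i\mapsto z$, $\bar x_j\mapsto 1$ ($j\ne i$) extends to an element $f\in\Hom(G/\Z(G),\gamma_2(G))=\Hom_c(G/\Z(G),\gamma_2(G))$. By definition of $\Hom_c$ applied with $g=x_i$, we get $z=f(\bar x_i)\in[x_i,G]$. This shows $\Omega_{m_i}(\gamma_2(G))\subseteq[x_i,G]$, and combining with the reverse inclusion already proved gives $[x_i,G]=\Omega_{m_i}(\gamma_2(G))$. Specialising to $i=1$ yields $[x_1,G]=\gamma_2(G)$. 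Finally, since $|x_i^G|=|[x_i,G]|=|\Omega_{m_i}(\gamma_2(G))|$ for every $i$, Theorem~A gives
\[
|\Aut_c(G)|=\prod_{i=1}^d|\Omega_{m_i}(\gamma_2(G))|=\prod_{i=1}^d|x_i^G|,
\]
completing the proof. The only nontrivial step is the bookkeeping in the third paragraph, where the equality of orders forces the $\Hom_c$-constraint to be automatic and thereby lets us choose homomorphisms concentrated on a single $\bar x_i$; the rest is direct manipulation with exponents and distinguished bases.
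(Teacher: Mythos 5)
Your argument has one logical defect as written: you invoke Theorem~\ref{thm1} (Theorem A) to obtain $|\Aut_c(G)|=\prod_{i=1}^d|\Omega_{m_i}(\gamma_2(G))|$, but in the paper that direction of Theorem A is deduced from Corollary~\ref{cor1}, which is itself an immediate consequence of the very proposition you are proving, so the citation is circular. The gap is easily closed, because the equality you need is available without Proposition~\ref{prop3}: Hypothesis A gives $|\Aut_c(G)|=|\Autcent(G)|$; Proposition~\ref{prop2} gives $\Z(G)=\gamma_2(G)$, so $G$ is purely non-abelian and Theorem~\ref{lemma0} yields $|\Autcent(G)|=|\Hom(G/\gamma_2(G),\Z(G))|=|\Hom(G/\Z(G),\gamma_2(G))|$, which you have already computed to be $\prod_{i=1}^d|\Omega_{m_i}(\gamma_2(G))|$ via Lemmas~\ref{lemma5a} and~\ref{lemma2.7}. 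With that substitution the proof stands. (The second appeal to Theorem A in your final paragraph has the same problem but is redundant anyway once the displayed equality is in hand.)

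Once repaired, your route genuinely differs from the paper's in its second half. The paper sandwiches: it first notes $|\Aut_c(G)|\le\prod_i|x_i^G|$, since a class preserving automorphism has at most $|x_i^G|$ choices for the image of $x_i$, and then bounds $|\Autcent(G)|=\prod_i|\Hom(\gen{\bar x_i},\gamma_2(G))|\ge\prod_i|\Hom(\gen{\bar x_i},[x_i,G])|=\prod_i|[x_i,G]|=\prod_i|x_i^G|$, forcing equality throughout; comparing $|\Hom(\gen{\bar x_i},\gamma_2(G))|=|\Omega_{m_i}(\gamma_2(G))|$ with $|\Hom(\gen{\bar x_i},[x_i,G])|=|[x_i,G]|$ and using the exponent bound then gives $[x_i,G]=\Omega_{m_i}(\gamma_2(G))$. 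You instead upgrade the order count to the structural equality $\Hom_c(G/\Z(G),\gamma_2(G))=\Hom(G/\Z(G),\gamma_2(G))$ coming from Proposition~\ref{prop1}, and realize each $z\in\Omega_{m_i}(\gamma_2(G))$ as $f(\bar x_i)$ for a homomorphism supported on the single factor $\gen{\bar x_i}$, reading off $z\in[x_i,G]$ directly; this is legitimate since every such $z$ satisfies $z^{p^{m_i}}=1$ and so the assignment extends to the direct product. Your version is slightly more explicit (it exhibits the class preserving automorphism sending $x_i$ to $x_iz$) and avoids the counting inequality $|\Aut_c(G)|\le\prod_i|x_i^G|$, whereas the paper's sandwich delivers the product formula $|\Aut_c(G)|=\prod_i|x_i^G|$ as a byproduct. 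Both are sound.
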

\begin{proof}
Let $\{x_1, \ldots, x_d\}$ be a distinguished minimal generating set for $G$ such that  order of ${\bar x}_i$ is $p^{m_i}$ for $1 \le i \le d$. Notice that  $|\Aut_c(G)| \le \Pi_{i=1}^d |x_i^G|$, as there are not more than $|x_i^G|$ choices for the image of $x_i$ under any class preserving 
automorphism of $G$. Since the exponent of the subgroup $[x_i,G]$  is equal to 
the order of ${\bar x}_i = x_i\Z(G) \in G/\Z(G)$  for any  $1 \le i \le d$ (Lemma \ref{lemma6a}), it follows that 
$|\Hom(\gen{{\bar x}_i}, [x_i, G])| = |[x_i,G]|$. By Proposition \ref{prop2} we have $\Z(G) = \gamma_2(G)$. Thus  
\begin{eqnarray}
|\Aut_c(G)| &=& |\Autcent(G)| = |\Hom(G/\gamma_2(G), \Z(G))| 
= |\Hom(G/\Z(G), \gamma_2(G))|\nonumber\\ 
  &=& \Pi_{i=1}^d |\Hom(\gen{{\bar x}_i}, \gamma_2(G))|
\ge \Pi_{i=1}^d |\Hom(\gen{{\bar x}_i}, [x_i,G])|
= \Pi_{i=1}^d |[x_i, G]| \label{eqlemma9}\\
& =& \Pi_{i =1}^d |x_i^G|.\nonumber
\end{eqnarray}
Hence $|\Aut_c(G)| = \Pi_{i=1}^d |x_i^G|$.

 It now follows from \eqref{eqlemma9} that $\Hom(\gen{{\bar x}_i}, \gamma_2(G)) = \Hom(\gen{{\bar x}_i}, [x_i, G])$ for each $1 \le i \le d$. Notice that $\Hom(\gen{{\bar x}_i}, \gamma_2(G)) = \Hom(\gen{{\bar x}_i}, \Omega_{m_i}(\gamma_2(G))) \cong \Omega_{m_i}(\gamma_2(G))$ (Lemma \ref{lemma2.7}).  Also, as mentioned above, $\Hom(\gen{{\bar x}_i}, [x_i, G]) \cong [x_i, G]$. Hence $|[x_i, G]|  = |\Omega_{m_i}(\gamma_2(G))|$.  Since the exponent of $[x_i, G]$ is equal to the order of ${\bar x}_i$, it follows that $[x_i, G] \le \Omega_{m_i}(\gamma_2(G))$ for each $1 \le i \le d$. Hence $[x_i, G] = \Omega_{m_i}(\gamma_2(G))$ for each $1 \le i \le d$. Since the order of ${\bar x}_1$ (which is equal to the exponent of $G/\Z(G)$) is equal to the exponent of $\gamma_2(G)$, $[x_1, G] = \gamma_2(G)$. This  completes the proof of the proposition.         \hfill $\Box$
\end{proof}

In the following corollary we show that the order of $\Aut_c(G)$, obtained in Proposition \ref{prop3}, is independent of the choice of distinguished minimal generating set for $G$.

\begin{cor}\label{cor1}
Let $G$ be a finite $p$-group of class $2$ satisfying Hypothesis A. Let the invariants of $G/\Z(G)$ be $p^{m_1}, \ldots, p^{m_d}$. Then 
$|\Aut_c(G)| = \Pi_{i =1}^d |\Omega_{m_i}(\gamma_2(G))|$. 
\end{cor}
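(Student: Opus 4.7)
The plan is to read this off directly from Proposition \ref{prop3}; the corollary is really just asserting that the product formula in Proposition \ref{prop3} depends only on the isomorphism type of $G/\Z(G)$ and of $\gamma_2(G)$, not on the particular distinguished generating set chosen.

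First I would invoke Corollary \ref{cor0} to produce a distinguished minimal generating set $\{x_1, \ldots, x_d\}$ of $G$. By the definition of ``distinguished'', the set $\{\bar{x}_1, \ldots, \bar{x}_d\}$ is a minimal basis for the abelian group $G/\Z(G)$. Consequently the orders $p^{m_i} := |\bar{x}_i|$, arranged in decreasing order, are exactly the invariants of $G/\Z(G)$ in the sense defined in Section 1. (The fact that the number of such cyclic factors equals $d(G)$ is compatible with Proposition \ref{prop2}: since $\gamma_2(G) = \Z(G) \le \Phi(G)$, one has $G/\Phi(G) \cong (G/\Z(G))/\Phi(G/\Z(G))$, so the size of a minimal generating set for $G$ matches the number of invariants of $G/\Z(G)$.)

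Now Proposition \ref{prop3} gives both
\[|\Aut_c(G)| = \prod_{i=1}^{d} |x_i^G| \annd [x_i,G] = \Omega_{m_i}(\gamma_2(G))\]
for each $i$. Combined with the identity $|x_i^G| = |[x_i,G]|$ recorded in the introduction, substitution yields
\[|\Aut_c(G)| = \prod_{i=1}^{d} |[x_i,G]| = \prod_{i=1}^{d} |\Omega_{m_i}(\gamma_2(G))|,\]
which is exactly the claim. Since the right-hand side is expressed purely in terms of the invariants $p^{m_i}$ of $G/\Z(G)$ and the subgroups $\Omega_{m_i}(\gamma_2(G))$, the equality holds regardless of which distinguished minimal generating set is used to derive it.

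There is no real obstacle here: the only point needing care is the compatibility of indexing, namely that the integers $p^{m_i}$ produced by an arbitrary distinguished minimal generating set agree (after the forced decreasing rearrangement) with the canonically defined invariants of $G/\Z(G)$. This is immediate from the uniqueness clause in the definition of the invariants. Everything substantive has already been done in Propositions \ref{prop2} and \ref{prop3}.
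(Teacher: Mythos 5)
Your proof is correct and follows essentially the same route as the paper: both arguments produce a distinguished minimal generating set, identify the orders $p^{m_i}$ of the $\bar{x}_i$ with the invariants of $G/\Z(G)$, and then read the formula off from Proposition \ref{prop3} via $|x_i^G| = |[x_i,G]| = |\Omega_{m_i}(\gamma_2(G))|$. Your version merely spells out the substitution and the independence from the choice of generating set a little more explicitly than the paper does.
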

\begin{proof}
Let $p^{m_1}, \ldots, p^{m_d}$ be the  invariants of $G/\Z(G)$. Notice that any distinguished minimal generating set for $G$ can be written (after re-ordering if necessary) as $\{x_1, \ldots, x_d\}$ such that order of ${\bar x}_i$ is $p^{m_i}$ for $1 \le i \le d$. Hence, by Proposition \ref{prop3}, it follows that $|\Aut_c(G)| = \Pi_{i =1}^d |\Omega_{m_i}(\gamma_2(G))|$. This completes the proof.     \hfill  $\Box$
\end{proof}

Some other consequences of Proposition \ref{prop3}  are
\begin{cor}\label{cor2}
Let $G$ be a finite $p$-group of class $2$ satisfying Hypothesis A. Let $x \in G-\Phi(G)$ such that order of $x\Z(G)$ is $p^m$. Then $[x, G] = \Omega_m(\gamma_2(G))$. 
\end{cor}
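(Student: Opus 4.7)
The plan is to reduce to the case already handled by Proposition \ref{prop3}, using Lemma \ref{lemma8a}(2) to embed the given element $x$ into a distinguished minimal generating set of $G$. This should essentially dispose of the claim, since Proposition \ref{prop3} already computes $[x_i,G]$ for each member $x_i$ of any distinguished generating set.

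First I would verify that the hypothesis of Lemma \ref{lemma8a} is satisfied for our $G$. By Proposition \ref{prop2}, Hypothesis A forces $\Z(G) = \gamma_2(G)$, and since $G$ is nilpotent of class $2$ we have $\gamma_2(G) \le \Phi(G)$. Hence $\Z(G) \le \Phi(G)$, so Lemma \ref{lemma8a} applies. Next, because $x \in G - \Phi(G)$, Lemma \ref{lemma8a}(2) produces a distinguished minimal generating set $\{x_1,\ldots,x_d\}$ of $G$ with $x = x_j$ for some index $j$. By definition of distinguished, $\{\bar x_1,\ldots,\bar x_d\}$ is a minimal basis for $G/\Z(G)$, so the order of $\bar x_j$ is one of the invariants $p^{m_j}$ of $G/\Z(G)$. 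Since we are told that the order of $\bar x = x\Z(G) = \bar x_j$ is $p^m$, this forces $m_j = m$.

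Finally, Proposition \ref{prop3} applied to this distinguished generating set yields $[x_j,G] = \Omega_{m_j}(\gamma_2(G))$, which is exactly $[x,G] = \Omega_m(\gamma_2(G))$, as required. There is no real obstacle here: the content of the corollary is simply that the formula $[x_i,G] = \Omega_{m_i}(\gamma_2(G))$ extends from one chosen distinguished generating set to every non-Frattini element of $G$, and Lemma \ref{lemma8a}(2) is precisely the tool that makes this extension automatic.
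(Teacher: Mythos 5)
Your proof is correct and is essentially identical to the paper's own argument: both invoke Lemma \ref{lemma8a}(2) to place $x$ in a distinguished minimal generating set and then quote Proposition \ref{prop3}. Your explicit check that $\Z(G)\le\Phi(G)$ (via Proposition \ref{prop2} and class $2$) is a welcome detail the paper leaves implicit.
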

\begin{proof}
By Lemma \ref{lemma8a}, we can find a distinguished minimal generating set $\{x_1, \ldots, x_d\}$ for $G$ such that $x = x_i$ for some $1 \le i \le d$. Now the assertion follows from Proposition \ref{prop3}. \hfill $\Box$
\end{proof}

\begin{cor}
Let $G$ be a finite $p$-group of class $2$ satisfying Hypothesis A. Let $x \in G-\Phi(G)$ such that order of $x\Z(G)$ is $p^m$. Then $|\C_{G}(x)| = |G/\Z(G)| |\Omega^m(\gamma_2(G))|$.
\end{cor}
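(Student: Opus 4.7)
The plan is to compute $|\C_G(x)|$ via the orbit–stabilizer relation $|\C_G(x)| = |G|/|x^G|$, identify $|x^G|$ using the preceding corollary, and then use the fact that $\gamma_2(G)$ is a finite abelian $p$-group (since $G$ has class $2$) to relate $|\Omega_m(\gamma_2(G))|$ and $|\Omega^m(\gamma_2(G))|$.

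First, I would invoke Proposition \ref{prop2} to replace $\Z(G)$ by $\gamma_2(G)$ wherever convenient; in particular $|\Z(G)| = |\gamma_2(G)|$, so $|G/\Z(G)| = |G|/|\gamma_2(G)|$. Next, since $x \in G - \Phi(G)$ with $x\Z(G)$ of order $p^m$, the hypotheses of Corollary \ref{cor2} are satisfied, giving $[x,G] = \Omega_m(\gamma_2(G))$. Using the standard identity $|x^G| = |[x,G]|$ noted in the introduction, we obtain
\[
|\C_G(x)| \;=\; \frac{|G|}{|x^G|} \;=\; \frac{|G|}{|\Omega_m(\gamma_2(G))|}.
\]

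The key auxiliary point is then the identity $|\gamma_2(G)| = |\Omega_m(\gamma_2(G))| \cdot |\Omega^m(\gamma_2(G))|$. For this, observe that $\gamma_2(G)$ is abelian (as $G$ has class $2$), and so the $p^m$-th power map $\mu\colon \gamma_2(G) \to \gamma_2(G)$, $y \mapsto y^{p^m}$, is a group homomorphism. Its kernel consists of all elements of order dividing $p^m$, which in the abelian setting coincides with $\Omega_m(\gamma_2(G))$, and its image is precisely $\Omega^m(\gamma_2(G))$. The first isomorphism theorem then yields the displayed product decomposition.

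Substituting this back gives
\[
|\C_G(x)| \;=\; \frac{|G| \cdot |\Omega^m(\gamma_2(G))|}{|\gamma_2(G)|} \;=\; |G/\Z(G)| \cdot |\Omega^m(\gamma_2(G))|,
\]
as claimed. There is no real obstacle here; the only subtlety is noticing that the $\Omega_m$ in Corollary \ref{cor2} and the $\Omega^m$ in the desired conclusion are complementary sizes on an abelian $p$-group via the $p^m$-th power map, which reduces everything to a short chain of equalities.
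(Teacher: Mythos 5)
Your proof is correct and is essentially the derivation the paper intends: the corollary is stated there without proof as a consequence of Proposition \ref{prop3}, and your chain $|\C_{G}(x)| = |G|/|x^G| = |G|/|[x,G]| = |G|/|\Omega_m(\gamma_2(G))|$, combined with Corollary \ref{cor2} and the first isomorphism theorem applied to the $p^m$-th power map on the abelian group $\gamma_2(G)$ (kernel $\Omega_m(\gamma_2(G))$, image $\Omega^m(\gamma_2(G))$), is exactly the natural filling-in, with $\Z(G)=\gamma_2(G)$ from Proposition \ref{prop2} converting $|G|/|\gamma_2(G)|$ into $|G/\Z(G)|$. No gaps.
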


We now prove Theorem A.

\begin{thm}[Theorem A]\label{thm1}
Let $G$ be a finite $p$-group of nilpotency class $2$. Then $G$ satisfies
Hypothesis A if and only if $\Z(G) = \gamma_2(G)$ and 
$|\Aut_c(G)| = \Pi_{i =1}^d |\Omega_{m_i}(\gamma_2(G))|$, where $p^{m_1}, \ldots, p^{m_d}$ are the  invariants of $G/\Z(G)$.   
\end{thm}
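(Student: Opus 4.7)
The forward direction is essentially already done: if $G$ satisfies Hypothesis A, then Proposition \ref{prop2} supplies $\Z(G) = \gamma_2(G)$, and Corollary \ref{cor1} supplies the formula $|\Aut_c(G)| = \Pi_{i=1}^d |\Omega_{m_i}(\gamma_2(G))|$. So only the converse requires work, and the plan is to show that the hypotheses $\Z(G) = \gamma_2(G)$ and $|\Aut_c(G)| = \Pi_{i=1}^d |\Omega_{m_i}(\gamma_2(G))|$ force $|\Aut_c(G)| = |\Autcent(G)|$; combined with the inclusion $\Aut_c(G) \le \Autcent(G)$ (which holds for any class-$2$ group, since for $\phi \in \Aut_c(G)$ and $x \in G$ we have $x^{-1}\phi(x) \in [x,G] \le \gamma_2(G) \le \Z(G)$), this will yield Hypothesis A.

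Next, I would verify that under $\Z(G) = \gamma_2(G)$ the group $G$ is purely non-abelian, so Theorem \ref{lemma0} applies. Indeed, if $G = \K \times \A$ with $\A$ a nontrivial abelian direct factor, then $\Z(G) = \Z(\K) \times \A$ while $\gamma_2(G) = \gamma_2(\K) \le \K$, which contradicts $\Z(G) = \gamma_2(G)$ unless $\A$ is trivial. Hence Theorem \ref{lemma0} gives
\[
|\Autcent(G)| = |\Hom(G/\gamma_2(G), \Z(G))| = |\Hom(G/\Z(G), \gamma_2(G))|,
\]
where the second equality uses $\gamma_2(G) = \Z(G)$.

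Now I would decompose $G/\Z(G) \cong \C_{p^{m_1}} \times \cdots \times \C_{p^{m_d}}$ and apply Lemma \ref{lemma5a}(i) together with Lemma \ref{lemma2.7} to get
\[
|\Autcent(G)| = \prod_{i=1}^d |\Hom(\C_{p^{m_i}}, \gamma_2(G))| = \prod_{i=1}^d |\Omega_{m_i}(\gamma_2(G))|,
\]
where the last equality uses that $\Omega_{m_i}(\gamma_2(G))$ has exponent dividing $p^{m_i}$, so homomorphisms from $\C_{p^{m_i}}$ into it correspond bijectively to its elements. Combining this with the standing hypothesis $|\Aut_c(G)| = \Pi_{i=1}^d |\Omega_{m_i}(\gamma_2(G))|$ gives $|\Aut_c(G)| = |\Autcent(G)|$, and together with $\Aut_c(G) \le \Autcent(G)$ this forces equality.

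There is no real obstacle; the proof is essentially a bookkeeping exercise assembling Proposition \ref{prop2}, Corollary \ref{cor1}, Theorem \ref{lemma0}, and Lemma \ref{lemma2.7}. The only subtle point is remembering that the inclusion $\Aut_c(G) \le \Autcent(G)$ is automatic in class $2$ and that $\Z(G) = \gamma_2(G)$ by itself already forces pure non-abelianness, so that Adney--Yen is applicable in the reverse direction without any additional assumption.
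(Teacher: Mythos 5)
Your proposal is correct and follows essentially the same route as the paper: the forward direction is Proposition \ref{prop2} plus Corollary \ref{cor1}, and the converse uses pure non-abelianness to invoke Theorem \ref{lemma0}, decomposes $\Hom(G/\Z(G),\gamma_2(G))$ over the cyclic invariants, and applies Lemma \ref{lemma2.7} to conclude $|\Autcent(G)| = \Pi_{i=1}^d |\Omega_{m_i}(\gamma_2(G))| = |\Aut_c(G)|$. The extra details you supply (that $\Z(G)=\gamma_2(G)$ forces pure non-abelianness, and that $\Aut_c(G) \le \Autcent(G)$ in class $2$ so equal orders give equality) are exactly what the paper uses implicitly.
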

\begin{proof}
Let $G$ satisfy Hypothesis A. Then by Proposition \ref{prop2}, $\Z(G) = \gamma_2(G)$ and by Corollary \ref{cor1}, 
$|\Aut_c(G)| = \Pi_{i =1}^d |\Omega_{m_i}(\gamma_2(G))|$.

On the other hand, suppose that $\Z(G) = \gamma_2(G)$ and 
$|\Aut_c(G)| =  \Pi_{i =1}^d |\Omega_{m_i}(\gamma_2(G))|$.  Let $\{x_1, \ldots, x_d\}$ be a distinguished minimal generating set  for $G$ such that  order of ${\bar x}_i = x_i\Z(G)$ is $p^{m_i}$ for $1 \le i \le d$.
Since $\Z(G) = \gamma_2(G)$, $G$ is purely non-abelian and therefore
\begin{eqnarray*}
|\Autcent(G)| &=& \Pi_{i=1}^d |\Hom(\gen{{\bar x}_i}, \Z(G))| = \Pi_{i=1}^d |\Hom(\gen{{\bar x}_i}, \Omega_{m_i}(\gamma_2(G))|\\
& =& \Pi_{i =1}^d |\Omega_{m_i}(\gamma_2(G))| = |\Aut_c(G)|.
\end{eqnarray*}
Hence $G$ satisfies hypothesis A. This completes the proof of the theorem. \hfill  $\Box$
\end{proof}

Let $\Z(G) = \Z_1 \times \Z_2 \times \cdots \times \Z_r$ be a cyclic
decomposition of $\Z(G)$ such that $|\Z_1| \ge |\Z_2| \ge \dots \ge |\Z_r| > 1$ and, for each  $i$ such that $1 \le i \le r$, define 
$\Z_i^* =\Z_1 \times \cdots \times \Z_{i-1} \times \Z_{i+1}  \times \cdots \times \Z_r$. 

\begin{prop}\label{prop4}
Let $G$ be a finite $p$-group of class $2$ satisfying Hypothesis A. Let $x \in
G - \Phi(G)$. Then $[x,G] \cap \Z_i \neq 1$ for each $i$ such that $1 \le i \le r$. Moreover, if $|\Z_i|$ is equal to the exponent of $\Z(G)$, then 
$|[x,G] \cap \Z_i|$ is equal to the exponent of  $[x,G]$ and $G/\Z_i^*$ satisfies Hypothesis A.
\end{prop}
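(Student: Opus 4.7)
The first two assertions will follow directly from Corollary \ref{cor2}. For $x \in G - \Phi(G)$ with ${\bar x}$ of order $p^m$ in $\bar G$, that corollary gives $[x,G] = \Omega_m(\gamma_2(G)) = \Omega_m(\Z(G))$; since $\Omega_m$ distributes over the direct decomposition $\Z(G) = \Z_1 \times \cdots \times \Z_r$, one obtains $[x,G] \cap \Z_i = \Omega_m(\Z_i)$, which is non-trivial because $m \ge 1$ and $\Z_i \neq 1$. When additionally $|\Z_i|$ equals the exponent $p^{m_1}$ of $\Z(G)$ (which coincides with $\exp \bar G$ by Lemma \ref{lemma6a}), the factor $\Z_i \cong \C_{p^{m_1}}$ is cyclic, so $|\Omega_m(\Z_i)| = p^m$ while $\exp [x,G] = \exp \Omega_m(\Z(G)) = p^m$, proving the equality.

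For the third assertion I set $H = G/\Z_i^*$ and verify both conditions of Theorem A for $H$. Observe that $\gamma_2(H) = \gamma_2(G)/\Z_i^* \cong \Z_i$ is cyclic of order $p^{m_1}$. The crucial input is that Hypothesis A for $G$, combined with Theorem \ref{lemma0} and Proposition \ref{prop1}, forces every $f \in \Hom(\bar G, \Z(G))$ to be class-preserving in $G$: $f({\bar g}) \in [g,G]$ for all $g \in G$. To prove $\Z(H) = \gamma_2(H)$, I suppose for contradiction that some $g \in G - \Z(G)$ satisfies $[g,G] \le \Z_i^*$. Writing ${\bar g} = {\bar y}_1^{a_1} \cdots {\bar y}_d^{a_d}$ in a minimal basis $\{{\bar y}_1, \ldots, {\bar y}_d\}$ of $\bar G$, some $a_{j_0}$ is not divisible by $p^{m_{j_0}} = |{\bar y}_{j_0}|$; since $\Z_i \cong \C_{p^{m_1}}$ with $m_1 \ge m_{j_0}$, sending ${\bar y}_{j_0}$ to an element of order $p^{m_{j_0}}$ in $\Z_i$ and ${\bar y}_k$ to $1$ for $k \neq j_0$ produces an $f \in \Hom(\bar G, \Z_i) \subseteq \Hom(\bar G, \Z(G))$ with $f({\bar g}) \neq 1$. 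Then $f({\bar g}) \in [g,G] \cap \Z_i \le \Z_i^* \cap \Z_i = 1$, a contradiction.

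For the counting condition, $\bar H = \bar G$ has invariants $p^{m_1}, \ldots, p^{m_d}$ and $|\Omega_{m_j}(\gamma_2(H))| = |\Omega_{m_j}(\Z_i)| = p^{m_j}$, giving $\prod_j |\Omega_{m_j}(\gamma_2(H))| = |\bar G|$. Proposition \ref{prop1} yields $|\Aut_c(H)| \le |\Hom(\bar H, \gamma_2(H))| = |\Hom(\bar G, \Z_i)| = |\bar G|$, and equality holds because each $f \in \Hom(\bar G, \Z_i) \subseteq \Hom(\bar G, \Z(G))$, being class-preserving in $G$, descends under $\Z(G)/\Z_i^* \cong \Z_i$ to a class-preserving map in $H$ (indeed $f({\bar g}) \in [g,G]$ forces $f({\bar g})\Z_i^* \in [g,G]\Z_i^*/\Z_i^* = [x,H]$ for $x = g\Z_i^*$). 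Theorem A then gives that $H$ satisfies Hypothesis A. The main obstacle is the construction of the separating homomorphism $f$ in the previous paragraph, which crucially exploits $|\Z_i| = \exp \Z(G)$.
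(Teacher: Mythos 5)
Your treatment of the first two assertions coincides with the paper's: both rest on Corollary \ref{cor2} together with the identity $[x,G]\cap \Z_i=\Omega_m(\Z_i)$ and the equality $\exp\Z(G)=\exp(G/\Z(G))$. For the main assertion --- that $G/\Z_i^*$ satisfies Hypothesis A --- you take a genuinely different route, and it is correct. The paper first proves $\Z(G/\Z_i^*)=\Z(G)/\Z_i^*$ by an element-wise argument: a non-central $x\in\Phi(G)$ is written as $y^jz$ with $y\notin\Phi(G)$ and $z\in\Z(G)$, and the equality $|[y,G]\cap\Z_i|=\exp[y,G]$ from the first part is used to show $[y,G]^j\not\subseteq\Z_i^*$; it then concludes by observing that a class-$2$ group whose centre is cyclic and equal to its derived subgroup satisfies $\Autcent=\Inn=\Aut_c$, so Hypothesis A is automatic for the quotient. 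You instead extract from Hypothesis A, via the Adney--Yen correspondence (Theorem \ref{lemma0}) and Proposition \ref{prop1}, the reformulation that \emph{every} $f\in\Hom({\bar G},\Z(G))$ satisfies $f({\bar g})\in[g,G]$, and then produce a separating homomorphism into $\Z_i$ --- which exists precisely because $|\Z_i|=\exp{\bar G}$ --- to rule out $[g,G]\le\Z_i^*$ for non-central $g$; finally you re-verify the numerical criterion of Theorem \ref{thm1} for the quotient by descending homomorphisms from $\Hom({\bar G},\Z_i)$ (the descent is injective since $\Z_i\cap\Z_i^*=1$, a point you leave implicit but which is immediate). Your centre computation avoids both the paper's decomposition $x=y^jz$ and the unproved fact about cyclic centres, at the modest cost of having to check the counting condition separately; the ``every homomorphism is automatically class-preserving'' device you rely on is the same one the paper itself deploys later in Example 2, so your argument fits naturally into its framework.
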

\begin{proof}
Since $G$ satisfies Hypothesis A, $\Z(G) = \gamma_2(G)$. We'll make use of this fact without telling so.
Let $x \in G-\Phi(G)$ be such that order of $x\Z(G)$ is $p^m$, where $m \ge 1$ is an integer. Then it follows from Corollary \ref{cor2} that $[x, G] = \Omega_m(\gamma_2(G)) = \Omega_m(\Z(G))$. This proves that $[x,G] \cap \Z_i \neq 1$ for each  $1 \le i \le r$. 

Let $|\Z_i| = p^e$, the exponent of $\Z(G)$. Since the exponent of $[x,G]$ is equal to $|\gen{\bar{x}}|$, where $\bar{x} = x\Z(G)$ (Lemma \ref{lemma6a}), the  exponent of $[x,G]$ is $p^m$.  Thus $e \ge m$. Since $[x, G] = \Omega_m(\Z(G))$, it follows that $[x,G] \cap \Z_i = \Omega_m(\Z_i)$. Hence $|[x,G] \cap \Z_i| = p^m$.

Again let $|\Z_i| = p^e$, the exponent of $\Z(G)$. Assume for a moment that $\Z(G/\Z_i^*) = \Z(G)/\Z_i^*$. Then 
$\Z(G/\Z_i^*) \cong \Z_i$ is cyclic and is equal to $\gamma_2(G/\Z_i^*) =  
\gamma_2(G)/\Z_i^*$, since $\Z_i^* \le \gamma_2(G)$. This implies that
$\Autcent(G/\Z_i^*) = \Inn(G/\Z_i^*) = \Aut_c(G/\Z_i^*)$. Hence   Hypothesys A holds true for $G/\Z_i^*$. Therefore, to complete the proof, it is sufficient to
prove what we have assumed, i.e., $\Z(G/\Z_i^*) = \Z(G)/\Z_i^*$.

Let $x\Z_i^* \in G/\Z_i^* - \Z(G)/\Z_i^*$. Then $x \in G - \Z(G)$. If $x \in G
- \Phi(G)$, then  $[x,G] \not\subseteq \Z_i^*$ and therefore 
$x\Z_i^* \not\in \Z(G/\Z_i^*)$. So let $x \in \Phi(G) - \Z(G)$. Then there
exists an element $y \in G - \Phi(G)$ such that $x = y^jz$ for some positive
integer $j$ and some element $z \in \Z(G)$. Now 
\[[x,G] = [y^jz, G] = [y,G]^j, \;\; 1 \le j < \text{exponent of}\;\; [y,G].\]
Since $y \in G-\Phi(G)$, $|[y,G] \cap \Z_i|$ is equal to the exponent of  $[y,G]$. So it follows that $[y,G]^j \not\subseteq
\Z_i^*$ for any non-zero $j$ which is strictly less than the exponent of
$[y,G]$.  Thus $[x, G] \not\subseteq \Z_i^*$. This implies that $[x\Z_i^*,
G/\Z_i^*] \neq 1$. Hence $x\Z_i^* \not\in \Z(G/\Z_i^*)$. This proves that
$Z(G/\Z_i^*) = \Z(G)/\Z_i^*$, completing  the proof of the proposition. \hfill $\Box$
\end{proof}

For the proof of our next result we need the following important result.
\begin{thm}[\cite{BBC}, Theorem 2.1]\label{thm0a}
Let $G$ be a finite $p$-group of nilpotency class $2$ with cyclic center.  Then $G$ is a central product either of  two generator subgroups with cyclic center  or   two generator subgroups with cyclic center and a cyclic subgroup.
\end{thm}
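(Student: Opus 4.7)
The plan is to build the central product decomposition one two-generator piece at a time, exploiting the fact that $\gamma_2(G) \le \Z(G)$ is cyclic (as a subgroup of the cyclic group $\Z(G)$). Since the class-$2$ commutator subgroup is generated by commutators $[g,h]$ with $g,h \in G$, and since $\gamma_2(G)$ is cyclic, there must exist $x_1,y_1 \in G$ with $[x_1,y_1]$ a generator of $\gamma_2(G)$. I would set $H_1 := \gen{x_1,y_1}$ and aim to show $G = H_1 \cdot \C_G(H_1)$, that both factors have cyclic center, and that their intersection lies inside $\Z(G)$; then I would recurse on $\C_G(H_1)$.

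The crucial step is showing $G = H_1 \cdot \C_G(H_1)$. Given $g \in G$, the elements $[g,x_1]$ and $[g,y_1]$ both belong to $\gamma_2(G) = \gen{[x_1,y_1]}$, so I may write $[g,x_1] = [x_1,y_1]^{-t}$ and $[g,y_1] = [x_1,y_1]^{s}$ for integers $s,t$. Using the class-$2$ identities $[ab,c] = [a,c][b,c]$ and $[a,bc] = [a,b][a,c]$, the element $h := x_1^{s} y_1^{t}$ of $H_1$ satisfies $[h,x_1] = [g,x_1]$ and $[h,y_1] = [g,y_1]$, so $g h^{-1}$ centralizes both generators of $H_1$ and therefore lies in $\C_G(H_1)$. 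This gives $g \in H_1 \cdot \C_G(H_1)$ as required. Once $G = H_1 \C_G(H_1)$ is known, any element of $H_1$ that commutes with $H_1$ automatically commutes with all of $G$; hence $\Z(H_1) = H_1 \cap \C_G(H_1) \le \Z(G)$, which is cyclic. The identical reasoning shows that $\Z(\C_G(H_1)) = \Z(G)$, so $\C_G(H_1)$ is again a class-$\le 2$ $p$-group with cyclic center.

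Now I would induct on $|G|$. If $\C_G(H_1)$ is non-abelian, the induction hypothesis decomposes it as a central product of two-generator subgroups with cyclic center (possibly together with a cyclic subgroup), and composing with $H_1$ yields the required central product decomposition of $G$, since $H_1$ and $\C_G(H_1)$ commute and intersect inside the cyclic group $\Z(G)$. If instead $\C_G(H_1)$ is abelian, every element of $\C_G(H_1)$ commutes with both $H_1$ and with $\C_G(H_1) = G/H_1$-part, hence with all of $G = H_1 \C_G(H_1)$; this forces $\C_G(H_1) \le \Z(G)$, and combined with the trivial inclusion $\Z(G) \le \C_G(H_1)$ we get $\C_G(H_1) = \Z(G)$. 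Then $G = H_1 \cdot \Z(G)$, which is either $H_1$ itself (when $\Z(G) \le H_1$, the first alternative of the theorem) or the central product of $H_1$ with the cyclic group $\Z(G)$ (the second alternative).

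The main obstacle is the existence statement inside the commutator calculation: one has to justify that the exponents $s,t$, well-defined modulo the order of $[x_1,y_1]$, can be realized as exponents of $x_1$ and $y_1$ in an element of $H_1$. Since $[x_1,y_1]$ generates $\gamma_2(G)$, its order equals the exponent of $\gamma_2(G)$, which by Lemma \ref{lemma6a}(3) divides both the orders of $\bar{x}_1$ and $\bar{y}_1$ in $G/\Z(G)$; hence modifying $s,t$ by multiples of those orders only changes $h$ by an element of $\Z(G) \cap H_1 = \Z(H_1)$, which does not affect the commutators $[h,x_1]$ and $[h,y_1]$. Once this compatibility is verified, all remaining steps are formal consequences of class-$2$ commutator calculus and the central-product formalism.
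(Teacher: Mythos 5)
The paper does not prove this statement at all --- it is imported verbatim from \cite{BBC}, Theorem 2.1 --- so there is no internal proof to compare with; your proposal is a correct, self-contained elementary argument. The strategy (peel off one non-abelian two-generator factor at a time) works exactly as you describe: a cyclic $p$-group cannot be generated by non-generators, so a single commutator $[x_1,y_1]$ generates $\gamma_2(G)$; the class-$2$ identities let you solve $[h,x_1]=[g,x_1]$ and $[h,y_1]=[g,y_1]$ with $h=x_1^{s}y_1^{t}\in H_1$, giving $G=H_1\C_{G}(H_1)$; since the two factors centralize each other, $\Z(H_1)=H_1\cap\C_{G}(H_1)\le\Z(G)$ and $\Z(\C_{G}(H_1))=\Z(G)$ are cyclic, and induction on $|G|$ (with strict descent because $x_1\notin\C_{G}(H_1)$) finishes, the abelian terminal case yielding $\C_{G}(H_1)=\Z(G)$ and hence at most one cyclic factor. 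The only criticism is that the ``main obstacle'' you isolate in your final paragraph is not an obstacle: any integers $s,t$ with $[g,x_1]=[x_1,y_1]^{-t}$ and $[g,y_1]=[x_1,y_1]^{s}$ already give $[x_1^{s}y_1^{t},x_1]=[x_1,y_1]^{-t}$ and $[x_1^{s}y_1^{t},y_1]=[x_1,y_1]^{s}$ by direct computation, so no compatibility of representatives needs checking; moreover the divisibility you invoke holds only because the orders of $\bar x_1$, $\bar y_1$ and $[x_1,y_1]$ all coincide with the exponent of $G/\Z(G)$ (as $\gen{[x_1,y_1]}=\gamma_2(G)$ forces $[x_1,G]=\gamma_2(G)$), a fact you do not actually need. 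What your argument buys, compared with the citation, is a short direct proof that keeps the paper self-contained and avoids the variety-theoretic setting of \cite{BBC}.
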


Now we are ready to prove Theorem B.
\begin{thm}[Theorem B]\label{thm2}
Let $G$ be a finite $p$-group of class $2$ satisfying Hypothesis A. Then $d(G)$ is  even.
\end{thm}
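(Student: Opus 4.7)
The plan is to reduce to the case where $\Z(G)$ is cyclic and then invoke the structure theorem \ref{thm0a}. For the reduction, write $\Z(G) = \Z_1 \times \cdots \times \Z_r$ with $|\Z_1| \ge \cdots \ge |\Z_r| > 1$, so that $|\Z_1|$ equals the exponent of $\Z(G)$. By Proposition \ref{prop4}, the quotient $\bar G := G/\Z_1^*$ satisfies Hypothesis A, and $\Z(\bar G) \cong \Z_1$ is cyclic. Since $\Z_1^* \le \Z(G) = \gamma_2(G) \le \Phi(G)$, we have $\Phi(\bar G) = \Phi(G)/\Z_1^*$, so $d(\bar G) = d(G)$. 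Hence it suffices to prove the theorem under the additional assumption that $\Z(G)$ is cyclic.

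Assuming $\Z(G)$ is cyclic, I would apply Theorem \ref{thm0a} to write $G = H_1 * H_2 * \cdots * H_k * C$ as a central product, where each $H_i$ is $2$-generator with common amalgamated cyclic center $Z$, and $C$ is either trivial or a cyclic subgroup. Since $G$ is non-abelian, $k \ge 1$; and since each $H_i$ is $2$-generator with cyclic center, $H_i$ itself is non-abelian (otherwise $H_i = \Z(H_i) = Z$ would be cyclic, contradicting $d(H_i) = 2$). Consequently $H_i/Z = H_i/\Z(H_i)$ has rank exactly $2$ for each $i$.

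Setting $K = H_1 * \cdots * H_k$, the amalgamation yields
\[
K/Z \;\cong\; \prod_{i=1}^k H_i/Z,
\]
which has rank $2k$. The cyclic subgroup $C$ lies in $\Z(G)$, since it is abelian and centralizes $K$ by the central-product hypothesis, so $\Z(G) = ZC$; moreover $K \cap C \le \Z(K) = Z$ because $C$ centralizes $K$, which gives $K \cap ZC = Z$. The second isomorphism theorem then yields $G/\Z(G) = KC/ZC \cong K/Z$, of rank $2k$. Since $\Z(G) = \gamma_2(G) \le \Phi(G)$, we conclude $d(G) = d(G/\Z(G)) = 2k$, which is even.

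The step requiring most care is the final counting: one must ensure that the potentially non-trivial cyclic factor $C$ in Theorem \ref{thm0a} does not disturb the $2k$-count of generators. This is resolved by the observation $C \le \Z(G) \le \Phi(G)$, so $C$ contributes nothing to $G/\Phi(G)$, combined with the precise identification $G/\Z(G) \cong K/Z$ via the second isomorphism theorem.
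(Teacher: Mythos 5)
Your proposal is correct and follows essentially the same route as the paper: reduce to cyclic center via Proposition \ref{prop4} and the quotient $G/\Z_i^*$, invoke Theorem \ref{thm0a}, and count the rank of $G/\Z(G)$ as twice the number of two-generator central factors. Your treatment is in fact slightly more careful than the paper's at one point --- you explicitly absorb the possible extra cyclic factor $C$ into $\Z(G)\le\Phi(G)$ and identify $G/\Z(G)\cong K/Z$ via the second isomorphism theorem, whereas the paper dismisses that factor by appealing to $G^*$ being purely non-abelian.
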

\begin{proof}
Let $|\Z_i|$ be equal to the exponent of $\Z(G)$ and  $G^*$ denote the factor group $G/\Z_i^*$. Then it follows from Proposition \ref{prop4} that $\Z(G^*) = \gamma_2(G^*)$ is cyclic of order 
$|\Z_i|$. Since $G^*$ is purely non-abelian,  it follows from Theorem \ref{thm0a} that $G$ is a central product of  two generator subgroups with cyclic center.  Hence $G^*/\Z(G^*) \cong G/\Z(G)$ is a direct product of even number of cyclic subgroups of  $G^*/\Z(G^*)$. Thus $d(G/\Z(G))$ is even. Since $\Z(G) = \gamma_2(G)$, it follows that $d(G)$ is  even. \hfill $\Box$
\end{proof}

\section{Some further properties and examples}

In this section we discuss some more properties and give some examples of finite groups which satisfy Hypothesis A. We start with the 
following concept of isoclinism of groups,  introduced by P. Hall \cite{pH40}.

Let $X$ be a finite group and $\bar{X} = X/\Z(X)$. 
Then commutation in $X$ gives a well defined map
$a_{X} : \bar{X} \times \bar{X} \mapsto \gamma_{2}(X)$ such that
$a_{X}(x\Z(X), y\Z(X)) = [x,y]$ for $(x,y) \in X \times X$.
Two finite groups $G$ and $H$ are called \emph{isoclinic} if 
there exists an  isomorphism $\phi$ of the factor group
$\bar G = G/\Z(G)$ onto $\bar{H} = H/\Z(H)$, and an isomorphism $\theta$ of
the subgroup $\gamma_{2}(G)$ onto  $\gamma_{2}(H)$
such that the following diagram is commutative
\[
 \begin{CD}
   \bar G \times \bar G  @>a_G>> \gamma_{2}(G)\\
   @V{\phi\times\phi}VV        @VV{\theta}V\\
   \bar H \times \bar H @>a_H>> \gamma_{2}(H).
  \end{CD}
\]
The resulting pair $(\phi, \theta)$ is called an \emph{isoclinism} of $G$ 
onto $H$. Notice that isoclinism is an equivalence relation among finite 
groups.  

Let $G$ be a finite $p$-group. Then it follows from \cite{pH40} that there exists a finite $p$-group $H$ in the isoclinism family of $G$ such that 
$\Z(H) \le \gamma_2(H)$. Such a group $H$ is called a \emph{stem group} in the isoclinism family of $G$.  

The following theorem shows that the group of class preserving automorphisms is independent of the choice of a group in a given isoclinism family of groups.

\begin{thm}[\cite{mYp5}, Theorem 4.1]\label{thm2a}
Let $G$ and $H$ be two finite non-abelian isoclinic groups. Then
$\Aut_c(G) \cong \Aut_c(H)$.
\end{thm}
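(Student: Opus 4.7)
The plan is to realize $\Aut_c(G)$ as a set of constrained maps $\bar G\to\gamma_2(G)$ whose definition uses only the isoclinism data $(\bar G,\gamma_2(G),a_G)$, and then to transport these maps along $(\phi,\theta)$.

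For $\alpha\in\Aut_c(G)$ I would set $f_\alpha(g)=g^{-1}\alpha(g)$. Class preservation gives $f_\alpha(g)\in[g,G]\subseteq\gamma_2(G)$, and the fact that $\alpha$ fixes $\Z(G)$ pointwise (singleton classes) forces $f_\alpha(gz)=f_\alpha(g)$ for $z\in\Z(G)$; hence $f_\alpha$ descends to $\tilde f_\alpha:\bar G\to\gamma_2(G)$. The homomorphism property of $\alpha$ translates into the $1$-cocycle identity
\[
\tilde f_\alpha(\bar g\bar h) \;=\; h^{-1}\tilde f_\alpha(\bar g)h\cdot\tilde f_\alpha(\bar h),
\]
for the well-defined conjugation action of $\bar G$ on $\gamma_2(G)$. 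Conversely, any $\tilde f$ satisfying this identity together with $\tilde f(\bar g)\in[g,G]$ for all $g$ defines $\alpha_{\tilde f}(g)=g\tilde f(\bar g)$, which is an endomorphism of $G$ with trivial kernel (from $\alpha_{\tilde f}(g)=1$ one gets $g\in\gamma_2(G)$, whence $\bar g=1$ and then $g=\tilde f(\bar g)^{-1}=1$), and hence an automorphism in $\Aut_c(G)$. This identifies $\Aut_c(G)$ with the group $D(G)$ of such constrained cocycles under the natural cocycle product.

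Given the isoclinism $(\phi,\theta)$ I would define $\Psi:D(G)\to D(H)$ by $\Psi(\tilde f)=\theta\circ\tilde f\circ\phi^{-1}$. Membership of $\Psi(\tilde f)$ in $D(H)$ requires two compatibilities, both provided by the isoclinism square: $\theta$ sends $[g,G]=a_G(\bar g,\bar G)$ onto $[h,H]=a_H(\bar h,\bar H)$ whenever $\phi(\bar g)=\bar h$; and $\theta$ intertwines the $\bar G$- and $\bar H$-actions on the respective commutator subgroups, which one checks by writing $x\in\gamma_2(G)$ as a product $\prod_i[a_i,b_i]$, conjugating entry-wise, and applying the commutator diagram to each factor with consistent choices of lifts. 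The same recipe applied to $(\phi^{-1},\theta^{-1})$ produces a two-sided inverse to $\Psi$, and $\Psi$ manifestly respects the cocycle product, so it is the desired isomorphism $\Aut_c(G)\cong\Aut_c(H)$.

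The main obstacle is the second compatibility in the middle paragraph. It is vacuous for the nilpotency class $2$ groups that dominate the rest of the paper, because $\gamma_2(G)\le\Z(G)$ trivialises the conjugation action and the cocycle condition degenerates to ``$\tilde f$ is a homomorphism''; in that case the whole argument boils down to comparing two copies of $\Hom(\bar G,\gamma_2(G))$ via $\phi$ and $\theta$ and tracking the class-preserving constraint. For groups of higher class one must verify carefully that the lifts appearing in the expansion of $g^{-1}xg=\prod_i[g^{-1}a_ig,g^{-1}b_ig]$ can indeed be chosen to conjugate to the required lifts in $H$, so that the cocycle identity is preserved on the nose and not merely modulo some deeper term of the lower central series.
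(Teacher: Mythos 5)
Your overall strategy --- encoding $\Aut_c(G)$ as constrained crossed homomorphisms $\bar G\to\gamma_2(G)$ and transporting them along $(\phi,\theta)$ --- is the right one and is essentially how the theorem is proved in the cited reference \cite{mYp5}; the present paper does not reprove it, but records exactly the class-$2$ shadow of your correspondence as Proposition \ref{prop1}. One of your two worries is unfounded: the intertwining of the conjugation actions is not delicate. Since $\phi$ is a group isomorphism of $\bar G$ onto $\bar H$ it preserves commutators, so $\phi(y\Z(G))=\theta(y)\Z(H)$ for every $y\in\gamma_2(G)$; writing $x=\prod_i[a_i,b_i]$, using that $\theta$ is a homomorphism and applying the commuting square to each factor gives $\theta(u^{-1}xu)=v^{-1}\theta(x)v$ whenever $\phi(u\Z(G))=v\Z(H)$, with no constraint on the choice of lifts. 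The same compatibility is what makes $\Psi$ respect the twisted product $f_{\alpha\beta}(\bar g)=f_\alpha(\bar g)\,f_\beta(\bar g)\,f_\alpha\bigl(\overline{f_\beta(\bar g)}\bigr)$, which you should check explicitly rather than call manifest.

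The genuine gap is in the converse step of your identification of $\Aut_c(G)$ with $D(G)$: from $\alpha_{\tilde f}(g)=1$ you get $g\in\gamma_2(G)$ and then assert $\bar g=1$, which presupposes $\gamma_2(G)\le\Z(G)$, i.e.\ nilpotency class at most $2$. The theorem is stated for arbitrary finite non-abelian isoclinic groups; for class greater than $2$, or for non-nilpotent $G$, the inference fails, and iterating only yields $g\in\gamma_i(G)$ for all $i$, which does not force $g=1$ outside the nilpotent case. The standard repair is to prove surjectivity instead: the image of $\alpha_{\tilde f}$ is a subgroup meeting every conjugacy class of $G$, because $\alpha_{\tilde f}(g)\in g[g,G]=g^G$, and a proper subgroup $H<G$ of a finite group cannot meet every class, since $\bigl|\bigcup_{g\in G}g^{-1}Hg\bigr|\le [G:H](|H|-1)+1<|G|$. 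With that replacement your argument closes up; as written it only establishes the theorem for groups of class at most $2$, which happens to be all the paper ever uses but is not what is claimed.
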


\begin{prop}\label{prop5}
Let $G$ and $H$ be two non-abelian finite $p$-groups which are isoclinic. Let $G$ satisfy Hypothesis A. Then $H$ satisfies Hypothesis A if and only if $|H| = |G|$.
\end{prop}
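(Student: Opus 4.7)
The plan is to combine Theorem A (which gives a clean numerical criterion for Hypothesis A) with Theorem \ref{thm2a} (which says $\Aut_c$ is an isoclinism invariant) and the definition of isoclinism (which preserves $G/\Z(G)$ and $\gamma_2(G)$ up to isomorphism).

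First I would record the data available from isoclinism: there are isomorphisms $\phi\colon G/\Z(G) \to H/\Z(H)$ and $\theta\colon \gamma_2(G) \to \gamma_2(H)$, and from Theorem \ref{thm2a} we have $|\Aut_c(H)| = |\Aut_c(G)|$. Since $G$ satisfies Hypothesis A, Theorem \ref{thm1} yields $\Z(G)=\gamma_2(G)$ and
\[
|\Aut_c(G)| = \prod_{i=1}^{d}|\Omega_{m_i}(\gamma_2(G))|,
\]
where $p^{m_1},\dots,p^{m_d}$ are the invariants of $G/\Z(G)$. Through $\phi$ and $\theta$ these same exponents $m_i$ are the invariants of $H/\Z(H)$, and $\Omega_{m_i}(\gamma_2(G)) \cong \Omega_{m_i}(\gamma_2(H))$. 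Hence
\[
|\Aut_c(H)| = \prod_{i=1}^{d}|\Omega_{m_i}(\gamma_2(H))|
\]
\emph{regardless} of whether $H$ satisfies Hypothesis A. By the converse direction of Theorem \ref{thm1}, this reduces Hypothesis A for $H$ to the single condition $\Z(H)=\gamma_2(H)$.

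Next I would convert that condition into the order equality. Since $G$ and $H$ have nilpotency class $2$ (both are non-abelian and $\gamma_2(H)\cong\gamma_2(G)\neq 1$), we have $\gamma_2(H)\leq \Z(H)$, so $\Z(H)=\gamma_2(H)$ is equivalent to $|\Z(H)|=|\gamma_2(H)|$. On the one hand,
\[
|H| = |H/\Z(H)|\,|\Z(H)| = |G/\Z(G)|\,|\Z(H)|,
\]
while, using $\Z(G)=\gamma_2(G)$ together with $|\gamma_2(G)|=|\gamma_2(H)|$,
\[
|G| = |G/\Z(G)|\,|\gamma_2(G)| = |G/\Z(G)|\,|\gamma_2(H)|.
\]
Dividing, $|H|=|G|$ iff $|\Z(H)|=|\gamma_2(H)|$, i.e.\ iff $\Z(H)=\gamma_2(H)$. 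Combined with the previous step this gives the proposition in both directions.

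No genuine obstacle is visible: the substance sits in Theorems \ref{thm1} and \ref{thm2a}, and what remains is the bookkeeping above. The one subtlety to keep straight is that the equality $|\Aut_c(H)| = \prod|\Omega_{m_i}(\gamma_2(H))|$ holds automatically from the isoclinism, so the only condition of Theorem \ref{thm1} still to be verified for $H$ is $\Z(H)=\gamma_2(H)$, and this is precisely what $|H|=|G|$ detects.
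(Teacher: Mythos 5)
Your proof is correct and uses the same ingredients as the paper's own argument: Theorem \ref{thm1} applied to $H$, the isoclinism invariance of $\Aut_c$ from Theorem \ref{thm2a}, and the isomorphisms $G/\Z(G)\cong H/\Z(H)$ and $\gamma_2(G)\cong\gamma_2(H)$. The only difference is organizational: you run both directions through a single chain of equivalences and make explicit the step the paper dismisses as ``easy'' (that $|H|=|G|$ forces $\Z(H)=\gamma_2(H)$ because $\gamma_2(H)\le\Z(H)$ and the orders coincide), whereas the paper argues the two implications separately.
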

\begin{proof}
Suppose that $H$ satisfies Hypothesis A. Then $\gamma_2(H) = \Z(H)$. Since $G$ and $H$ are isoclinic and $G$ satisfies Hypothesis A, it follows that $|\Z(G)| = |\gamma_2(G)| =  |\gamma_2(H)| = |\Z(H)|$ and $|G/\Z(G)| = |H/\Z(H)|$. Hence $|H| = |H/\Z(H)| |\Z(H)| = |G/\Z(G)| |\Z(G)| = |G|$.

Conversely, suppose that $|H| = |G|$. It is easy to show that $\gamma_2(H) = \Z(H)$. Let $p^{m_1}, \ldots, p^{m_d}$ be the invariants of $G/\Z(G) \cong H/\Z(H)$. Since  $\gamma_2(H) \cong \gamma_2(G)$, we have $\Omega_{m_i}(\gamma_2(H)) \cong  \Omega_{m_i}(\gamma_2(G))$.  Since $G$ and $H$ are isoclinic, it follows from Theorem \ref{thm2a} that $\Aut_c(G) \cong \Aut_c(H)$.  Hence  $|\Aut_c(H)| =  \Pi_{i =1}^d |\Omega_{m_i}(\gamma_2(G))| = \Pi_{i =1}^d |\Omega_{m_i}(\gamma_2(H))|$. That $H$ satisfies Hypothesis A, now follows from Theorem \ref{thm1}. \hfill $\Box$
\end{proof}

Let $G$ be a finite group and $1 \neq N$ be a normal subgroup of
$G$. $(G,N)$ is called a \emph{Camina pair} if $xN \subseteq x^G$ for
all $x \in G-N$. A group $G$ is called a \emph{Camina group} if $(G,\gamma_{2}(G))$ 
is a  Camina pair. So if $G$ is a Camina group, then $x\gamma_2(G) \subseteq 
x^G$ all $x \in G-\gamma_2(G)$. This is equivalent to saying  that $\gamma_2(G) 
\subseteq [x, G]$ all $x \in G-\gamma_2(G)$. Since $[x, G] \subseteq
\gamma_2(G)$, it follows that $G$ is a  Camina group if and only if 
$\gamma_2(G)= [x, G]$ all $x \in G-\gamma_2(G)$.

\begin{prop}\label{prop5a}
Let $G$ be a finite special $p$-group. Then Hypothesis A holds true for $G$ if and only if $G$ is a Camina group. 
\end{prop}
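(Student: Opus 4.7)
The plan is to reduce the statement to Theorem \ref{thm1} (Theorem A) by exploiting the very rigid structure of a special $p$-group. Recall that $G$ special means $\Z(G) = \gamma_2(G) = \Phi(G)$ with this common subgroup elementary abelian; in particular $G/\Z(G)$ is elementary abelian as well. Hence the invariants of $G/\Z(G)$ are $p, p, \ldots, p$ (repeated $d = d(G)$ times), so every $m_i = 1$ and $\Omega_{m_i}(\gamma_2(G)) = \Omega_1(\gamma_2(G)) = \gamma_2(G)$. Theorem A therefore specializes to: $G$ satisfies Hypothesis A if and only if $|\Aut_c(G)| = |\gamma_2(G)|^d$. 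In the same spirit, because $\gamma_2(G) = \Phi(G)$, the Camina condition $[x,G] = \gamma_2(G)$ for $x \in G - \gamma_2(G)$ is the same as $[x,G] = \gamma_2(G)$ for every $x \in G - \Phi(G)$.

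For the forward direction I would assume $G$ is a Camina group and use Proposition \ref{prop1}, which identifies $\Aut_c(G)$ with $\Hom_c(G/\Z(G), \gamma_2(G))$. For any $f \in \Hom(G/\Z(G), \gamma_2(G))$ and any $g \in G$, the defining constraint $f(g\Z(G)) \in [g,G]$ is trivial when $g \in \Z(G) = \gamma_2(G)$ (both sides equal $1$), and reduces to $f(g\Z(G)) \in \gamma_2(G)$ when $g \notin \Z(G)$, by the Camina hypothesis $[g,G] = \gamma_2(G)$. Consequently $\Hom_c(G/\Z(G), \gamma_2(G)) = \Hom(G/\Z(G), \gamma_2(G))$, whose order is $|\gamma_2(G)|^d$ since both source and target are elementary abelian. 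Theorem A then yields Hypothesis A.

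For the backward direction I would assume Hypothesis A holds and invoke Corollary \ref{cor2} directly. Pick any $x \in G - \Phi(G)$; the order of $x\Z(G)$ in the elementary abelian group $G/\Z(G)$ is $p$, so Corollary \ref{cor2} gives $[x,G] = \Omega_1(\gamma_2(G)) = \gamma_2(G)$. Since $\Phi(G) = \gamma_2(G)$, this is exactly the Camina condition, finishing the proof.

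There is no real obstacle here: once one recognizes that in a special $p$-group every $\Omega_{m_i}(\gamma_2(G))$ collapses to $\gamma_2(G)$ and that the Camina property collapses $\Hom_c$ to $\Hom$, both implications follow immediately from tools already established (Theorem A, Proposition \ref{prop1}, and Corollary \ref{cor2}). The only point requiring a sliver of care is ensuring that the trivial case $g \in \Z(G)$ is handled correctly when identifying $\Hom_c$ with $\Hom$ in the forward direction.
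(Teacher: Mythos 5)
Your proof is correct, and one half of it genuinely diverges from the paper's. The direction ``Hypothesis A $\Rightarrow$ Camina'' is the same in both: pick $x \in G - \Phi(G)$, apply Corollary \ref{cor2} to get $[x,G] = \Omega_1(\gamma_2(G)) = \gamma_2(G)$, and use $\Phi(G) = \gamma_2(G)$ to see this is exactly the Camina condition. For the converse, however, the paper does not argue through $\Hom_c$ at all: it quotes an external result (Theorem 5.4 of the author's earlier paper \cite{mY07}) stating that a Camina group of class $2$ has $|\Aut_c(G)| = |\gamma_2(G)|^d$ with $p^d = |G/\Phi(G)|$, and then feeds that count into Theorem \ref{thm1}. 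You instead rederive that count on the spot: Proposition \ref{prop1} gives $\Aut_c(G) \cong \Hom_c(G/\Z(G), \gamma_2(G))$, and the Camina hypothesis $[g,G] = \gamma_2(G)$ for $g \notin \Z(G)$ (together with the trivial case $g \in \Z(G) = \gamma_2(G)$) makes the membership constraint vacuous, so $\Hom_c = \Hom$ and the order is $|\gamma_2(G)|^d$ because source and target are elementary abelian. Your route is self-contained within the paper's own toolkit and effectively reproves the needed special case of the cited theorem; the paper's route is shorter on the page but imports a black box. Both are valid, and your observation that the Camina property collapses $\Hom_c$ to $\Hom$ is exactly the right structural reason the count comes out as it does.
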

\begin{proof}
First suppose that Hypothesis A holds true for $G$. Since $\gamma_2(G) = \Phi(G)$, we only need to show that $[x, G] = \gamma_2(G)$ for all $x \in G -\Phi(G)$. Let $x \in G -\Phi(G)$.  By Corollary \ref{cor2}, we have $[x, G] = \Omega_m(\gamma_2(G))$, where order of $x\Z(G)$ in $G/\Z(G)$ is $p^m$. Since $G$ is special, $m =1$ and the exponent of $\gamma_2(G)$ is $p$. Thus  $[x, G] = \gamma_2(G)$.

Conversely suppose that $G$ is a Camina group. Then it follows from \cite[Theorem 5.4]{mY07} that $|\Aut_c(G)| = |\gamma_2(G)|^d$, where $|G/\Phi(G)| = p^d$. Since $G$  is special, it follows that $\gamma_2(G) = \Z(G)$ and $p^{m_1} =p, p^{m_2} =p , \ldots, p^{m_d} = p $ are the invariants of $G/\Z(G)$. Thus $\gamma_2(G) = \Omega_{m_i}(\gamma_2(G))$.   Now we can use  Theorem \ref{thm1} to deduce  that $G$ satisfies Hypothesis A and the proof is complete. \hfill $\Box$
\end{proof}

Since every finite Camina $p$-group of nilpotency class $2$ is special \cite{iM81}, we readily get  
\begin{cor}
Let $G$ be a finite Camina $p$-group of nilpotency class $2$. Then $G$ satisfies Hypothesis A.
\end{cor}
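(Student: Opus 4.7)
The plan is to assemble this corollary immediately from Proposition \ref{prop5a}, which has just been proved, together with a single structural fact about Camina $p$-groups. First I would invoke the cited result of Macdonald \cite{iM81}, which tells us that any finite Camina $p$-group of nilpotency class $2$ is necessarily a special $p$-group, i.e.\ $\Z(G) = \gamma_2(G) = \Phi(G)$ and both $\Z(G)$ and $G/\Z(G)$ are elementary abelian. This is the one external input the proof requires.

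Once $G$ is known to be special, Proposition \ref{prop5a} applies directly: it asserts that for a finite special $p$-group, Hypothesis A is \emph{equivalent} to being a Camina group. Since $G$ is Camina by assumption, we may read that equivalence in the ``backward'' direction to conclude that Hypothesis A holds. There is essentially no calculation left to do, and no choice of minimal generating set or auxiliary homomorphism group needs to be considered at this stage, because all of that work has been absorbed into the proof of Proposition \ref{prop5a}.

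The only conceivable obstacle is making sure that the hypothesis ``nilpotency class $2$'' is indeed needed to invoke \cite{iM81}; Camina groups of higher class exist, but we are explicitly restricting to class $2$, so the cited theorem applies cleanly. Thus the proposed proof is just a two-line chain: class-$2$ Camina $\Longrightarrow$ special (by \cite{iM81}) $\Longrightarrow$ Hypothesis A (by Proposition \ref{prop5a}).
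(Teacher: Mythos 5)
Your proof is correct and follows exactly the paper's own route: the paper likewise cites Macdonald \cite{iM81} for the fact that every finite Camina $p$-group of class $2$ is special, and then reads Proposition \ref{prop5a} in the backward direction. Nothing is missing.
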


With this much information we get

\noindent{\bf Example 1.}  All finite Camina $p$-groups of class $2$ satisfy Hypothesis A. In particular, all finite extra-special $p$-groups  satisfy Hypothesis A. Examples of non extra-special Camina $p$-groups of class $2$ can be found in  \cite{DS96} and \cite{iM81}.

Now we construct an example of  a finite $p$-group satisfying Hypothesis A, which is not a Camina group.

\noindent{\bf Example 2.} Let  $R$ be the factor ring $S/p^2S$, where $S$ is the ring of $p$-adic 
integers in the unramified extension of degree 2 over the $p$-adic 
completion $\mathbb Q_p$ of the rational numbers $\mathbb Q$. 
Form the group $G$ of all $3 \times 3$ matrices
\[   M(x,y,z) = \left (  \begin{matrix} 1 & 0 & 0 \\
                                        x & 1 & 0 \\
                                        z & y & 1
                         \end{matrix} \right )            \]
for $x,y,z \in R$. The additive group of $R$ is the direct
sum of two copies of a cyclic group of order $p^2$.  The factor ring
$R/pR$ modulo the ideal $pR$ is a finite field of order $p^2$. Notice 
that commutation in $G$ satisfies
\[[M(x,y,z), M(x',y',z')] = M(0,0,yx' - xy')\]
for any $x,y,z,x',y',z' \in R$.  So both the center $\Z(G)$ and the
derived group $\gamma_2(G)$ consist of all matrices of the form $M(0,0,z)$ 
for $z \in R$.  
Since $M(0,0,z)M(0,0,z') = M(0,0,z+z')$ for all $z, z' \in R$, $\Z(G)$
is noncyclic and equal to $\gamma_2(G)$. Thus the nilpotency class of $G$ is
$2$.  

From the above formula for commutators it follows that
\[[M(x,y,z), G] = M(0,0,Rx+Ry) :=  \{ M(0,0,z) \mid z \in Rx + Ry \}\]
for any $x,y,z \in R$.  Note that there are only three choices for the
ideal $Rx + Ry$ in $R$, namely, $R$, $pR$ and $p^2R = 0$.
Furthermore, all three possibilities happen for suitable $x$ and $y$.
Now
\[[M(1,0,0), G] = [M(p-1,0,0), G] = M(0,0,R) = \Z(G)\]
and
\[[M(1,0,0)M(p-1,0,0), G] = [M(p,0,0), G] = M(0,0,pR) = \Z(G)^p.\]
So
\[[M(1,0,0), G][M(p-1,0,0),G] = \Z(G) > \Z(G)^p = [M(1,0,0)M(p-1,0,0),
G].\] 
This shows that $G$ has a non-central element $x = M(1,0,0)M(p-1,0,0)$ such
that $[x,G] < \gamma_2(G) = \Z(G)$. Hence $G$ is not a Camina group.

Since $\Z(G) = \gamma_2(G)$, $G$ is purely non-abelian. 
Then from Lemma \ref{lemma0}, it follows that for any element $\alpha \in 
\Autcent(G)$ there exists a corresponding element 
$f_{\alpha} \in \Hom(G/\gamma_2(G),\Z(G))$ such that $f_{\alpha}({\bar x}) =
x^{-1} \alpha(x)$ for each ${\bar x} \in G/\gamma_2(G)$.
Let $x,y,z$ be any three elements of $R$, and $i = 0,1,2$ 
be such that $Rx + Ry = p^iR$.  
Then the element $M(x,y,z)\Z(G)$ of $G/\Z(G)$ lies in $(G/\Z(G))^{p^i}$.  
So its image $f(M(x,y,z)\Z(G))$ lies in
\[\Z(G)^{p^i} = M(0,0,p^iR) = M(0,0, Rx + Ry) = [M(x,y,z), G].\]
Thus $f(g) \in [g,G]$ for all $g \in G$, and $\alpha \in \Aut_c(G)$. Since
$\Aut_c(G) \le \Autcent(G)$, this proves that  $\Aut_c(G) = \Autcent(G)$.

\section{Groups $G$ with $\Aut(G) = \Aut_c(G)$}

In this section we prove Theorem C. Throughout the section, $p$ always denotes an odd prime.  We state some important  known results in the following theorem.

\begin{thm}\label{thmo}
The following statements hold true.
\begin{enumerate} 
\item {Let $G$ be a non-abelian  $p$-group of order $p^5$ or less. Then $\Aut(G)$ is non-abelian. \cite[Theorem 5.2]{bE75} }\label{thm3} 
\item {There is no group $G$ of order $p^6$ whose automorphism group is an abelian $p$-group. \cite[Proposition 1.4]{mM94}}\label{thm4} 
\item{Let $G$ be a non-abelian finite $p$-group such that $\Aut(G)$ is abelian. Then $d(G) \ge 4$. \cite{mM95}} \label{thm4a}
\item{Let $G$ be a  non-cyclic finite $p$-group, $p$ odd, for which $\Aut(G)$ is abelian. Then $p^{12}$ divides $|\Aut(G)|$. \cite[Main Theorem]{pH95} } \label{thm5}
\item{Let $G$ be a non-cyclic group of order $p^7$. If $\Aut(G)$ is abelian, then it must be of order $p^{12}$. \cite[Theorem 1]{BY98} }  \label{thm6}
\item {Let $G$ be a finite non-cyclic $p$-group such that $\Aut(G)$ is abelian. Then $\gamma_2(G) \cong \C_{p^{m}} \times  \C_{p^{m}}$ \text{or} $\C_{p^{m}} \times  \C_{p^{m}} \times \C_{p^{m_1}}  \times \cdots \times \C_{p^{m_k}}$, where $p^m$ is the exponent of $\gamma_2(G)$ and $m \ge m_i$ for $1 \le i \le k$. \cite[Lemma 4(12)]{BY98} }  \label{thm6a}
\item{Let $G$ be a finite Camina $p$-group of class $2$ such that $d(G) = n$ and $d(\gamma_2(G)) = m$ for some positive integers $n$ and $m$. Then $n$ is even and $n \ge 2m$. (Follows from 
\cite[Theorems 3.1, 3.2]{iM81})}\label{thm8}
\end{enumerate}
\end{thm}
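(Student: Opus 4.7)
Since each of the seven assertions in Theorem~\ref{thmo} is quoted from the literature with an explicit citation, the ``proof'' in the paper consists essentially of invoking those references. For the sake of this proposal I sketch, at a high level, the strategy one would take to reconstruct them. A thread running through (1)--(6) is the Adney--Yen correspondence (Theorem~\ref{lemma0}) together with a careful accounting of $\Autcent(G)$ via $\Hom(G/\gamma_2(G),\Z(G))$, combined with the basic observation that $\Aut(G)$ abelian forces $\Inn(G)\le\Z(\Aut(G))$, which imposes strong compatibility between the invariants of $G/\gamma_2(G)$ and those of $\Z(G)$.

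Items (1)--(3) are low-rank case analyses. For (1), each non-abelian $p$-group of order $\le p^5$ can be checked, using the known classification, to admit either a power-type outer automorphism not commuting with an inner one, or two non-commuting central automorphisms; in either case $\Aut(G)$ is non-abelian. For (2), a hypothetical $G$ of order $p^6$ with $\Aut(G)$ an abelian $p$-group forces $\Inn(G)\le\Z(\Aut(G))$ and $\Autcent(G)$ abelian, so the numerical bound $|\Aut(G)|\ge|\Inn(G)|\cdot|\Autcent(G)\Inn(G)/\Inn(G)|$ combined with the list of groups of order $p^6$ yields a contradiction in every case. Item (3) follows by showing that if $d(G)\le 3$ one can lift a suitable element of $\mathrm{GL}_{d(G)}(\mathbb{F}_p)$, acting non-trivially on $G/\Phi(G)$, to an automorphism of $G$ that fails to commute with some inner automorphism.

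Items (4)--(6) combine these constraints with a structural analysis of $\gamma_2(G)$. Item (6) is proved first: if the top cyclic factor of $\gamma_2(G)$ were unique, scaling its generator would yield an automorphism of $G$ not commuting with every central automorphism, contradicting $\Aut(G)$ abelian; hence the top factor must occur with multiplicity $\ge 2$. With $\gamma_2(G)$ constrained in this way, (4) follows by lower-bounding $|\Aut(G)|$ through $\Autcent(G)\le\Aut(G)$ and computing $|\Hom(G/\gamma_2(G),\Z(G))|$ explicitly from the invariant constraints, the minimum being $p^{12}$. Item (5) then reduces to observing that at $|G|=p^7$ the excess invariants of $\gamma_2(G)$ collapse, forcing equality in the $p^{12}$ bound.

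The step I would expect to be the main obstacle is (7). The plan is to view the commutator pairing $a_G: G/\Z(G)\times G/\Z(G)\to\gamma_2(G)$ as an alternating bilinear map and exploit the Camina hypothesis, which says that for every $\bar x\ne 0$ in $G/\gamma_2(G)$ the slice $a_G(\bar x,-)$ is surjective onto $\gamma_2(G)$. Passing to Frattini quotients yields a surjective alternating $\mathbb{F}_p$-pairing $\overline{a}: G/\Phi(G)\times G/\Phi(G)\to\gamma_2(G)/\Phi(\gamma_2(G))$ whose every nonzero slice is still surjective, so its left radical is zero; a radical-zero alternating pairing on an $n$-dimensional $\mathbb{F}_p$-space into an $m$-dimensional one forces $n$ even and $n\ge 2m$, since each dimension of the image requires a symplectic pair in the domain. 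The difficulty is the descent to $\mathbb{F}_p$: one must verify that the surjectivity of every slice persists modulo $\Phi(\gamma_2(G))$, and that higher-exponent behaviour in $G/\Z(G)$ does not weaken the bilinear-form count, which is where the structural results on Camina $p$-groups of class $2$ from \cite{iM81} come in.
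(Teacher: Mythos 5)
Your opening sentence is exactly right: the paper offers no proof of Theorem~\ref{thmo} at all --- it is a compendium of results quoted verbatim from the cited sources (Earnley, Morigi, Hegarty, Ban--Yu, Macdonald), and ``invoking those references'' is the entirety of the paper's treatment. On that main point your proposal matches the paper.

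Your supplementary sketches go beyond anything in the paper, so strictly they are not being compared against anything; but since you offered them, one deserves a warning. In item (7) you reduce the Camina condition (every nonzero slice of the commutator pairing is surjective onto $\gamma_2(G)/\Phi(\gamma_2(G))$) to the weaker condition that the pairing has zero radical, and then assert that a radical-zero alternating pairing from an $n$-dimensional $\mathbb{F}_p$-space to an $m$-dimensional one forces $n$ even and $n \ge 2m$. That assertion is false: the wedge pairing $(x,y) \mapsto x \wedge y$ on $\mathbb{F}_p^3$ with values in $\Lambda^2(\mathbb{F}_p^3) \cong \mathbb{F}_p^3$ is alternating with zero radical, yet $n = 3$ is odd and $n < 2m$. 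The full strength of the Camina hypothesis (every slice surjective, not merely nonzero) is essential, and Macdonald's actual arguments in \cite{iM81} are a counting analysis over $\GF(p)$ rather than a formal consequence of nondegeneracy. Since the paper leans entirely on the citation, this gap does not affect the paper, but your proposed reconstruction of (7) would not go through as written.
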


Now we start the proof of Theorem C.

\begin{lemma}\label{lemma12}
Let $G$ be a non-abelian finite $p$-group such that $\Aut(G) = \Aut_c(G)$, where $p$ is an odd prime. Then $\gamma_2(G)$ cannot be cyclic.
\end{lemma}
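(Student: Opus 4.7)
The plan is to argue by contradiction: assume $\gamma_2(G)$ is cyclic. The first task, and the main obstacle, is to reduce to the case when $G$ has nilpotency class $2$. Malinowska's class-$3$ examples satisfying $\Aut(G)=\Aut_c(G)$ show that the equality alone does not force class $2$, so the cyclicity of $\gamma_2(G)$ and the oddness of $p$ must be used essentially. I would exploit the fact that $\Aut_c(G)$ acts trivially on $G/\gamma_2(G)$ (because $\alpha(x)x^{-1}\in[x,G]\subseteq\gamma_2(G)$) and on $\gamma_2(G)/\gamma_3(G)$ (because $\alpha(c)c^{-1}=[c,g_c]\in\gamma_3(G)$ for $c\in\gamma_2(G)$); combined with $\Aut(G)=\Aut_c(G)$, this forces $\Aut(G)$ itself to act trivially on these characteristic cyclic sections. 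To get a contradiction in the class $\ge 3$ case I would try to lift the automorphism $c\mapsto c^{1+p}$ of $\gamma_2(G)/\gamma_2(G)^p$ (which has order $p^{n-1}$ in $(\mathbb Z/p^n)^{\ast}$ for $p$ odd) to an automorphism of $G$ lying outside $\Aut_c(G)$; the resulting contradiction forces $\gamma_3(G)=1$, i.e., class $2$.

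Once class $2$ is in hand, the rest is smooth. The chain $\Aut_c(G)\le\Autcent(G)\le\Aut(G)$ collapses to equality, so $G$ satisfies Hypothesis~A. Proposition~\ref{prop2} then gives $\gamma_2(G)=\Z(G)$, and Lemma~\ref{lemma1} yields that $\Autcent(G)=\Aut(G)$ is abelian.

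The final step is to invoke Theorem~\ref{thmo}(6): for any non-cyclic finite $p$-group $G$ with $\Aut(G)$ abelian, $\gamma_2(G)$ must contain two direct summands of order equal to the exponent of $\gamma_2(G)$, and therefore cannot be cyclic. Since $G$ is non-abelian and hence non-cyclic, this contradicts our standing assumption on $\gamma_2(G)$, completing the proof. The delicate part of the argument is thus the class-$2$ reduction; once that is secured, the conclusion is an elegant three-line consequence of Proposition~\ref{prop2}, Lemma~\ref{lemma1}, and the structure result Theorem~\ref{thmo}(6).
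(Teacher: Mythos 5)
Your argument has a genuine gap at its self-declared ``delicate part,'' the reduction to nilpotency class $2$. You write that you ``would try to lift'' the map $c\mapsto c^{1+p}$ on $\gamma_2(G)/\gamma_2(G)^p$ to an automorphism of $G$ outside $\Aut_c(G)$, but no such lift is constructed, and there is no reason one should exist: power maps on a characteristic subgroup do not in general extend to automorphisms of the whole group, and the observation that $\Aut_c(G)$ acts trivially on $G/\gamma_2(G)$ and on $\gamma_2(G)/\gamma_3(G)$ does not by itself produce any outer automorphism. Since Malinowska's examples show that $\Aut(G)=\Aut_c(G)$ is compatible with class $3$, the class~$\ge 3$ case cannot be waved away, and as it stands your proof only covers class $2$. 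The paper avoids the issue entirely: it quotes Cheng's theorem \cite[Theorem 3]{yC82}, which says that for $p$ odd a finite $p$-group with \emph{cyclic} commutator subgroup satisfies $\Aut_c(G)=\Inn(G)$ regardless of nilpotency class; combined with $\Aut(G)=\Aut_c(G)$ this gives $\Aut(G)=\Inn(G)$, contradicting Gasch\"utz's theorem that a non-abelian finite $p$-group has an outer $p$-automorphism. That is the whole proof, and it is where the hypotheses ``$\gamma_2(G)$ cyclic'' and ``$p$ odd'' are actually consumed.

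For what it is worth, the part of your argument that you do carry out is correct and is a genuinely different route for the class-$2$ case: from $\Aut_c(G)\le\Autcent(G)\le\Aut(G)$ you get Hypothesis~A, then $\gamma_2(G)=\Z(G)$ by Proposition~\ref{prop2}, then $\Aut(G)$ abelian by Lemma~\ref{lemma1}, and finally Theorem~\ref{thmo}(6) (Ban--Yu) forces $\gamma_2(G)$ to have at least two invariants, hence to be non-cyclic. This chain does not even need $p$ odd, but it buys nothing for class $\ge 3$, which is exactly the case your proof must also handle. To repair the proposal, either cite Cheng's theorem as the paper does, or supply an actual construction of a non-class-preserving automorphism when $\gamma_2(G)$ is cyclic and the class is at least $3$ --- the latter amounts to reproving a nontrivial result.
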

\begin{proof}
Suppose that $\gamma_2(G)$ is cyclic. It now follows from \cite[Theorem 3]{yC82} that $\Aut(G) = \Aut_c(G) = \Inn(G)$, which is not possible by a celebrated theorem of W. Gasch\"utz \cite{wG66}.
\hfill $\Box$

\end{proof}

\begin{lemma}\label{lemma13}
Let $G$ be a non-abelian finite $p$-group such that $\Aut(G) = \Aut_c(G)$. Then $\Aut(G)$ is elementary abelian if and only if $G$ is a Camina special $p$-group. 
\end{lemma}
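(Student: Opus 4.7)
My plan is to exploit the framework already developed in Sections 3 and 4. The hypothesis $\Aut(G) = \Aut_c(G)$, together with the standard containments $\Aut_c(G) \le \Autcent(G) \le \Aut(G)$, forces $\Aut_c(G) = \Autcent(G)$, so Hypothesis A holds. Hence, by Proposition \ref{prop2}, $\gamma_2(G) = \Z(G)$, $G$ is purely non-abelian, and (since $G$ is non-abelian) has nilpotency class exactly $2$.

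For the forward direction, I would first use that $\Aut(G)$ being elementary abelian forces $\Inn(G) \cong G/\Z(G)$ to have exponent $p$. The pivotal step is then the class-$2$ commutator identity $[x,y]^p = [x^p, y]$: since $x^p \in \Z(G)$, every commutator vanishes after raising to the $p$-th power, so $\gamma_2(G)$ is elementary abelian. Combined with $G/\gamma_2(G) = G/\Z(G)$ being elementary abelian, this gives $\Phi(G) = \gamma_2(G) = \Z(G)$ elementary abelian, i.e.\ $G$ is a special $p$-group. To see that $G$ is also Camina, I would invoke Corollary \ref{cor2}: for any $x \in G - \Phi(G)$, the order $p^m$ of $x\Z(G)$ satisfies $m = 1$, so $[x, G] = \Omega_1(\gamma_2(G)) = \gamma_2(G)$, as required.

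For the converse, assume $G$ is Camina special. Then $\Z(G) = \gamma_2(G) = \Phi(G)$ is elementary abelian, and $G$ is automatically purely non-abelian (any abelian direct factor would have to lie in $\Z(G) = \gamma_2(G)$, which is impossible for a nontrivial direct summand). Theorem \ref{lemma0} of Adney and Yen then identifies $\Aut(G) = \Aut_c(G) = \Autcent(G)$ with $\Hom(G/\gamma_2(G), \Z(G))$, and because both source and target are elementary abelian, this Hom-group --- and hence $\Aut(G)$ --- is elementary abelian.

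The only non-routine ingredient is the observation that the exponent-$p$ condition on $G/\Z(G)$ propagates to $\gamma_2(G)$ through the class-$2$ identity $[x,y]^p = [x^p,y] = 1$; all other parts of the argument are direct invocations of results already in the paper (Hypothesis A, Proposition \ref{prop2}, Corollary \ref{cor2}, and Theorem \ref{lemma0}). I do not foresee any genuine obstacle beyond verifying the bookkeeping in this key step.
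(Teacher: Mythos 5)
Your proof is correct and follows essentially the same route as the paper: reduce to Hypothesis A, deduce that $G$ is special from the exponent-$p$ condition on $G/\Z(G)$, and obtain the Camina property from Corollary \ref{cor2} --- which is exactly the content of Proposition \ref{prop5a}, the result the paper cites at this point instead of re-deriving it. The only step worth making explicit in your converse is that the Adney--Yen correspondence of Theorem \ref{lemma0} is stated only as a bijection; that it is a group isomorphism when $\Z(G)=\gamma_2(G)$ (the content of Lemma \ref{lemma1}, or a direct computation with $f_{\alpha\beta}=f_\alpha f_\beta$) is what lets you transfer ``elementary abelian'' from $\Hom(G/\gamma_2(G),\Z(G))$ to $\Aut(G)$.
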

\begin{proof}
Suppose that $\Aut(G) = \Aut_c(G)$ is elementary abelian. Then $G/\Z(G) \cong \Inn(G)$ is elementary abelian. Since   $\Z(G) = \gamma_2(G) \le \Phi(G)$, it follows that $G$ is a special $p$-group. Hence, by Proposition \ref{prop5a}, $G$ is a Camina special $p$-group.

Conversely, suppose that $G$ is a Camina $p$-group of class $2$.  Then $\Inn(G) \cong G/\Z(G)$ is elementary abelian. Hence it follows that $\Aut(G) = \Aut_c(G)$ is elementary abelian. \hfill $\Box$
\end{proof}

\begin{remark}
If $G$ is a special $p$-group and  $\Aut(G) = \Autcent(G)$, then it is not difficult to prove that $\Aut(G)$ is elementary abelian. But the converse is not true.  The examples of  non-special finite $p$-groups $G$ such that $\Aut(G) = \Autcent(G)$ is elementary abelian can be found in \cite{JRY}.
\end{remark}

\begin{prop}\label{prop6}
Let $G$ be a finite $p$-group of nilpotency class $2$ such that $\Aut(G) = \Aut_c(G)$, where $p$ is an odd prime. Then $|G| \ge p^8$.
\end{prop}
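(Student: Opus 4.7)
First, the hypothesis $\Aut(G)=\Aut_c(G)$, combined with the class-$2$ assumption (which gives $\Aut_c(G)\subseteq\Autcent(G)$), forces $\Aut(G)=\Autcent(G)=\Aut_c(G)$. Hence Hypothesis~A holds, so Proposition~\ref{prop2} yields $\Z(G)=\gamma_2(G)$ and Lemma~\ref{lemma1} makes $\Aut(G)$ abelian; in particular the parts of Theorem~\ref{thmo} that presuppose an abelian automorphism group apply to $G$. By Theorem~\ref{thmo}(\ref{thm4a}) and Theorem~B, $d(G)$ is even and $d(G)\ge 4$. Since $\Z(G)=\gamma_2(G)\le\Phi(G)$, we have $d(G/\Z(G))=d(G)\ge 4$, so $|G/\Z(G)|\ge p^4$, and Theorem~\ref{thmo}(\ref{thm6a}) gives $|\gamma_2(G)|\ge p^{2m}\ge p^2$, where $p^m$ is the exponent of $\gamma_2(G)$. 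Therefore $|G|\ge p^6$; Theorem~\ref{thmo}(\ref{thm4}) rules out equality, so $|G|\ge p^7$.

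It remains to exclude $|G|=p^7$. In that case $d(G)=4$, since $d(G)\ge 6$ would already give $|G/\Z(G)|\ge p^6$ and hence $|G|\ge p^8$. Then $G/\Z(G)$ has rank $4$ and order $p^4$ or $p^5$. If $|G/\Z(G)|=p^5$, the group $G/\Z(G)$ has exponent $p^2$; Lemma~\ref{lemma6a}(1) transfers this exponent to $\gamma_2(G)$, and Theorem~\ref{thmo}(\ref{thm6a}) then forces $|\gamma_2(G)|\ge p^4$, contradicting $|\gamma_2(G)|=|\Z(G)|=p^2$. If $|G/\Z(G)|=p^4$, then $G/\Z(G)$ is elementary abelian, so $\gamma_2(G)=\Z(G)$ is elementary abelian of order $p^3$, and $\Phi(G)=G^p\gamma_2(G)=\gamma_2(G)$; hence $G$ is a special $p$-group with $d(\gamma_2(G))=3$. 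Proposition~\ref{prop5a} then forces $G$ to be Camina, whence Theorem~\ref{thmo}(\ref{thm8}) yields $d(G)\ge 2d(\gamma_2(G))=6$, contradicting $d(G)=4$.

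The main obstacle is this last sub-case: one must extract enough structural information from Lemma~\ref{lemma6a} and Theorem~\ref{thmo}(\ref{thm6a}), together with the identity $\Phi(G)=\gamma_2(G)$, to recognise $G$ as a special $p$-group; only at that point can Proposition~\ref{prop5a} (special plus Hypothesis~A implies Camina) be brought in, after which the Camina rank inequality of Theorem~\ref{thmo}(\ref{thm8}) delivers the final contradiction. The cases $|G|\le p^6$, by contrast, are essentially immediate from the cited facts in Theorem~\ref{thmo}.
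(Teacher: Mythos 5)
Your proof is correct, and its skeleton is the same as the paper's: reduce to $|G|\ge p^7$, then kill the order-$p^7$ case by showing $G$ would have to be a Camina special $p$-group with $d(G)=4$ but $d(\gamma_2(G))=3$, contradicting the Camina rank inequality $d(G)\ge 2\,d(\gamma_2(G))$ of Theorem \ref{thmo}(\ref{thm8}). Where you genuinely diverge is in how the order-$p^7$ case is narrowed down to that special situation. The paper first invokes Ban--Yu (Theorem \ref{thmo}(\ref{thm6})) to get $|\Aut(G)|=p^{12}$, computes $|\Hom(G/\Z(G),\Z(G))|=p^{12}$, and then disposes of the remaining configurations with a rather terse ``by looking at various possibilities.'' You bypass the $|\Aut(G)|=p^{12}$ computation entirely: you use $d(G)=4$ (even, $\ge 4$, and $\ge 6$ is impossible once $|\gamma_2(G)|\ge p^2$ is known) to force $|G/\Z(G)|\in\{p^4,p^5\}$, and you eliminate $|G/\Z(G)|=p^5$ by transferring the exponent $p^2$ of $G/\Z(G)$ to $\gamma_2(G)$ via Lemma \ref{lemma6a}(1) and then applying the structural constraint of Theorem \ref{thmo}(\ref{thm6a}), which forces $|\gamma_2(G)|\ge p^4>p^2$. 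Your opening reduction also differs slightly: you get $|G|\ge p^6$ from $d(G)\ge 4$ together with $|\gamma_2(G)|\ge p^2$ rather than from Earnley's theorem (Theorem \ref{thmo}(\ref{thm3})), and then apply Morigi's $p^6$ result. The net effect is a proof that needs neither Theorem \ref{thmo}(\ref{thm3}) nor Theorem \ref{thmo}(\ref{thm5})--(\ref{thm6}) at this stage, and that makes explicit the case analysis the paper leaves to the reader; the paper's route, in exchange, records the useful intermediate fact $|\Aut(G)|=p^{12}$, which it reuses later in Proposition \ref{prop7}.
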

\begin{proof}
Since the nilpotency class of $G$ is $2$,  $\Aut(G) = \Aut_c(G)$ is abelian.  It  now follows from Theorem \ref{thmo}\eqref{thm3} and Theorem \ref{thmo}\eqref{thm4}  that $|G| \ge p^7$. Let $|G| = p^7$. Then $|\Aut(G)| = p^{12}$ (by Theorem \ref{thmo}\eqref{thm6}).  Now using the fact that $\Z(G) = \gamma_2(G)$ and $|\Aut(G)| = |\Autcent(G)| = |\Hom(G/\Z(G), \Z(G)| = p^{12}$, it follows from Lemma \ref{lemma12} and Theorems \ref{thm2},  \ref{thmo}\eqref{thm4a} and \ref{thmo}\eqref{thm5}   (by looking various possibilities for the order and structure of $\Z(G)$) that  $G$ is a special $p$-group with $|G/\Z(G)| = p^4$ and $|\Z(G)| = p^3$.  It then follows from Proposition \ref{prop5a} that $G$ is a Camina special $p$-group, which is not possible by Theorem \ref{thmo}\eqref{thm8}.  This completes the proof. \hfill $\Box$
\end{proof}

The following lemma seems basic.
\begin{lemma}\label{lemma14}
Let $G$ be a finite $p$-group of nilpotency class $2$ such that $\gamma_2(G) \cong \C_{p^m} \times \C_{p^m}$ for some positive integer $m$. Then $G/\Z(G) \cong  \C_{p^m} \times \C_{p^m} \times \C_{p^m} \times  H$ for some  abelian group $H$.
\end{lemma}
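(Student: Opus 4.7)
The plan is to examine the invariants of $\bar{G} = G/\Z(G)$ through the commutator map and show that at least three of them equal $p^m$. Let $p^{m_1} \ge p^{m_2} \ge \cdots \ge p^{m_d}$ be the invariants of $\bar{G}$, and lift a minimal basis of $\bar{G}$ to elements $x_1, \ldots, x_d \in G$ with $\bar{x}_i = x_i \Z(G)$ of order $p^{m_i}$. Because $G = \langle x_1, \ldots, x_d \rangle \Z(G)$ and $\Z(G)$ is central, we obtain $\gamma_2(G) = \langle [x_i, x_j] : 1 \le i < j \le d \rangle$. Moreover, for $i < j$, the relation $x_j^{p^{m_j}} \in \Z(G)$ forces $[x_i, x_j]^{p^{m_j}} = [x_i, x_j^{p^{m_j}}] = 1$, so $[x_i, x_j]$ has order dividing $p^{m_j}$. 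By Lemma \ref{lemma6a}(1) the exponent of $\bar{G}$ equals that of $\gamma_2(G)$, hence $p^{m_1} = p^m$.

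Next I would show $m_2 = m$. If instead $m_2 < m$, then every $[x_i, x_j]$ with $i < j$ has order at most $p^{m_j} \le p^{m_2} < p^m$, so $\gamma_2(G)$, generated by these commutators, has exponent less than $p^m$, contradicting $\gamma_2(G) \cong \C_{p^m} \times \C_{p^m}$. The crux is then $m_3 = m$. Suppose toward contradiction that $m_3 < m$. For every pair $i < j$ with $\{i,j\} \ne \{1,2\}$ we have $j \ge 3$, so $[x_i, x_j]$ has order dividing $p^{m_j} \le p^{m_3}$, and hence $[x_i, x_j]^{p^{m_3}} = 1$. Since $\gamma_2(G)$ is abelian, the subgroup of $p^{m_3}$-th powers satisfies
\[
\gamma_2(G)^{p^{m_3}} \;=\; \bigl\langle [x_i, x_j]^{p^{m_3}} : i < j \bigr\rangle \;=\; \bigl\langle [x_1, x_2]^{p^{m_3}} \bigr\rangle,
\]
which is cyclic. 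On the other hand, from $\gamma_2(G) \cong \C_{p^m} \times \C_{p^m}$ we see that $\gamma_2(G)^{p^{m_3}} \cong \C_{p^{m-m_3}} \times \C_{p^{m-m_3}}$ is noncyclic, a contradiction. Hence $m_3 = m$.

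With $m_1 = m_2 = m_3 = m$ established, writing $H = \C_{p^{m_4}} \times \cdots \times \C_{p^{m_d}}$ (taking $H$ trivial if $d = 3$) gives $G/\Z(G) \cong \C_{p^m} \times \C_{p^m} \times \C_{p^m} \times H$, as claimed. The main obstacle is the step $m_3 = m$: the simple exponent-counting argument that suffices for $m_2$ is not enough, and one must instead exploit the rank of the $p^{m_3}$-th power subgroup of $\gamma_2(G)$ against the fact that only a single commutator $[x_1, x_2]$ can have order exceeding $p^{m_3}$.
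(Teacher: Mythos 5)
Your proof is correct, and it reaches the conclusion by a genuinely different route in the decisive step. The paper fixes a decomposition $G/\Z(G) = \gen{\bar x_1} \times \gen{\bar x_2} \times K/\Z(G)$ with $|\gen{[x_1,x_2]}| = p^m$ and then shows the exponent of $K/\Z(G)$ is $p^m$ by taking a second generator $w$ of $\gamma_2(G)$, expanding it as a product of commutators, and arguing that if $K/\Z(G)$ had smaller exponent then every commutator entering $w$ besides $[x_1,x_2]$ would have order less than $p^m$, forcing $|\gamma_2(G)| < p^{2m}$. You instead work directly with the invariants $p^{m_1} \ge \cdots \ge p^{m_d}$ of $G/\Z(G)$, bound the order of each $[x_i,x_j]$ ($i<j$) by $p^{m_j}$, and then exploit the power subgroup: if $m_3 < m$, then $\gamma_2(G)^{p^{m_3}} = \gen{[x_1,x_2]^{p^{m_3}}}$ is cyclic, while $(\C_{p^m}\times\C_{p^m})^{p^{m_3}}$ is not. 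Both arguments rest on the same underlying fact --- only the single commutator $[x_1,x_2]$ can have order exceeding $p^{m_3}$, and one element cannot account for a rank-two homocyclic group --- but your formalization is the more transparent of the two: it avoids the paper's somewhat delicate manipulation of the auxiliary element $u$ (whose definition there involves an unexplained substitution of generators), and it sidesteps entirely the paper's unproved opening assertion that the basis can be chosen so that $[x_1,x_2]$ has full order $p^m$. The only sentence worth adding is the observation that $d \ge 3$, so that the invariant $p^{m_3}$ exists: if $d \le 2$ then $\gamma_2(G)$ is generated by the single commutator $[x_1,x_2]$ and is cyclic, contradicting $\gamma_2(G) \cong \C_{p^m} \times \C_{p^m}$.
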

\begin{proof}
Since the exponent of $\gamma_2(G)$ is $p^m$, we can write $G/\Z(G) =  \gen{{\bar x}_1} \times \gen{{\bar x}_2}  \times  K/\Z(G)$ for some subgroup $K$ of $G$ containing $\Z(G)$ such that $|\gen{[x_1, x_2]}| = |\gen{{\bar x}_1}| = |\gen{{\bar x}_2}| = p^m$.   We only need to prove that the exponent of $K/\Z(G)$ is $p^m$. 
Since  $d(\gamma_2(G)) = 2$, we can find an element $w \in \gamma_2(G)$ such that  $\gamma_2(G) = \gen{[x_1, x_2], w}$, where
\[w =   [x_1, x_2]^{a}[x_1, k]^{a_1}[x_2, k']^{a_2} \Pi_{k_i, k_j \in K}[k_i, k_j]^{b_{ij}}\]
for some $k, k', k_i, k_j \in K$.
Let  $u = [x_1, k_2]^{a_1}[k_1, x_2]^{a_2} \Pi_{k_i, k_j \in K}[k_i, k_j]^{b_{ij}}$. Then notice that $[x_1, x_2]$ and $u$ also generate $\gamma_2(G)$. If the exponent of  $K/\Z(G)$ is less than $p^m$, then order of the element $u$ is also less than $p^m$. Thus $|\gamma_2(G)| < p^{2m}$, which is a contradiction to the given fact that $\gamma_2(G) \cong \C_{p^m} \times \C_{p^m}$. Hence the exponent of $K/\Z(G)$ is $p^m$ and the proof of the lemma is complete. \hfill $\Box$
\end{proof}

\begin{prop}\label{prop7}
Let $G$ be a finite $p$-group such that $\Aut(G) = \Aut_c(G)$ is abelian. Then   $|\Aut(G)| = p^{12}$ if and only if $|G| = p^8$.
\end{prop}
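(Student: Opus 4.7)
The plan is to show that both hypotheses force $G/\Z(G)$ and $\Z(G)=\gamma_2(G)$ to be elementary abelian of ranks $6$ and $2$ respectively, after which the equivalence is immediate.

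For the setup: since $\Aut(G)=\Aut_c(G)$ is abelian, $\Inn(G)$ is abelian and $G$ has nilpotency class exactly $2$ (the abelian case is excluded because $p$ odd forces $\Aut(G)\ne 1$). Hence $\Aut(G)=\Aut_c(G)=\Autcent(G)$, Hypothesis A holds, Proposition~\ref{prop2} gives $\Z(G)=\gamma_2(G)$ with $G$ purely non-abelian, and Theorem~\ref{lemma0} then yields $|\Aut(G)|=|\Hom(G/\Z(G),\Z(G))|$. Write $d:=d(G)=d(G/\Z(G))$; this is even by Theorem~\ref{thm2} and at least $4$ by Theorem~\ref{thmo}\eqref{thm4a}. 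Let $p^m$ be the common exponent of $\gamma_2(G)$ and $G/\Z(G)$ (equal by Lemma~\ref{lemma6a}(1)).

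The core reduction is to $m=1$. By Theorem~\ref{thmo}\eqref{thm6a}, $\C_{p^m}\times \C_{p^m}$ is a direct factor of $\gamma_2(G)$, and by Lemma~\ref{lemma14}, $\C_{p^m}\times\C_{p^m}\times\C_{p^m}$ is a direct factor of $G/\Z(G)$. Hence $|G|\ge p^{5m}$ and $|\Aut(G)|\ge |\Hom(\C_{p^m}^3,\C_{p^m}^2)|=p^{6m}$. If $m\ge 2$, the first inequality excludes $|G|=p^8$, and the second forces $|\Aut(G)|\ge p^{12}$ with equality only if $G/\Z(G)\cong \C_{p^m}^3$, which would give $d=3$, contradicting Theorem~\ref{thm2}. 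Therefore $m=1$.

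With $m=1$, both $G/\Z(G)\cong \C_p^d$ and $\Z(G)=\gamma_2(G)\cong \C_p^r$ are elementary abelian with $r\ge 2$ by Theorem~\ref{thmo}\eqref{thm6a}; so $|G|=p^{d+r}$ and $|\Aut(G)|=p^{dr}$, and $\Aut(G)$ itself is elementary abelian. Lemma~\ref{lemma13} then identifies $G$ as a Camina special $p$-group, and Theorem~\ref{thmo}\eqref{thm8} gives $d\ge 2r$. A short enumeration of pairs $(d,r)$ with $d$ even, $d\ge 4$, $r\ge 2$, $d\ge 2r$ leaves $(d,r)=(6,2)$ as the only solution of $d+r=8$ (since $(4,4)$ violates $d\ge 2r$) and also as the only solution of $dr=12$ (since $(4,3)$ violates $d\ge 2r$ and $(12,1)$ violates $r\ge 2$). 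In either case $|G|=p^8$ and $|\Aut(G)|=p^{12}$, proving both directions.

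The principal obstacle is having the Camina bound $d\ge 2r$ available; without Theorem~\ref{thmo}\eqref{thm8} the borderline pairs $(4,3)$ and $(4,4)$ could not be excluded and the proposition would fail. Every other step is a direct application of results already established in the paper.
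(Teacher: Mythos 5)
Your architecture is sound, and your endgame is actually a nice streamlining of the paper's: once everything is elementary abelian you use Lemma \ref{lemma13} and the Camina inequality $d\ge 2r$ from Theorem \ref{thmo}\eqref{thm8} to kill both borderline pairs $(4,4)$ and $(4,3)$ at once, where the paper handles the converse direction by a separate case analysis on $|\Z(G)|$. However, there is one genuine gap, in the sentence ``by Lemma \ref{lemma14}, $\C_{p^m}\times\C_{p^m}\times\C_{p^m}$ is a direct factor of $G/\Z(G)$.'' Lemma \ref{lemma14} is stated, and proved, only under the hypothesis that $\gamma_2(G)$ is isomorphic to $\C_{p^m}\times\C_{p^m}$, i.e.\ that $d(\gamma_2(G))=2$; its proof relies essentially on $\gamma_2(G)$ being $2$-generated. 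Theorem \ref{thmo}\eqref{thm6a} gives you only that $\C_{p^m}\times\C_{p^m}$ is a \emph{direct factor} of $\gamma_2(G)$, which may have rank $3$ or more, so at the moment you invoke Lemma \ref{lemma14} its hypothesis is not known to hold. The gap is not cosmetic: for $m=2$ the bound you can extract without Lemma \ref{lemma14} from $d(G/\Z(G))\ge 4$ alone, namely $|\Hom(\C_{p^2}\times\C_p\times\C_p\times\C_p,\ \C_{p^2}\times\C_{p^2})|=p^{10}$, does not force $|\Aut(G)|>p^{12}$. The paper sidesteps this by first proving $d(\Z(G))=2$ directly from the hypothesis $|\Aut(G)|=p^{12}$ (if $d(\Z(G))\ge 3$ and $d(G/\Z(G))\ge 4$ then $|\Hom(G/\Z(G),\Z(G))|\ge p^{12}$, with equality only in a configuration of order $p^7$, which is excluded by Proposition \ref{prop6}), and only then combines Theorem \ref{thmo}\eqref{thm6a} with Lemma \ref{lemma14}.

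The repair fits your framework: if $m\ge 2$ and $d(\Z(G))\ge 3$, then $\Z(G)$ contains $\C_{p^m}\times\C_{p^m}\times\C_p$ and $G/\Z(G)$ contains $\C_{p^m}\times\C_p\times\C_p\times\C_p$, so $|G|\ge p^{3m+4}>p^8$ and $|\Aut(G)|\ge p^{2m+10}>p^{12}$; this eliminates the high-rank case in both directions and leaves you free to apply Lemma \ref{lemma14} when $d(\Z(G))=2$. With that case inserted, your proof goes through.
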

\begin{proof}
Let $|\Aut(G)| = p^{12}$.  By Proposition \ref{prop6}, we can assume that $|G| \ge p^8$. Also, by Theorem \ref{thm2}, $G/Z(G)$ is minimally generated by even number of elements.  Notice that
\[|\Hom(G/\Z(G), \Z(G))| = |\Autcent(G)| = |\Aut(G)| = p^{12}.\]
Since $\Z(G) = \gamma_2(G)$ can not be cyclic, a copy of $\C_p \times \C_p$ is sitting inside $\Z(G)$. Suppose that $d(\Z(G)) \ge 3$. Since $d(G/\Z(G)) \ge 4$ (by Theorem \ref{thmo}\eqref{thm4a}), we have 
$|\Aut(G)| \ge p^{12}$.  Notice that the equality holds only when $d(\Z(G)) = 3$,  $d(G/\Z(G)) = 4$  and the exponent of both $\Z(G)$ and $G/\Z(G)$ is $p$. This implies that $|G| = p^7$, which we are not considering.  Thus $d(\Z(G)) = 2$. Let the exponent of $\Z(G)$ is $p^m$ for some positive integer $m$. Then it follows from Theorem \ref{thmo}\eqref{thm6a}  that  $\Z(G) \cong \C_{p^m} \times \C_{p^m}$. We claim that $m = 1$.  Suppose that $m \ge 2$. If $d(G/\Z(G)) \ge 6$, then notice that $|\Aut(G)| > p^{12}$. Thus by Theorem  \ref{thm2}, $d(G/\Z(G)) = 4$ and by Lemma \ref{lemma14}, $G/\Z(G) \cong \C_{p^m} \times \C_{p^m} \times \C_{p^m} \times \C_{p^r}$ for some $1 \le r \le m$. Now it is easy to show that $|\Aut(G)| > p^{12}$. This contradiction proves our claim, i.e., $m=1$. Now the only choice for $d(G/\Z(G))$ to give $|\Aut(G)| = p^{12}$ is  $6$. Thus $G/\Z(G)$ and $\Z(G)$ are  elementary abelian of order $p^6$  and $p^2$ respectively.  Hence $|G| = p^8$.

Conversely, suppose that $|G| = p^8$ and $\Aut(G) = \Aut_c(G)$ is abelian. Then $|\Z(G)| = |\gamma_2(G)| \le p^4$. If $|\Z(G)| = p^4$, then $|G/\Z(G)| = p^4$ and therefore $G/\Z(G)$ is elementary abelian by Theorem \ref{thmo}\eqref{thm4a}. Hence $\Phi(G) = \Z(G) = \gamma_2(G)$ is elementray abelian. This shows that $G$ is special and therefore Camina special. But this is not possible by  Theorem \ref{thmo}\eqref{thm8}. If $|\Z(G)| = p^3$, then by Theorem \ref{thm2} and Theorem \ref{thmo}\eqref{thm4a}, $G/\Z(G)$ can be written as
\[G/\Z(G) \cong \C_{p^2} \times \C_p \times \C_p \times \C_p\]
and by Lemma \ref{lemma12}, $\Z(G)$ can be written as
\[\Z(G) \cong \C_{p^2} \times \C_p.\]
Thus $|\Aut(G)| = |\Autcent(G)| = |\Hom(G/\Z(G), \Z(G))| = p^9$, which is not possible by Theorem \ref{thmo}\eqref{thm6}. Hence $|\Z(G)| = p^2$. Now $\Z(G)$, being non-cyclic  by Lemma \ref{lemma12},  must be isomorphic to $\C_p \times \C_p$. Since the nilpotency class of $G$ is $2$ and $\Z(G) = \gamma_2(G)$,  $G/\Z(G)$ must be elementary abelian of order $p^6$. Hence $|\Aut(G)| = |\Autcent(G)| = |\Hom(G/\Z(G), \Z(G))| = p^{12}$.  \hfill $\Box$
\end{proof}

It only remains to establish the final thread of Theorem C.  For an odd prime $p$, consider the group

\begin{eqnarray}\label{eqnl}
G &=& \langle x_1,\; x_2,\; x_3,\;x_4,\; x_5,\; x_6 \mid x_1^{p^2}= x_2^{p^2}= x_3^p = x_4^p = x_5^p  = x_6^p = 1,\\
 & &[x_1,x_2] = x_1^p,\; [x_1,x_3] = x_2^p, \; [x_2, x_3] = x_1^p, \; [x_1, x_4] = x_2^p,\; [x_2,x_4] = x_2^p,\nonumber\\
& & [x_3,x_4] = x_2^p,\; [x_1,x_5] = x_2^p,\; [x_2,x_5] = x_1^p,\; [x_3,x_5] = x_2^p, \; [x_4,x_5] = x_1^p,\nonumber\\
& & [x_1,x_6] = x_2^p,\; [x_2,x_6] = x_2^p,\; [x_3,x_6] = x_1^p, \; [x_4,x_6] = x_1^p,\; [x_5,x_6] = x_2^p \rangle \nonumber
\end{eqnarray}

The following lemma completes the proof of Theorem C.
\begin{lemma}
The group $G$, defined in \eqref{eqnl}, is a  special $p$-group of order $p^8$ with $|\Z(G)| = p^2$ for all odd primes $p$. For $p = 3$,   $G$ is a Camina group and $\Aut(G) = \Aut_c(G)$ is elementary abelian of order $p^{12}$.
\end{lemma}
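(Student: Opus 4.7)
The plan is to treat the three claims in order, extracting as much as possible from the presentation before invoking data specific to $p=3$.

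\textbf{Structural properties for all odd $p$.} I would first note that every defining commutator lies in $H := \langle x_1^p, x_2^p \rangle$, hence $\gamma_2(G) \subseteq H$, and $H$ is central, so $G$ has class at most $2$. The relations $[x_1,x_2] = x_1^p$ and $[x_1,x_3] = x_2^p$ realize both generators of $H$ as commutators, so $\gamma_2(G) = H$. Since the relations $x_i^p = 1$ for $i \ge 3$ give $G^p \subseteq H$, we have $\Phi(G) = \gamma_2(G) = H$, so $d(G) \le 6$. To verify $d(G) = 6$, $|H| = p^2$, and $\Z(G) = \gamma_2(G)$ simultaneously, I would consider the two skew-symmetric bilinear forms $B_1, B_2$ on $\mathbb{F}_p^6$ whose values at $(\bar{x}_i, \bar{x}_j)$ are the $x_1^p$- and $x_2^p$-coefficients of $[x_i, x_j]$. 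A row-by-row inspection of $B_1$ alone shows $\ker B_1 = 0$: the equation $B_1 v = 0$ successively forces each coordinate of $v$ to vanish. This simultaneously yields $d(G) = 6$, $\gamma_2(G) \cong \C_p \times \C_p$, and $\Z(G) = \gamma_2(G)$, so $G$ is special of order $p^8$ with $|\Z(G)| = p^2$.

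\textbf{The Camina property at $p = 3$.} Since $G$ is special, by Proposition~\ref{prop5a} it suffices to verify $[g, G] = \gamma_2(G)$ for every $g \in G \setminus \Phi(G)$. Equivalently, for each non-zero $\bar{g} \in \mathbb{F}_3^6$ the linear map $\bar{x} \mapsto (B_1(\bar{g}, \bar{x}), B_2(\bar{g}, \bar{x}))$ from $\mathbb{F}_3^6$ to $\mathbb{F}_3^2$ should be surjective. Dually, this amounts to showing that $c_1 B_1 + c_2 B_2 \in M_6(\mathbb{F}_3)$ is non-singular for every non-zero $(c_1, c_2) \in \mathbb{F}_3^2$. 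Up to scalars there are only four matrices to inspect, and a short Pfaffian calculation over $\mathbb{F}_3$ handles each (for instance $\operatorname{Pf}(B_1) = \operatorname{Pf}(B_2) = \operatorname{Pf}(B_1 + B_2) = 1$, and $\operatorname{Pf}(B_1 - B_2) \equiv 1 \pmod 3$).

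\textbf{The automorphism group at $p = 3$.} Since $G$ is Camina special, Proposition~\ref{prop5a} gives Hypothesis~A, hence $\Aut_c(G) = \Autcent(G) \cong \Hom(G/\Z(G), \Z(G))$, which is elementary abelian of order $3^{12}$. The main obstacle is to prove $\Aut(G) = \Autcent(G)$: equivalently, the homomorphism $\Aut(G) \to \operatorname{GL}_6(\mathbb{F}_3) \times \operatorname{GL}_2(\mathbb{F}_3)$ sending $\alpha$ to its induced pair $(A, B)$ on $G/\Z(G)$ and $\Z(G)$ has trivial image. Two constraints pin $(A, B)$ down. First, since $p = 3$ is odd and $\gamma_2(G)$ has exponent $p$, the $p$-th power map is a homomorphism on $G$; combined with $x_i^p = 1$ for $i \ge 3$ and the fact that $x_1^p, x_2^p$ are independent in $\gamma_2(G)$, this forces $A$ into block lower-triangular form with blocks $A_1 \in \operatorname{GL}_2(\mathbb{F}_3)$, $A_3 \in \operatorname{GL}_4(\mathbb{F}_3)$, and further determines $B = A_1$. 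Second, commutator-equivariance yields the equations $A^T B_\mu A = b_{\mu 1} B_1 + b_{\mu 2} B_2$ for $\mu = 1, 2$. Solving this system over $\mathbb{F}_3$ with the explicit forms $B_1, B_2$ and the block structure of $A$ leaves only $(A, B) = (I, I)$; this final linear-algebra verification is tractable but tedious and is readily confirmed by computer algebra. The conclusion $\Aut(G) = \Aut_c(G) = \Autcent(G)$, elementary abelian of order $3^{12}$, then follows.
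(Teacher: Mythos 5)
Your proposal is correct, and it takes a genuinely different route from the paper: the paper's proof is essentially a citation of machine computation (it asserts that the special-group structure is ``fairly easy to show'' and then invokes GAP to count $737$ conjugacy classes and to compute $\Aut(G)$ for $p=3$), whereas you replace almost all of this with explicit linear algebra over $\mathbb{F}_p$. Encoding the commutator structure in the pair of alternating forms $B_1,B_2$ on $\mathbb{F}_p^6$ is exactly the right move: nondegeneracy of $B_1$ alone (your row reduction is valid over every $\mathbb{F}_p$) gives $\Z(G)=\gamma_2(G)=\Phi(G)$, and your reformulation of the Camina condition --- that $c_1B_1+c_2B_2$ be nonsingular for every nonzero $(c_1,c_2)\in\mathbb{F}_3^2$ --- is the correct dual statement and reduces the conjugacy-class count to four determinant checks, each of which does come out nonzero mod $3$. (One small inaccuracy: the parenthetical integer values are not all right, e.g.\ $B_2$ is singular mod $2$, so $\operatorname{Pf}(B_2)$ is an even integer; but only the values mod $3$ matter and those are units, so nothing is affected.) Your treatment of $\Aut(G)$ via the induced pair $(A,B)$, the $p$-th power map forcing the block structure and $B=A_1$, and the equivariance equations $A^{T}B_\mu A=b_{\mu 1}B_1+b_{\mu 2}B_2$ is the standard and correct framework; note that you, like the paper, ultimately defer the decisive step (that $(I,I)$ is the only solution) to machine verification, so on that one point the two proofs are at the same level of rigor. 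Two things both treatments leave implicit and that you might flag: first, nondegeneracy of the forms does not by itself rule out collapse of the presentation, so one should still exhibit a group of order $p^8$ realizing it (e.g.\ as an explicit central extension of $\mathbb{F}_p^6$ by $\mathbb{F}_p^2$ with the given power and commutator maps); second, your appeal to Proposition~\ref{prop5a} in the Camina step is harmless but unnecessary there --- since $\Phi(G)=\gamma_2(G)$, checking $[g,G]=\gamma_2(G)$ for $g\notin\Phi(G)$ is just the definition of a Camina group --- while the genuine use of Proposition~\ref{prop5a} is in the final step, to pass from the Camina special property to Hypothesis~A and hence to $\Aut_c(G)=\Autcent(G)\cong\Hom(G/\Z(G),\Z(G))$ of order $3^{12}$.
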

\begin{proof}
It is fairly easy to show that $G$ is a  special $p$-group of order $p^8$ with $|\Z(G)| = p^2$. Then by Theorem \ref{lemma0}, it follows that $|\Autcent(G)| = p^{12}$.   For $p = 3$,  using GAP \cite{gap} one can easily establish (i) $G$ has $737$ conjugacy classes and therefore it is a Camina group;  (ii)  $\Aut(G)$ is elementary abelian of order $p^{12}$.  Thus, by Proposition \ref{prop5a}, $G$ satisfies Hypothesis A. Hence $|\Aut(G)| = |\Aut_c(G)| = p^{12}$.  \hfill $\Box$
\end{proof}

We conclude this section with some questions. A finite abelian group $Y$ of exponent $e$ is said to be \emph{homocyclic}  if the set of its invariants is $\{e\}$. \\

\noindent{\bf Question 1.}  Let $G$ be a finite $p$-group  of class $2$ such that $\Aut(G) = \Aut_c(G)$. Is it true that $d(G) \ge 2 d(\gamma_2(G))$? If the answer is negative, what is the relationship between $d(G)$ and $d(\gamma_2(G))$?\\

\noindent{\bf Question 2.}  Let $G$ be a finite $p$-group  of class $2$ such that $\Aut(G) = \Aut_c(G)$ and $G/\Z(G)$ is homocyclic. Is  $\gamma_2(G)$ homocyclic? If not, how big homocyclic group of the highest exponent, $\gamma_2(G)$ contains?\\

\noindent{\bf Acknowledgements.} I thank Prof. Everett C. Dade for some useful e-mail discussion during the year 2008. Example 2 above is due to him.

\end{document}